\documentclass[a4paper,10pt]{article}



\usepackage[utf8]{inputenc}
\usepackage[T1]{fontenc}

\usepackage[english]{babel}


\usepackage[inline]{enumitem} 

\usepackage{authblk} 

\usepackage{cancel}
\usepackage{caption}
\usepackage{subcaption}
\usepackage{tabularx} 


\usepackage{geometry}


\usepackage{amsmath,amssymb,amsfonts,amsthm,amsbsy,stmaryrd,dsfont,bm}
\usepackage{bbold} 
\usepackage{nccmath} 
\usepackage{units}   


\usepackage{graphicx}
\usepackage{xcolor}

\usepackage{hyperref}


\usepackage[capitalise]{cleveref} 

\newlist{steps}{enumerate}{1}
\setlist[steps, 1]{wide=0pt, label=Step \arabic*., font=\scshape}
\newcommand{\jsize}{\xi} 
\newcommand{\eps}{\varepsilon}
\DeclareMathOperator{\sgn}{sgn}
\DeclareMathOperator{\Cov}{Cov}
\newcommand{\dd}{\mathop{}\!\mathrm{d}}
\newcommand{\abs}[1]{\left\lvert#1\right\rvert}
\newcommand{\absd}[1]{\left\|#1\right\|}
\renewcommand{\textbf}[1]{\begingroup\bfseries\mathversion{bold}#1\endgroup} 
\newcommand{\citer}[2][]{#1 in \cite{#2}} 
\newcommand{\comment}[1]{}


\theoremstyle {definition} 

\theoremstyle{plain} \newtheorem {theo} {Theorem}[section]

\newtheorem{coro}[theo]{Corollary}
\newtheorem{prop}[theo]{Proposition}
\newtheorem{lem}[theo]{Lemma}
\newtheorem{remark}[theo]{Remark}

\begin{document}
	
	\title{Scaling limit of a kinetic inhomogeneous stochastic system in the quadratic potential}
	\author{
		Thomas Cavallazzi and Emeline Luirard
	}
	\affil{\small Univ Rennes, CNRS, IRMAR - UMR 6625, F-35000 Rennes, France\\
		\texttt{\small \{Thomas.Cavallazzi,Emeline.Luirard\}@univ-rennes1.fr}}
	\date{%
	}
	\maketitle
	
	{\small\noindent {\bf Abstract:}~We consider a particle evolving in the quadratic potential and subject to a time-inhomogeneous frictional force and to a random force. The couple of its velocity and position is solution to a stochastic differential equation driven by a symmetric $\alpha$-stable Lévy process with $\alpha \in (1,2]$ and the frictional force is of the form $t^{-\beta}\sgn(v)|v|^\gamma$. We identify three regimes for the behavior in long-time of the couple velocity-position with a suitable rescaling, depending on the balance between the frictional force and the index of stability $\alpha$ of the noise.}\\
	{\small\noindent{\bf Keywords:}~kinetic stochastic equation; damping Hamiltonian system; time-inhomogeneous stochastic differential equation; Lévy process; scaling transformation; asymptotic distribution; asymptotic expansion of solutions to ordinary differential equations.}\\
	{\small\noindent{\bf MSC2020 Subject Classification:}~Primary~60H10; Secondary~60F17; 60G52; 60G18; 60J65; 34E05.}
\\

\section{Introduction and main results}

\subsection{Model and motivations}

In this paper, we study the long-time behavior of a stochastic system modelling a particle, with velocity $V\in \mathbb{R}$ and position $X\in \mathbb{R}$. The particle evolves in the quadratic potential $\mathcal{U}:x\mapsto\frac{x^2}{2}$, and is subject to a time-inhomogeneous frictional force $b$ and to a random force $L$. The dynamics of the particle is described by the following stochastic damping Hamiltonian system driven by an $\alpha$-stable process $L$ with $\alpha\in(1,2]$
\begin{equation}\label{SDE}
	\begin{cases}
		\dd V_t= \dd L_t -b(t,V_t)\dd t - \nabla \mathcal{U}(X_t) \dd t, \\
		\dd X_t= V_t\dd t,                                   \\ (V_{t_0},X_{t_0})=(v_{0},x_{0}), \quad t_0>0 \text{ being fixed.}
	\end{cases}
\end{equation}

The driving process $L$ models a random force coming from the interaction of the particle with its environment represented by a surrounding heat bath. As in the classical Langevin model (see \cite{Langevintheoriemouvementbrownien1908}), $L$ can be a Brownian motion denoted by $B$ in the sequel. It corresponds to take $\alpha =2$. It is natural to consider other types of noises such as Lévy processes, which are also largely used to model physical and biological systems (Lévy flights and anomalous diffusion), see e.g. \cite{Mann_fractal_fractional_diffusions} for the physical point of view, \cite{DitlevsenObservationastablenoise1999} in stochastic climate dynamics, and \cite{Jourdain_fractional_diffusion} for the mathematical point of view. The case where $L$ is an $\alpha$-stable process is of particular interest. It is a generalization of the Brownian motion with jumps since it satisfies that for any $c>0$, $(c^{\frac{1}{\alpha}}L_{t/c})_t$ has the same distribution as $L$ (self-similarity property).\\

\medskip
Degenerate systems like \eqref{SDE} have been intensively studied for several years. In particular, the existence and uniqueness of solutions to degenerate SDEs have been discussed in many works. These models are called degenerate because the noise is only present in one component of the system but can be transferred into others by drift terms. The well-posedness of these systems, when their deterministic version is ill-posed, can be proved by taking advantage of the regularizing effect of the noise and of its propagation through the whole system. The case of Brownian degenerate SDEs has been of course wildly explored, see e.g. \cite{FedrizziRegularitystochastickinetic2017},  \cite{WangDegenerateSDEsHilbert2015}, \cite{ZhangStochasticHamiltonianflows2016}, \cite{RaynalStrongexistenceuniqueness2017}, \cite{HonoreStrongregularizationBrownian2018} and references therein. The time-dependence is treated in the last four cited papers. The case of a Lévy driving process is more recent, see e.g. \cite{ZhangDensitiesSDEsDriven2014} in a time-homogeneous setting, and \cite{MarinoWeakwellposednessdegenerate2021} for drifts depending on time.\\

From another point of view, stochastic Hamiltonian systems, as \eqref{SDE} with $b=0$, have been widely studied. An interesting problem is to understand their asymptotic behaviors. The Hamiltonian process associated with this system is defined, for $t\geq t_0$, by $H_t:= \frac{1}{2}|V_t|^2 + \mathcal{U}(X_t)$. For example, the long-time dynamics of the Hamiltonian process under a suitable rescaling is studied in \cite{AlbeverioLongtimebehavior1994}. The case of time-homogeneous damping Hamiltonian systems is tackled in \cite{WuLargemoderatedeviations2001} (see also references therein).\\

The long-time behavior of a particle evolving in a free potential, i.e.\ $\mathcal{U} = 0$, has already been studied, see e.g.\ \cite{GradinaruExistenceasymptoticbehaviour2013}, \cite{FournierOnedimensionalcritical2021}, \cite{GradinaruAsymptoticbehaviortimeinhomogeneous2021a},
\cite{GradinaruAsymptoticbehaviortimeinhomogeneous2021} and references therein. In this case, The velocity process can be studied independently on the position process. Even in the time-homogeneous case, various asymptotic behaviors can appear. Whenever the random force is supposed to be Brownian, a particular non-linear Langevin's type SDE was studied in \cite{FournierOnedimensionalcritical2021}: 
\[V_t=v_0+B_t-\dfrac{\rho}{2}\int_0^t\dfrac{V_s}{1+V_s^2}\dd s \quad \mbox{and}\quad X_t=x_0+\int_0^tV_s\dd s. \]
In that case, the frictional force asymptotically behaves as $-\dfrac{\rho}{v}$, which induces the velocity process to "behave", far away from zero, like a (signed) Bessel process of dimension $1-\rho$. Various asymptotic behaviors of the position process appear, depending on the moment order of Bessel excursion area (which depends itself on the value of $\rho$). More precisely, when $\rho\geq 5$, the moment is of order $2$, hence, using a suitable rescaling, the authors show that the position process behaves asymptotically as a Brownian motion. An $\alpha$-stable process appears as limiting dynamics when $\rho\in [1,5)$. The index of stability $\alpha$ is a function of $\rho$, which interpolates the power of the rescaling from $\frac{1}{2}$ (Brownian motion) to $\frac{3}{2}$ (integrated Bessel process). This last behavior occurs when $\rho \in(0,1)$. However, the tools used in \cite{FournierOnedimensionalcritical2021}, such as invariant measure, scale function and speed measure, are limited to time-homogeneous coefficients.

\medskip
In \cite{GradinaruExistenceasymptoticbehaviour2013}, \cite{GradinaruAsymptoticbehaviortimeinhomogeneous2021a} and \cite{GradinaruAsymptoticbehaviortimeinhomogeneous2021}, the drift coefficient $b$ is allowed to depend on time under an homogeneity condition. More precisely, the following system is considered
\begin{equation*}
	\begin{cases}
		\dd V_t=\dd L_t - \rho \dfrac{\sgn(V_t)\abs{V_t}^{\gamma}}{t^{\beta}}\dd t,\\
		\dd X_t= V_t \dd t.
	\end{cases}
\end{equation*} 
The frictional force is time-inhomogeneous, depending on non-negative parameters $\beta$, $\gamma$ and $\rho$. When the particle moves slowly, classical mechanics ensures that the frictional force is linear, i.e.\ $\gamma =1$. Whereas in the turbulent regime, when the particle moves faster, thanks to fluid dynamics, the frictional force depends quadratically on the velocity, i.e.\ $\gamma =2$. That is why in a broader framework, we assume that the frictional force has a space component of the form $v\mapsto -\rho\sgn(v)\abs{v}^{\gamma}$. Moreover, the frictional force can depend on time through a friction coefficient $t\mapsto \rho_t$. For a particle evolving in a fluid, it can be the case for example when the viscosity of the fluid or the geometry of the particle change with time. For this reason, a time dependence is added to the function $b$ in \cite{GradinaruExistenceasymptoticbehaviour2013}, \cite{GradinaruAsymptoticbehaviortimeinhomogeneous2021a} and \cite{GradinaruAsymptoticbehaviortimeinhomogeneous2021}. In these works, it is assumed that $\rho_t = \frac{\rho}{t^{\beta}}$. The main goal behind the study of this model is to understand the competition between the frictional force, which tends to immobilize the system and the random force perturbing it. Notice that, by the self-similarity property satisfied by $L$, $\mathbb{E}\left[ |L_t|\right]$ is proportional to $t^{\frac{1}{\alpha}}$. This shows that the noise $L_t$ acts with a typical scale $t^{\frac{1}{\alpha}}$ and thus, when $\alpha$ decreases, it perturbs the velocity with higher typical values. The interest of the works mentioned above is to study the long-time behavior of the system through the prism of the competition between these two opposite actions.\\ 

Let us mention two relevant examples in the Brownian case before explaining the results obtained in \cite{GradinaruAsymptoticbehaviortimeinhomogeneous2021a,GradinaruAsymptoticbehaviortimeinhomogeneous2021}. When $\beta=0$, the friction coefficient does not decrease with time. By ergodicity, the velocity converges towards its invariant distribution and thus, the rescaled position process $(\eps^{\frac12}X_{t/\eps})_t$ behaves as a Brownian motion as $\eps$ tends to $0$. When "$\beta=+ \infty$", i.e.\ when there is no frictional force, the rescaled velocity-position process $(\eps^{\frac12}V_{t/\eps},\eps^{\frac32}X_{t/\eps})_t$ converges in distribution towards $\left(B_t, \int_0^t B_s \dd s \right)_t.$ When $\beta >0$, the frictional force is evanescent: it slows down the system but less and less efficiently as time increases and we expect a transition between the two extreme cases mentioned above, both on the limiting processes and on the rescaling.\\

In \cite{GradinaruExistenceasymptoticbehaviour2013}, the authors study the convergence in distribution, when $t$ tends to $+\infty$, of $r_t V_t$, for a certain rate of convergence $r_t$ in the case where $L$ is a Brownian motion. In \cite{GradinaruAsymptoticbehaviortimeinhomogeneous2021a}, the authors extend the results obtained in \cite{GradinaruExistenceasymptoticbehaviour2013} to the whole process given by the couple velocity-position. Namely, the authors study the limit in distribution of the rescaled process $(r_{\eps,V} V_{t/\eps}, r_{\eps,X} X_{t/\eps})_t$ for two appropriate rates of convergence $r_{\eps,V}$ and $r_{\eps,X}$. Results were further generalized in \cite{GradinaruAsymptoticbehaviortimeinhomogeneous2021} to an $\alpha$-stable driving process. To be more precise, the authors highlight three regimes, depending on the balance between $\beta$, $\gamma$ and $\alpha$, the index of stability of $L$. \begin{itemize} \item Whenever the frictional force is sufficiently "small at infinity", i.e. if $\beta$ is large enough, the rescaled process behaves as if there was no frictional force. It thus converges in distribution towards the Kolmogorov process $(L, \int_0^\cdot L)$, as in the particular case "$\beta = + \infty$" mentioned above in the Brownian case and with the same rescaling $(r_{\eps,V},r_{\eps,X}) = (\eps^{\frac{1}{\alpha}}, \eps^{1 + \frac{1}{\alpha}})$. \item When the two forces offset, the rescaling remains the same as in the preceding regime and the limiting process is still of kinetic form $(\mathcal{V},\int_0^\cdot\mathcal{V})$, but the process $\mathcal{V}$ is henceforth ergodic. \item  Whereas, when the drag force swings with the random process, i.e.\ when $\beta$ is small enough, the limiting process is no longer kinetic and the rescaling is not the same as in the two preceding regimes. The rescaled velocity process converges in finite dimensional distributions towards a white noise. Here, the asymptotic behavior is somehow an interpolation between the two extreme cases $\beta =0$ and "$\beta = +\infty$", which is explained by the slow decrease of the frictional force with time. \end{itemize}
The proofs are essentially based on the self-similarity of the driving process and on moment estimates of the velocity process. \\

\bigskip
In this paper, we are interested in the long-time behavior of the solution to the following system of SDEs, defined on the time interval $[t_0,+\infty)$, where $t_0>0$ and $x_0,v_0 \in \mathbb{R}$ are fixed
\begin{equation}\label{eq: confined}\tag{SKE}
	\begin{cases}
		\dd V_t= \dd L_t -\sgn(V_t)\dfrac{\abs{V_t}^{\gamma}}{t^{\beta}}\dd t - X_t \dd t, \\
		\dd X_t= V_t\dd t,                                                                 \\ (V_{t_0},X_{t_0})=(v_{0},x_{0}),
	\end{cases}
\end{equation}
Here $\gamma,\beta>0$ and $L$ is a symmetric $\alpha$-stable process on $\mathbb{R}$ with $\alpha \in (1,2]$. More precisely, our goal is to study the asymptotic behavior, as $\eps \to 0$, of the rescaled velocity-position process \[(Z^{(\eps)}_t)_{t} := \left(r_{\eps}\begin{pmatrix}X_{t/\eps}\\ V_{t / \eps}\end{pmatrix}\right)_t,\] for an appropriate rate of convergence $r_{\eps}$. Our first motivation is to study how the presence of the quadratic potential influences the results obtained in \cite{GradinaruAsymptoticbehaviortimeinhomogeneous2021a,GradinaruAsymptoticbehaviortimeinhomogeneous2021} through a confining effect on the position $X$. Indeed, the confining effect is here related to the position of the particle and does not disappear asymptotically contrary to the frictional force. It has thus an effect both on the limiting processes and on the rescaling. Here, it is a competition of the quadratic potential and the frictional force, which confines and slows down the system, against the noise which perturbs it.\\

Notice that our system without noise and frictional force is nothing else than the classical harmonic oscillator \[\begin{cases}
	v'_t=-x_t,
	\\ x'_t=v_t.
\end{cases}\] The intrinsic oscillatory behavior induced by the quadratic potential prevents the rescaled process $Z^{(\eps)}$ from converging as a process. However, we prove that each of its one-dimensional marginal distributions converges. In order to obtain the convergence of the whole process, the key idea is to remove the oscillations present in the system. Namely, we set for $t \geq t_0$ \[ \Theta_{t} := \begin{pmatrix}
	\cos(t) & \sin(t) \\ -\sin(t) & \cos(t)
\end{pmatrix} \quad \text{and} \quad Y_t:= \Theta_{t}^{-1} \begin{pmatrix}
	X_t\\ V_t
\end{pmatrix},\] where $\Theta_{t}$ is the rotation on $\mathbb{R}^2$ of angle $-t$ and we study the behavior of $(r_{\eps} Y_{t/\eps})_t$ as $\eps$ tends to $0$, for a certain rate of convergence $r_{\eps}$.

\subsection{Notations, main results and comments} 
Let us first introduce some notations used throughout the paper. For simplicity, we shall write $\mathcal{C}$ and $\mathcal{D}$ respectively for $\mathcal{C}((0,+\infty), \mathbb{R}^2)$, the space of continuous functions defined on $(0,+\infty)$ and $\mathcal{D}((0,+\infty),\mathbb{R}^2)$, the Skorokhod space of functions defined on $(0,+\infty)$ which are càdlàg on every compact subinterval of $(0,+\infty)$. For $x,y \in \mathbb{R}^2$, $\absd{x}$ represents the Euclidean norm of $x$, and $x\cdot y$ the inner product of $x$ and $y$. If $x\in\mathbb{R}^2$, for each $i\in \{{1,2}\}$, $x^{(i)}$ denotes its $i$-th component. The minimum between two reals is denoted by $\wedge$. We call $I_2$ the identity matrix of dimension 2 and $A^T$ is the transpose matrix of a matrix $A$. Finally, we denote by $C$ some positive constant, which may change from line to line, and we use subscripts to indicate the parameters on which it depends when it is necessary. For the sake of simplicity, we denote by $C_{t_0}$ a positive constant depending only on $t_0,x_0$ and $v_0$, which are fixed throughout the paper. \\

We can now state our results. The following theorem deals with convergences in distribution in the space ${\mathcal{C}}$ endowed with topology of uniform convergence on every compact set of $(0,+\infty)$.
\begin{theo}[Brownian case i.e.\ $\alpha=2$]\label{ThmlongtimeY_mB} Define $q:= \frac{\beta}{\gamma+1}$, $r_{\eps}:= \eps^{q \wedge \frac12}$ and set $(Y_t^{(\eps)})_{t\geq\eps t_0}:=\left(r_{\eps}\Theta^{-1}_{t/\eps}(X_{t/\eps},V_{t/\eps})^T\right)_{t\geq \eps t_0}$. Let $\mathcal{B}$ be a standard two-dimensional Brownian motion on $\mathbb{R}^2$.
	\begin{enumerate}[label=(\roman*)]
		\item (Super-critical regime i.e.\ $2q>1$). The rescaled process $Y^{(\eps)}$ converges in distribution towards $\left(\mathcal{B}_{\frac{t}{2}} \right)_{ t > 0}$.
		\item (Critical regime i.e.\ $2q=1$). Assume that $\gamma=1$. The rescaled process $Y^{(\eps)}$ converges in distribution towards $\left(\frac{1}{\sqrt{2t}}\int_{0}^t\sqrt{s}\dd \mathcal{B}_{s} \right)_{ t >0}$, which is the centered Gaussian process with covariance kernel $K(s,t)= \frac{(s\wedge t)^{2}}{4\sqrt{st}}I_2$.
		\item (Sub-critical regime i.e.\ $2q<1$). Assume that $\gamma=1$ and $\beta \in \left(\frac{1}{2},1\right)$. The rescaled process $Y^{(\eps)}$ converges in finite dimensional distributions towards the centered Gaussian process with covariance kernel $K(s,t)= \frac{1}{2}s^{\beta}\mathbb{1}_{\{s=t\}}I_2$. 
	\end{enumerate}
\end{theo}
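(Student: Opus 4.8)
The plan is to pass to the frame rotating with the Hamiltonian flow, where \eqref{eq: confined} loses its $O(1)$ oscillatory drift, and then to treat the three regimes by a common scheme — uniform moment bounds, tightness (resp.\ control of the finite-dimensional laws), and identification of the limit — the regimes differing only in the strength of the rescaled friction drift relative to the noise (negligible, of order one, or diverging-but-balanced). Writing $W_t=(X_t,V_t)^T$ and $e_2=(0,1)^T$, system \eqref{eq: confined} reads $\dd W_t=JW_t\dd t+e_2\dd L_t-e_2\,\sgn(V_t)\abs{V_t}^\gamma t^{-\beta}\dd t$ with $J$ the generator of the rotation group $(\Theta_t)$; since the noise- and friction-free flow is $(\Theta_t)$, the substitution $Y_t=\Theta_t^{-1}W_t$ cancels $JW_t\dd t$ and a direct computation gives
\begin{equation*}
	\dd Y_t=a_t\dd L_t-a_t\,\sgn(V_t)\abs{V_t}^\gamma t^{-\beta}\dd t,\qquad a_t=\Theta_t^{-1}e_2,\qquad V_t=\langle a_t,Y_t\rangle,
\end{equation*}
so $a_ta_t^T$ is $2\pi$-periodic with time-average $\tfrac12 I_2$, and for $\gamma=1$ the equation is linear — the initial datum being deterministic, $Y$ and $Y^{(\eps)}$ are then Gaussian. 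Using that $\widehat B_\cdot:=\eps^{1/2}B_{\cdot/\eps}$ is a Brownian motion and the time-change $s=u/\eps$, one obtains $Y^{(\eps)}_t=r_\eps Y_{t_0}+N^{(\eps)}_t-D^{(\eps)}_t$, where $N^{(\eps)}_t=r_\eps\eps^{-1/2}\int_{\eps t_0}^t a_{u/\eps}\dd\widehat B_u$ has bracket $r_\eps^2\eps^{-1}\int_{\eps t_0}^t a_{u/\eps}a_{u/\eps}^T\dd u$ and $D^{(\eps)}_t=r_\eps^{1-\gamma}\eps^{\beta-1}\int_{\eps t_0}^t u^{-\beta}a_{u/\eps}\,\sgn(\langle a_{u/\eps},Y^{(\eps)}_u\rangle)\abs{\langle a_{u/\eps},Y^{(\eps)}_u\rangle}^\gamma\dd u$. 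With $q=\beta/(\gamma+1)$: regime (i) has $r_\eps=\eps^{1/2}$, bracket coefficient $r_\eps^2\eps^{-1}=1$ and drift coefficient $r_\eps^{1-\gamma}\eps^{\beta-1}=\eps^{\beta-(\gamma+1)/2}\to0$; regime (ii), with $\gamma=1$ forcing $\beta=1$, has $r_\eps=\eps^{1/2}$ and both coefficients equal to $1$; regime (iii), with $\gamma=1$ and $\beta<1$, has $r_\eps=\eps^{\beta/2}$ and bracket and drift coefficients both equal to $\eps^{\beta-1}\to+\infty$, so only their balance survives.

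\emph{Super-critical and critical regimes.} First I would establish uniform-in-$\eps$ moment bounds on $Y^{(\eps)}$ from \eqref{eq: confined} and deduce tightness of $(Y^{(\eps)})$ in $\mathcal{C}([\delta,T],\mathbb{R}^2)$ for all $0<\delta<T$, hence in $\mathcal{C}((0,+\infty),\mathbb{R}^2)$. Riemann-sum averaging of the $2\pi$-periodic map $s\mapsto a_sa_s^T$ (mean $\tfrac12 I_2$) gives $\langle N^{(\eps)}\rangle_t\to\tfrac t2 I_2$ uniformly on compacts when $r_\eps=\eps^{1/2}$, and $N^{(\eps)}$ being a Gaussian martingale, $N^{(\eps)}\Rightarrow(\mathcal B_{t/2})$. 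In regime (i), $\mathbb{E}\sup_{t\le T}\absd{D^{(\eps)}_t}\lesssim\eps^{\beta-(\gamma+1)/2}\to0$, using $\mathbb{E}\absd{Y_s}^\gamma\lesssim s^{\gamma/2}$; hence $Y^{(\eps)}\Rightarrow(\mathcal B_{t/2})$, which proves (i) by uniqueness of the limit. In regime (ii), $D^{(\eps)}_t=\int_{\eps t_0}^t u^{-1}(a_{u/\eps}a_{u/\eps}^T)Y^{(\eps)}_u\dd u$; writing $a_sa_s^T=\tfrac12 I_2+R_s$ with $R$ mean-zero periodic and integrating by parts against a periodic primitive of $R$ shows $\int u^{-1}R_{u/\eps}Y^{(\eps)}_u\dd u=O(\eps)$, so any subsequential limit $\mathcal Y$ solves $\dd\mathcal Y_t=\tfrac1{\sqrt2}\dd\mathcal B_t-\tfrac1{2t}\mathcal Y_t\dd t$ on $(0,+\infty)$ with $\mathcal Y_{0^+}=0$ (the term near $0$ being integrable since $\mathbb{E}\absd{Y^{(\eps)}_u}\lesssim u^{1/2}$). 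The integrating factor $\sqrt t$ gives $\mathcal Y_t=\tfrac1{\sqrt{2t}}\int_0^t\sqrt s\,\dd\mathcal B_s$, with covariance $\tfrac{(s\wedge t)^2}{4\sqrt{st}}I_2$; uniqueness of this solution yields (ii).

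\emph{Sub-critical regime.} Here $\gamma=1$, so $Y$ is Gaussian and it suffices to show convergence of finitely many one-time means and covariances. The mean $m_s=\mathbb{E}[Y_s]$ solves $\dot m_s=-s^{-\beta}(a_sa_s^T)m_s$; since $\absd{m_s}$ is non-increasing and $\int^{\infty}s^{-\beta}\dd s=+\infty$ (as $\beta<1$), $m_s\to0$, whence $r_\eps m_{t/\eps}\to0$. The centred covariance $\Sigma_s$ solves the Lyapunov-type ODE $\dot\Sigma_s=-s^{-\beta}\big((a_sa_s^T)\Sigma_s+\Sigma_s(a_sa_s^T)\big)+a_sa_s^T$; an asymptotic analysis with averaging of the fast oscillations (leading balance $\dot{\overline\Sigma}_s\simeq-s^{-\beta}\overline\Sigma_s+\tfrac12 I_2$, hence $\overline\Sigma_s\sim\tfrac12 s^\beta I_2$) gives $\Sigma_s=\tfrac12 s^\beta I_2+o(s^\beta)$, and thus $\mathbb{E}\big[Y^{(\eps)}_t(Y^{(\eps)}_t)^T\big]=r_\eps^2\Sigma_{t/\eps}\to\tfrac12 t^\beta I_2$. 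For distinct rescaled times $t\ne t'$ the cross-covariance is $\Phi(s',s)\Sigma_s$ with $s=t/\eps<s'=t'/\eps$, $\Phi$ the fundamental matrix of the homogeneous system, and $\Phi(s',s)$ contracts at rate $\exp\!\big(-c\,(s'^{1-\beta}-s^{1-\beta})\big)=\exp\!\big(-c\,\eps^{\beta-1}(t'^{1-\beta}-t^{1-\beta})\big)\to0$, so the rescaled cross-covariances vanish. Hence the finite-dimensional laws converge to those of independent centred Gaussians with covariances $\tfrac12 t_j^\beta I_2$, i.e.\ to the centred Gaussian process with kernel $\tfrac12 s^\beta\mathbb{1}_{\{s=t\}}I_2$; being the kernel of a white noise, this also explains why only finite-dimensional convergence can hold.

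\emph{The main obstacle.} I expect the crux to be the quantitative averaging of the fast rotation — justifying $\int\phi(u/\eps)G_u\,\dd u\to\overline\phi\int G_u\,\dd u$ for the periodic matrix $\phi=a\cdot a^T$ against adapted, time-dependent and (when $\gamma\ne1$) nonlinear integrands whose coefficients are unbounded in time — together with, in the sub-critical regime, the sharp asymptotics of the Lyapunov ODE (the precise constant $\tfrac12$ and power $s^\beta$) and the contraction estimate for its fundamental matrix; the uniform-in-$\eps$ moment bounds used to kill $D^{(\eps)}$ in (i) and to get tightness in (i)--(ii) are the other essential input.
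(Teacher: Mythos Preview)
Your treatment of (i) coincides with the paper's: the martingale $N^{(\eps)}$ is shown to converge (the paper computes the covariance kernel directly via Itô's isometry, you argue via bracket convergence --- equivalent for Gaussian martingales) and the drift $D^{(\eps)}$ is killed in $L^1$ using the moment bound $\mathbb{E}\absd{Z_s}^\gamma\lesssim s^{\gamma/2}$; tightness via Kolmogorov is also the paper's route.

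For (ii) and (iii) your route is genuinely different. The paper exploits linearity ($\gamma=1$) to write $Z$ through the resolvent matrix $R$ of the second-order ODE $x''+t^{-\beta}x'+x=0$, obtaining an explicit Wiener integral for $Y$; everything then reduces to a covariance computation using the asymptotic expansion $R_t=f(t)e^{tA}+O(f(t)t^{1-2\beta})$ with $f(t)=\exp(-t^{1-\beta}/(2(1-\beta)))$ (or $1/\sqrt t$ if $\beta=1$), plus a periodic-averaging lemma for integrals of the type $\int f^{-2}\cos^2$. This handles the critical and sub-critical cases in one stroke: the limiting covariance is $k_\beta\,\tfrac{f(t/\eps)}{f(s/\eps)}s^\beta I_2$, which stays positive ($\beta=1$) or vanishes off-diagonal ($\beta<1$). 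You instead remain in the $Y$-equation: in (ii) you average the periodic matrix $a_sa_s^T$ by integration by parts to pass to a limiting linear SDE $\dd\mathcal Y=\tfrac1{\sqrt2}\dd\mathcal B-\tfrac1{2t}\mathcal Y\,\dd t$ and solve it; in (iii) you analyse the Lyapunov ODE for $\Sigma_s=\Cov(Y_s)$ by averaging and kill cross-covariances via the contraction of the fundamental matrix $\Phi$.

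Both approaches are valid. The paper's buys a unified treatment of (ii)--(iii) through one clean ODE lemma (whose proof is where the restriction $\beta>\tfrac12$ enters), and stays purely at the level of Gaussian covariances. Yours is more process-oriented and closer to stochastic-averaging arguments, which could in principle generalise better to nonlinear drifts. But note that the hard analytical inputs you defer to the ``crux'' --- the contraction $\|\Phi(s',s)\|\lesssim\exp(-c(s'^{1-\beta}-s^{1-\beta}))$ and the sharp asymptotic $\Sigma_s\sim\tfrac12 s^\beta I_2$ --- are precisely the content of the paper's resolvent expansion (the decay is $f(s')/f(s)$) and its Laplace-type integral estimate; so the two proofs are technically closer than they look, the paper simply packaging the averaging once, inside the ODE asymptotics, rather than at each step. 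One small correction: in your integration-by-parts for (ii), the remainder on $[\eps t_0,t]$ is $O(\eps^{1/2})$ rather than $O(\eps)$ because of the singularity at the lower endpoint, though this does not affect the conclusion if you work on compacts $[\delta,T]\subset(0,\infty)$.
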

Let us denote by $\psi$ the characteristic exponent of the symmetric stable process $L$. It follows from  \citer[Theorem $14.15$ p. 86]{SatoLevyprocessesinfinitely1999} that there exists $a>0$ such that for all $\xi \in \mathbb{R}$,
\begin{equation}\label{eq:symbol_stable}\psi(\xi) = -a|\xi|^\alpha.\end{equation}

In the next theorem, the convergences occur in the space ${\mathcal{D}}$ endowed with the Skorokhod metric. 
\begin{theo}[Stable case i.e.\ $\alpha\in (1,2)$]\label{ThmlongtimeY_stable} Assume that $\gamma \in (0, \alpha)$. Define $q:= \frac{\beta}{\gamma+\alpha-1}$,  $r_{\eps}:= \eps^{q \wedge \frac{1}{\alpha}}$ and set $(Y_t^{(\eps)})_{t\geq\eps t_0}:=\left(r_{\eps}\Theta^{-1}_{t/\eps}(X_{t/\eps},V_{t/\eps})^T\right)_{t\geq \eps t_0}$. Let $\mathcal{L}$ be the rotationally invariant stable process on $\mathbb{R}^2$, whose characteristic exponent is given by 
	\[ \xi \in \mathbb{R}^2 \mapsto -\widetilde{C}\absd{\xi}^\alpha, \quad \mbox{with } \widetilde{C}:= \frac{a}{2\pi}\int_{0}^{2\pi} \abs{\cos(x)}^{\alpha}\dd x.\]
	\begin{enumerate}[label=(\roman*)]
		\item (Super-critical regime i.e.\ $\alpha q>1$). The rescaled process $Y^{(\eps)}$ converges in distribution towards $\left(\mathcal{L}_{t} \right)_{ t > 0}$.
		\item (Critical regime i.e.\ $\alpha q=1$). Assume that $\gamma=1$. The rescaled process $Y^{(\eps)}$ converges in distribution towards the Lévy-type process $\left(\frac{1}{\sqrt{t}}\int_{0}^t\sqrt{s}\dd \mathcal{L}_{s} \right)_{ t >0}$.
		\item (Sub-critical regime i.e.\ $\alpha q<1$). Assume that $\gamma=1$ and $\beta \in \left(\frac{1}{2},1\right)$. Then, for all $(t_1, \cdots, t_d)\in (0, +\infty)^{d}$, $\left(Y^{(\eps)}_{t_1}, \cdots, Y^{(\eps)}_{t_d}\right)$ converges in distribution towards the product measure $\mu_{t_1} \otimes \cdots \otimes \mu_{t_d}$, where $\mu_{t}$ is the distribution with characteristic function 
		\[\xi \in \mathbb{R}^2 \mapsto \exp\left(-\frac{2}{\alpha}\widetilde{C}\absd{\xi}^{\alpha}t^{\beta}\right).\]
	\end{enumerate}
\end{theo}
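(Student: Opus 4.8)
\emph{Plan.} The idea is to remove the deterministic oscillation of the harmonic oscillator, to identify after rescaling a process with independent increments whose characteristics can be computed explicitly, and to pass to the limit by combining the self-similarity of $L$ with the fact that the fast rotation $e(t):=(\sin t,\cos t)^{T}$ equidistributes on the unit circle. First, writing $W_t=(X_t,V_t)^{T}$, the system \eqref{eq: confined} reads $\dd W_t=AW_t\,\dd t+e_2\bigl(\dd L_t-\sgn(V_t)\abs{V_t}^{\gamma}t^{-\beta}\,\dd t\bigr)$ with $A=\left(\begin{smallmatrix}0&1\\-1&0\end{smallmatrix}\right)$, $e_2=(0,1)^{T}$ and $\Theta_t=e^{tA}$. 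Since $A$ and $e^{tA}$ commute and $\Theta$ is deterministic, the product rule (no Itô correction) yields the closed equation $\dd Y_t=e(t)\bigl(\dd L_t-\sgn(V_t)\abs{V_t}^{\gamma}t^{-\beta}\,\dd t\bigr)$, where $e(t)=\Theta_{-t}e_2=(\sin t,\cos t)^{T}$, $V_t=\langle e(t),Y_t\rangle$ and hence $\abs{V_t}\le\absd{Y_t}=\absd{W_t}$. Using that $\widetilde{L}_u:=\eps^{1/\alpha}L_{u/\eps}$ has the same law as $L$ and changing variables $s=u/\eps$, one gets, for $t\ge\eps t_0$,
\[
\begin{aligned}
Y^{(\eps)}_t={}& r_{\eps}Y_{t_0}+r_{\eps}\eps^{-1/\alpha}\int_{\eps t_0}^{t}e(u/\eps)\,\dd\widetilde{L}_u\\
& -r_{\eps}\eps^{\beta-1}\int_{\eps t_0}^{t}e(u/\eps)\,\sgn(V_{u/\eps})\abs{V_{u/\eps}}^{\gamma}u^{-\beta}\,\dd u.
\end{aligned}
\]
This identity is the common starting point for the three regimes.

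\emph{Super-critical regime.} Here $r_{\eps}=\eps^{1/\alpha}$, so the stable term equals $M^{(\eps)}_t:=\int_{\eps t_0}^{t}e(u/\eps)\,\dd\widetilde{L}_u$. I would first prove the a priori bound $\mathbb{E}\abs{V_t}^{p}\le C_p\,t^{p/\alpha}$ for $t\ge t_0$ and every $p\in(0,\alpha)$ (the restriction $p<\alpha$ is unavoidable because the driving noise has no moment of order $\alpha$; for $\gamma=1$ it follows from the linear structure recalled below, in general from a self-referential Gronwall/bootstrap estimate using $\abs{V_t}\le\absd{Y_t}$ and the fact that $\int_{t_0}^{t}e(s)\,\dd L_s$ is a symmetric stable vector of scale comparable to $t^{1/\alpha}$). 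Applying this with $p=\gamma$ in the last integral and integrating in $u$, the power of $\eps$ in front turns out to be strictly positive exactly when $\alpha q>1$ (and if the $u$-integral diverges at the endpoint $\eps t_0$ the divergence is compensated, still leaving a positive power), so the friction term tends to $0$ in $L^{1}$, uniformly on $[\delta,T]$ for every $0<\delta<T$. It then remains to check $M^{(\eps)}\Rightarrow\mathcal{L}$: for fixed $\eps$, $M^{(\eps)}$ is a process with independent increments with no drift and no Gaussian part, and its cumulative Lévy measure $A\mapsto\int_{\eps t_0}^{t}\nu_L(\{x:\,xe(u/\eps)\in A\})\,\dd u$ converges, by equidistribution of $(e(u/\eps))_u$, to $t\,\bar\nu(A)$, where $\bar\nu$ is rotationally invariant with exponent $-\widetilde{C}\absd{\xi}^{\alpha}$, the constant $\widetilde{C}=\frac{a}{2\pi}\int_0^{2\pi}\abs{\cos x}^{\alpha}\,\dd x$ being precisely the one produced by averaging $\abs{\cos}^{\alpha}$. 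The convergence criterion for processes with independent increments (in terms of their characteristics) gives $M^{(\eps)}\Rightarrow\mathcal{L}$ in $\mathcal{D}((0,+\infty),\mathbb{R}^{2})$, and a Slutsky argument finishes case (i).

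\emph{The case $\gamma=1$.} Now the equation for $Y$ is linear, $\dd Y_t=e(t)\,\dd L_t-t^{-\beta}e(t)e(t)^{T}Y_t\,\dd t$, hence $Y_t=\Phi_t\bigl(Y_{t_0}+\int_{t_0}^{t}\Phi_s^{-1}e(s)\,\dd L_s\bigr)$ for its deterministic fundamental matrix $\Phi_t$. Averaging $e(t)e(t)^{T}$ around its mean $\tfrac12 I_2$, write $\Phi_t=h(t)\Psi_t$ with $h(t)=\exp\bigl(-\int_{t_0}^{t}\tfrac12 s^{-\beta}\,\dd s\bigr)$, so that $\dot\Psi_t=-t^{-\beta}\bigl(e(t)e(t)^{T}-\tfrac12 I_2\bigr)\Psi_t$. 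Since $e(t)e(t)^{T}-\tfrac12 I_2$ has trace $0$ and a bounded primitive while $t^{-\beta}$ is monotone, one has $\det\Psi_t\equiv 1$ and, by an asymptotic-integration argument, $\Psi_t\to\Psi_{\infty}$ with $\Psi_{\infty}$ invertible and a quantitative rate. The three regimes correspond to the three behaviours of $h$: $h(t)\to h_{\infty}\in(0,1)$ if $\beta>1$; $h(t)\sim(t_0/t)^{1/2}$ if $\beta=1$; and $h(t)=\exp\!\bigl(-\tfrac{t^{1-\beta}-t_0^{1-\beta}}{2(1-\beta)}\bigr)\to0$ super-exponentially if $\beta<1$. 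In the critical case $\beta=1$, combining the above with the rescaling of the first paragraph gives $Y^{(\eps)}_t=o_P(1)+t^{-1/2}\Psi_{t/\eps}\widetilde{M}^{(\eps)}_t$ with $\widetilde{M}^{(\eps)}_t:=\int_{\eps t_0}^{t}u^{1/2}\Psi_{u/\eps}^{-1}e(u/\eps)\,\dd\widetilde{L}_u$; because $\Psi_{t/\eps}\to\Psi_{\infty}$ and $\Psi_{u/\eps}^{-1}\to\Psi_{\infty}^{-1}$ uniformly on compacts and $(\Psi_{\infty}^{-1})^{T}\Psi_{\infty}^{T}=I_2$, the matrices $\Psi_{\infty}$ cancel in the characteristic-function computation, and the rotation-averaging produces $\bigl(\tfrac{1}{\sqrt t}\int_0^{t}\sqrt s\,\dd\mathcal{L}_s\bigr)_{t>0}$ as the $\mathcal{D}$-limit, proving (ii). In the sub-critical case $\tfrac12<\beta<1$, the factor $h(u/\eps)^{-1}$ appearing inside the stable integral is sharply peaked near $u=t$, on a window of width comparable to $\eps^{1-\beta}\gg\eps$; a Laplace-type estimate in the characteristic function then yields $Y^{(\eps)}_t\Rightarrow\mu_t$. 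For distinct times $t_1<\dots<t_d$, fixing $\delta<\min_i(t_{i+1}-t_i)$, the contribution to $Y^{(\eps)}_{t_i}$ coming from $u<t_i-\delta$ is super-exponentially negligible, so $Y^{(\eps)}_{t_i}$ becomes asymptotically a functional of the increments of $\widetilde{L}$ over the disjoint windows $[t_i-\delta,t_i]$; hence the joint characteristic function factorises and $(Y^{(\eps)}_{t_1},\dots,Y^{(\eps)}_{t_d})\Rightarrow\mu_{t_1}\otimes\dots\otimes\mu_{t_d}$, which is (iii).

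\emph{Main obstacle.} The difficulty lies in two places. The first is the a priori moment bound: the Gronwall/bootstrap estimate must simultaneously accommodate the low-regularity (when $\gamma<1$) or superlinear (when $\gamma>1$) friction and the complete lack of a moment of order $\alpha$ for the stable noise, forcing one to work with exponents $p<\alpha$ and to keep track of the constants. The second, and more fundamental, is the asymptotic analysis of the fundamental matrix $\Phi_t=h(t)\Psi_t$ together with the \emph{quantitative} form of the averaging principle $\frac1T\int_0^{T}f(u/\eps)\,\dd u\to\frac1{2\pi}\int_0^{2\pi}f$ used throughout: one needs error bounds good enough to insert this averaging inside characteristic functions, inside the convergence of Lévy-measure characteristics, and — in the sub-critical regime — inside a Laplace-type concentration whose effective length scale $\eps^{1-\beta}$ must be kept much larger than the oscillation scale $\eps$. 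Once these are in hand, the remaining steps (the Slutsky-type arguments, tightness via the criterion for processes with independent increments, and the identification of the limit laws through their characteristic functions) are routine.
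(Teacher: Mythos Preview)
Your plan is correct and structurally the same as the paper's: rotate away the harmonic oscillation, isolate the Wiener--L\'evy integral $\int e(s)\,\dd L_s$, use equidistribution of $e(\cdot/\eps)$ on the circle to identify the limiting characteristic exponent $-\widetilde{C}\|\xi\|^{\alpha}$, and control the friction remainder by a moment bound on $V$. The differences are in implementation, not in spirit.

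In the linear case $\gamma=1$ you parametrize the solution by the fundamental matrix $\Phi_t=h(t)\Psi_t$ of the rotated system $\dot Y=-t^{-\beta}e(t)e(t)^{T}Y$, splitting off the averaged decay $h$; the paper instead goes back to the scalar second-order equation $x''+t^{-\beta}x'+x=0$, uses its resolvent $R_t$, and proves the asymptotic expansion $R_t=f(t)e^{tA}+O(f(t)t^{1-2\beta})$. The two objects are related by $\Phi_t=e^{-tA}R_tR_{t_0}^{-1}e^{t_0A}$, and your $\Psi_t\to\Psi_{\infty}$ is exactly the paper's $e^{-tA}R_t/f(t)\to I_2$ (so in fact $\Psi_{\infty}=I_2$ up to normalization, which resolves the slightly awkward cancellation you describe). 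Your integration-by-parts argument for the convergence of $\Psi_t$ requires precisely $\beta>\tfrac12$, which is the same restriction the paper meets in its ODE lemma. For functional convergence you invoke the characteristics criterion for processes with independent increments, whereas the paper separates f.d.d.\ convergence (direct computation of characteristic functions) from tightness (Aldous' criterion applied to $Y^{(\eps)}$ itself); both routes work, yours being somewhat more packaged. In the sub-critical regime your ``disjoint windows'' localization is a clean conceptual rephrasing of what the paper does by showing that the cross term $I_2^{(\eps)}$ in the joint characteristic function is $O(f(t/\eps)f(s/\eps)^{-1})\to0$ for $s<t$.

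The one place where your sketch is thinner than the paper is the moment bound $\mathbb{E}|V_t|^{p}\le C_p t^{p/\alpha}$ for $p<\alpha$. A bare Gr\"onwall/bootstrap does not close here because the stable increment has no moment of order $\alpha$; the paper's actual mechanism is to apply It\^o's formula to $(\eta+\|Z\|^2)^{\kappa/2}$, split the jumps at the time-dependent threshold $|z|\lessgtr t^{1/\alpha}$, and optimize $\eta=t^{2/\alpha}$. This is the step you flag as an obstacle, and it is genuinely where the work lies.
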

\begin{remark}
	
	The symmetry of $L$ is only required to ensure the well-posedness of \eqref{eq: confined} when $\gamma <1$ using \cite{MarinoWeakwellposednessdegenerate2021}.
 
\end{remark}

\begin{remark}
	At first sight, the parameter $\alpha$ does not seem to appear in the limiting process of Theorem \ref{ThmlongtimeY_stable} $(ii)$ contrary to the Brownian case where there is a factor $\frac{1}{\sqrt{2}}$ in front of the stochastic integral. The reason is that the constant $\alpha$ is hidden in the constant $\widetilde{C}$ which defines the $\alpha$-stable process $\mathcal{L}$ in Theorem \ref{ThmlongtimeY_stable}. Indeed, if we formally take $\alpha =2$ in Theorem \ref{ThmlongtimeY_stable} $(ii)$, we recover the limiting process of Theorem \ref{ThmlongtimeY_mB} $(ii)$. Let us justify it. When $\alpha =2$, the constant $\widetilde{C}$ can be computed explicitly and is equal to $\frac{1}{4}$. As the characteristic exponent of $\mathcal{L}$ is given by $$ \xi \in \mathbb{R}^2 \mapsto - \frac{\Vert \xi \Vert^2}{4},$$ we deduce that $(\mathcal{L}_t)_{t\geq 0} = (\mathcal{B}_{t/2})_{t \geq 0}$ in distribution. When $\alpha =2$, the limiting process in \ref{ThmlongtimeY_stable} $(ii)$ is thus given, for all $t > 0$, by $$ \frac{1}{\sqrt{2t}} \int_0^t \sqrt{s} \sqrt{2}\dd \mathcal{B}_{s/2},$$ which is equal in distribution to the limiting process of Theorem \ref{ThmlongtimeY_mB} $(ii)$ using the self-similarity of $\mathcal{B}$. We similarly notice that if we formally take $\alpha =2$ in Theorem \ref{ThmlongtimeY_stable} $(i)$ and $(iii)$, we recover the same limiting processes as in Theorem \ref{ThmlongtimeY_mB} $(i)$ and $(iii)$ with the same rescaling. 
	
\end{remark}
\begin{remark}
	The Hamiltonian process associated with the system is given by \[H_t:= \dfrac{1}{2}\abs{V_t}^2+\dfrac{1}{2}\abs{X_t}^2= \dfrac{1}{2}\absd{Z_t}^2=\dfrac{1}{2}\absd{Y_t}^2.\] Combining the preceding results with the continuous mapping theorem, we deduce the convergence of the rescaled energy process $(H^{(\eps)}_t)_{t>0}:=(r_{\eps}^2H_{t/\eps})_{t>0}$ as $\eps \rightarrow 0$ either as a process in the critical and super-critical regimes, or for finite dimensional distributions in the sub-critical regime. \\
	For example in the super-critical regime with $\alpha=2$, the limiting energy process $(H_t^{0})_{t\geq 0}:=(\frac{1}{2}\|\mathcal{B}_{\frac{t}{2}}\|^2)_{t\geq 0}$ is the squared Bessel process, which is the solution to the following equation
	\[\dd H_t^0=\sqrt{H_t^{0}}\dd B_t+\dfrac{1}{2}\dd t, \quad H_0^{0}=0,\]
	where $B$ is a standard one-dimensional Brownian motion.
	Note that we recover the limiting energy process obtained in \citer[Theorem 2.1]{AlbeverioLongtimebehavior1994} for the non-damped Hamiltonian system. The interpretation is that if the frictional force decreases sufficiently quickly as $t \rightarrow + \infty$, namely if $\beta$ is large enough, then the rescaled Hamiltonian process converges as if there were no damping. 
 
\end{remark}
We obtain furthermore the convergence in distribution as $t \to + \infty$ of $t^{-q\wedge \frac{1}{\alpha}} (X_t,V_t)^{T}$ in the following corollary.
\begin{coro}\label{Corollary_cv_Z}
	Let us define $(Z_t^{(\eps)})_{t\geq\eps t_0}:=(r_{\eps}(X_{t/\eps},V_{t/\eps})^T)_{t\geq \eps t_0}$, where $r_{\eps}:= \eps^{q \wedge \frac{1}{\alpha}}$. The rescaled process $Z^{(\eps)}$ does not converge in distribution. However, we deduce from Theorems \ref{ThmlongtimeY_mB} and \ref{ThmlongtimeY_stable} and under the same assumptions, the convergence in distribution of $r_{1/t}(X_t,V_t)^T$ towards explicit limits, as $t\to +\infty$.\\ 
	
	In the Brownian case, the limit is either $\mathcal{N}(0,\frac{1}{2} I_2)$ in the super-critical and sub-critical regimes, or $\mathcal{N}(0,\frac{1}{4} I_2)$ in the critical regime. \\
	
	In the stable case, keeping the same notations as in Theorem \ref{ThmlongtimeY_stable}, the characteristic function of the limit is given, for all $\xi \in \mathbb{R}^2$, by
	\begin{enumerate} [label=(\roman*)]
		
		\item $ \exp\left(-\widetilde{C}\absd{\xi}^{\alpha}\right)$ in the super-critical regime,
		\item $ \exp\left(-\left( 1 + \frac{\alpha}{2} \right)^{-1}\widetilde{C}\absd{\xi}^{\alpha}\right)$ in the critical regime,
		\item $ \exp\left(-\frac{2}{\alpha}\widetilde{C}\absd{\xi}^{\alpha}\right)$ in the sub-critical regime.
		
	\end{enumerate}
\end{coro}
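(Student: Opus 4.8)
The plan is to exploit one deterministic identity relating $Z^{(\eps)}$ and $Y^{(\eps)}$. Since $(X_s,V_s)^T=\Theta_s Y_s$ and rotations commute with scalar multiplication, for every $t>0$ and $\eps>0$ one has
\[
	Z^{(\eps)}_t=r_\eps(X_{t/\eps},V_{t/\eps})^T=\Theta_{t/\eps}\,r_\eps\,\Theta_{t/\eps}^{-1}(X_{t/\eps},V_{t/\eps})^T=\Theta_{t/\eps}\,Y^{(\eps)}_t.
\]
Taking $t=1$ and $\eps=1/t$ gives $r_{1/t}(X_t,V_t)^T=\Theta_t\,Y^{(1/t)}_1$, so the whole statement reduces to letting the converging random vector $Y^{(1/t)}_1$ pass through the deterministic rotation $\Theta_t$, which itself has no limit as $t\to+\infty$. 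The same identity written at two distinct times $s_1\neq s_2$ is what I would use to see that $Z^{(\eps)}$ has no weak limit as a process: the cross-structure of a subsequential limit of $(Z^{(\eps)}_{s_1},Z^{(\eps)}_{s_2})$ is conjugated by $\Theta_{s_1/\eps}\Theta_{s_2/\eps}^{-1}=\Theta_{(s_1-s_2)/\eps}$, whose angle modulo $2\pi$ runs, along subsequences, over all of $[0,2\pi)$, so that in the critical and super-critical regimes (where $Y^0_{s_1},Y^0_{s_2}$ are genuinely dependent) different subsequences produce different two-dimensional laws, while in the sub-critical regime the obstruction is the same as for $Y^{(\eps)}$ itself.

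First I would record that, for each fixed $t>0$, $Y^{(\eps)}_t$ converges in distribution as $\eps\to0$ to a rotationally invariant limit $Y^0_t$. In the sub-critical regimes this is part of the finite-dimensional statement in Theorems~\ref{ThmlongtimeY_mB} and~\ref{ThmlongtimeY_stable}; in the critical and super-critical regimes the coordinate map $\omega\mapsto\omega(t)$ is continuous at every $\omega$ that is continuous at $t$, and the limiting processes are stochastically continuous, hence a.s.\ continuous at the fixed time $t$, so the continuous mapping theorem applies. The law of $Y^0_1$ is then read off directly: it is $\mathcal{N}(0,\tfrac12 I_2)$ in the super- and sub-critical Brownian cases, $\mathcal{N}(0,\tfrac14 I_2)$ in the critical Brownian case (the variance being $\tfrac12\bigl(\int_0^1 s\,\dd s\bigr)I_2$), and, in the stable cases, the law of $\mathcal{L}_1$, of the stable integral $\int_0^1\sqrt s\,\dd\mathcal{L}_s$, or $\mu_1$, respectively; each of these is rotationally invariant on $\mathbb{R}^2$ because $\mathcal{L}$ is.

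The key step is a transfer lemma: if $W_n$ converges in distribution to $W$ on $\mathbb{R}^2$ with $W$ rotationally invariant and $(\theta_n)$ is an arbitrary sequence of rotations, then $\theta_n W_n$ also converges in distribution to $W$. To prove it, write $\phi_n,\phi$ for the characteristic functions of $W_n,W$; Lévy's continuity theorem yields $\phi_n\to\phi$ uniformly on compact subsets of $\mathbb{R}^2$, and since $\absd{\theta_n^T\xi}=\absd{\xi}$ and $\phi(\theta_n^T\xi)=\phi(\xi)$ we get
\[
	\bigl\lvert\phi_n(\theta_n^T\xi)-\phi(\xi)\bigr\rvert\leq\sup_{\absd{\eta}=\absd{\xi}}\bigl\lvert\phi_n(\eta)-\phi(\eta)\bigr\rvert\longrightarrow0,
\]
so the characteristic function $\xi\mapsto\phi_n(\theta_n^T\xi)$ of $\theta_n W_n$ converges pointwise to $\phi$, whence the claim by Lévy's theorem again. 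Applying this with $W_n=Y^{(1/t_n)}_1$ and $\theta_n=\Theta_{t_n}$ along any sequence $t_n\to+\infty$ gives that $r_{1/t}(X_t,V_t)^T$ converges in distribution to $Y^0_1$ as $t\to+\infty$. In the Brownian case this is the announced $\mathcal{N}(0,\tfrac12 I_2)$, resp.\ $\mathcal{N}(0,\tfrac14 I_2)$; in the stable case the announced characteristic functions follow from $\mathbb{E}\exp\bigl(i\langle\xi,\int_0^1 h(s)\dd\mathcal{L}_s\rangle\bigr)=\exp\bigl(-\widetilde{C}\absd{\xi}^\alpha\int_0^1\abs{h(s)}^\alpha\dd s\bigr)$, used with $h\equiv1$ (super-critical), with $h(s)=\sqrt s$ so that $\int_0^1 s^{\alpha/2}\dd s=(1+\tfrac\alpha2)^{-1}$ (critical), and directly from the definition of $\mu_1$ (sub-critical).

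The only genuine analytic ingredient is the uniform-on-compacts convergence of characteristic functions used in the transfer lemma, which is exactly Lévy's continuity theorem, combined with the observation that all the one-dimensional limits are rotationally invariant so that the non-converging frame $\Theta_t$ washes out. I do not expect any further obstacle beyond the bookkeeping of substituting $\eps=1/t$ into the functional limits already established and identifying the marginals.
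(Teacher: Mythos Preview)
Your argument for the convergence of $r_{1/t}(X_t,V_t)^T$ is exactly the paper's: the transfer lemma you state and prove is the paper's \cref{Lemma_oscillating_gaussian_convergence}, and the substitution $\eps=1/t$, $t=1$ together with the rotational invariance of the one-time limits is precisely how the paper concludes.

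For the non-convergence of $Z^{(\eps)}$ you take a genuinely different route. The paper argues by contradiction through an explicit computation: in the Brownian super-critical regime it isolates the Gaussian integral $I_t^{(\eps)}=\sqrt{\eps}\int_{t_0}^{t/\eps}\sin(t/\eps-s)\,\dd B_s$ and shows that $\mathbb{E}[I_t^{(\eps)}I_s^{(\eps)}]=\tfrac{s}{2}\cos\bigl(\tfrac{t-s}{\eps}\bigr)+o(1)$ has no limit for $s\neq t$; in the stable case it does the analogous computation on the characteristic function of $I_t^{(\eps)}-I_s^{(\eps)}$. Your approach is more structural: you pass to subsequences along which the rotations $\Theta_{s_i/\eps}$ converge, note that different subsequential limits of $\Theta_{(s_1-s_2)/\eps}$ produce different joint laws of $(R_1Y^0_{s_1},R_2Y^0_{s_2})$ because the cross-covariance (Brownian case) or joint characteristic function (stable case) of $(Y^0_{s_1},Y^0_{s_2})$ is invariant under the diagonal rotation action but not under independent rotations of the two coordinates, and conclude. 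This is cleaner conceptually and covers the critical regime at the same stroke, but it is only a sketch: you should make explicit that the joint law of $(Y^0_{s_1},Y^0_{s_2})$ is invariant under diagonal rotations (so that only $R_1R_2^{-1}$ matters) and that the resulting laws are genuinely distinct for distinct values of $R_1R_2^{-1}$. The paper's computation, by contrast, is completely explicit but is carried out only in the super-critical regime (the paper itself says ``We do the proof only in the super-critical regime''). Your remark on the sub-critical regime is the weakest point of both treatments: saying ``the obstruction is the same as for $Y^{(\eps)}$'' does not immediately transfer non-convergence as a process from $Y^{(\eps)}$ to $Z^{(\eps)}$, since the map between them depends on $\eps$; the honest statement is that in that regime the finite-dimensional limits are independent and rotationally invariant, so $Z^{(\eps)}$ actually does converge in f.d.d.\ (to the same white-noise limit), and the failure is only at the level of process convergence, exactly as for $Y^{(\eps)}$.
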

The switch between the three regimes results in different scale parameters of the limiting distributions. Let us also notice that in the Brownian setting, the position $X$ and the velocity $V$ become independent in large time since the covariance matrix of the limiting Gaussian distribution is diagonal. However, this is false for the stable case. Indeed, the limit is a rotationally invariant stable distribution on $\mathbb{R}^2$, which cannot have independent coordinates.\\

As in \cite{GradinaruAsymptoticbehaviortimeinhomogeneous2021a,GradinaruAsymptoticbehaviortimeinhomogeneous2021}, we highlight three regimes for the asymptotic behavior of the system. However, the rate of convergence of the position $X$ is different from that found in \cite{GradinaruAsymptoticbehaviortimeinhomogeneous2021a,GradinaruAsymptoticbehaviortimeinhomogeneous2021}, when $\mathcal{U}=0$. Indeed, contrary to the free potential system, the position process is somehow more diffusive. This is due to the structure of our model. Namely, the presence of the quadratic potential allows the noise to propagate more efficiently from the velocity component to the position one (see \cite{FedrizziRegularitystochastickinetic2017} for more details). This explains why both the limiting processes and the rate of convergence are different between our work and \cite{GradinaruAsymptoticbehaviortimeinhomogeneous2021a, GradinaruAsymptoticbehaviortimeinhomogeneous2021}. 
Let us also note that the position process grows more slowly in our case than when $\mathcal{U}=0$. For example, in the Brownian super-critical regime, $X_t$ asymptotically behaves as $\mathcal{N}(0,\frac{t}{2})$ when $t$ tends to infinity in our framework, but as $\mathcal{N}(0,\frac{t^3}{3})$ in the free potential one. This difference can also be seen in moment estimates established for the position process $X$ (see Remarks \ref{rqmoment} and \ref{rqmoment1}). This is explained by the fact that the quadratic potential confines the position of the particle through a spring force.

\subsection{Strategy and plan of the paper}

In our model, the particle is no longer free, contrary to \cite{GradinaruAsymptoticbehaviortimeinhomogeneous2021a,GradinaruAsymptoticbehaviortimeinhomogeneous2021}, and the equations on the position and the velocity are intrinsically linked to each other.
Therefore, we can no longer separate by components the study of the velocity-position process. Writing the system \eqref{eq: confined} in a vector viewpoint, as done in \cite{FedrizziRegularitystochastickinetic2017}, we use a variation of constants method to return to the study of a two-dimensional system in a free potential. We then adapt the methods used in \cite{GradinaruAsymptoticbehaviortimeinhomogeneous2021a,GradinaruAsymptoticbehaviortimeinhomogeneous2021}. In the super-critical regime, the proof is essentially based on the self-similarity of the driving process and on moment estimates of $V$ and $X$. In the critical and sub-critical regimes, we need to restrict ourselves to a linear drag force, i.e.\ $\gamma =1$, in order to rely on the study of the asymptotic behavior of the solution to the underlying non-autonomous ordinary differential equation (ODE).
Whenever the driving process is Brownian, we take advantage of the theory of Gaussian processes. The convergence is thus characterized by the study of the mean and covariance functions. In the case of a stable driving process, we need to study the convergence, in distribution and as a process, of a Wiener-Lévy integral (i.e.\ the integral of a deterministic function integrated against a stable process). The key point here is to use the fact that a Wiener-Lévy integral is a process with independent increments.

\bigskip

Our paper is organized as follows. We consider the case of a Brownian driving process in \cref{s: mB}, and we follow the same structure for an $\alpha$-stable driving process in \cref{s: stable}. For the sake of clarity, we opt for separating the two cases since the tools used are different. Finally, we state and prove some technical results in \cref{appendix_ode} and \cref{appendix}.

\section{Study of the system driven by a Brownian motion}\label{s: mB}
In this section, the driving process $L$ is supposed to be a standard Brownian motion, i.e.\ $\alpha=2$. It will be denoted by $B$ to keep standard notations. To be precise, \eqref{eq: confined} becomes
\begin{equation}\label{eq: confined_mB}
	\begin{cases}
		\dd V_t= \dd B_t -\sgn(V_t)\dfrac{\abs{V_t}^{\gamma}}{t^{\beta}}\dd t - X_t \dd t, \\
		\dd X_t= V_t\dd t,                                                                 \\ (V_{t_0},X_{t_0})=(v_{0},x_{0}).
	\end{cases}
\end{equation}

The previous system can be written in a vector viewpoint. Indeed, we set, for all $t \geq t_0$ and $v \in \mathbb{R}$,
\[Z_t:=\begin{pmatrix}
	X_t \\ V_t
\end{pmatrix},\ W_t:=\begin{pmatrix}
	0 \\ B_t
\end{pmatrix},\ A:=\begin{pmatrix}
	0 & 1 \\ -1 & 0
\end{pmatrix},\ \Gamma:=\begin{pmatrix}
	0 & 0 \\ 0 & 1
\end{pmatrix}\ \mbox{and}\ F(t,v):=\begin{pmatrix} 0\\\sgn(v)\dfrac{\abs{v}^{\gamma}}{t^{\beta}}\end{pmatrix}.\]
Thereby, the system \eqref{eq: confined} can be rewritten as
\begin{equation}
	\begin{cases}\label{eq: Z_mB}
		\dd Z_t=\Gamma\dd W_t+ AZ_t\dd t - F(t,V_t)\dd t, \\ Z_{t_0} = z_0:=(x_{0},v_{0})^T.
	\end{cases}
\end{equation}
Notice that the matrix $A$ is the rotation matrix of angle $\frac{\pi}{2}$ and that, for all $t \in \mathbb{R}$, 
\[ \Theta_{t} :=e^{tA}=\begin{pmatrix}
	\cos(t) & \sin(t) \\ -\sin(t) & \cos(t)
\end{pmatrix}.\] 

We also define, for any $t \geq t_0$, $Y_t:=e^{-tA}Z_t$. We easily check, with Itô's formula, that $Y$ is given by
\begin{equation}\label{EDS_Y}\tag{$SDE_{Y}$}
	\dd Y_t= e^{-tA}\Gamma\dd W_t -e^{-tA}F(t,V_t)\dd t.
\end{equation}

\subsection{Existence up to explosion}

\begin{theo}\label{thm_existence_brownian}
	
	The system of SDEs \eqref{eq: confined_mB} admits a unique (global) strong solution if $\gamma \in (0,1]$. And if $\gamma > 1$, there exists a unique strong solution defined up to its explosion time $\tau_{\infty}$.
	
\end{theo}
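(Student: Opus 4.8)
The plan is to establish local existence and uniqueness via a standard Picard iteration / truncation argument, and then show that when $\gamma \in (0,1]$ the solution cannot explode in finite time by deriving an a priori moment bound on the Hamiltonian (or Euclidean norm of $Z$). First I would note that the drift coefficient $(t,z) \mapsto Az - F(t, v)$ is locally Lipschitz in $z$ on $[t_0,+\infty) \times \mathbb{R}^2$ when $\gamma \ge 1$ (and even globally Lipschitz when $\gamma = 1$, since $v \mapsto \sgn(v)|v|$ is the identity), and continuous in $t$; the diffusion coefficient $\Gamma$ is constant. Hence the classical theory of SDEs driven by Brownian motion (e.g. the truncation argument: replace $F$ by $F_n$ which agrees with $F$ on $\{|v| \le n\}$ and is globally Lipschitz, solve the truncated equation uniquely, and glue the solutions using the stopping times $\tau_n = \inf\{t : |V_t| \ge n\}$) yields a unique strong solution up to the explosion time $\tau_\infty = \lim_n \tau_n$. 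When $\gamma = 1$ the drift is globally Lipschitz, so $\tau_\infty = +\infty$ outright. When $\gamma \in (1, +\infty)$ the truncation argument still gives existence and uniqueness up to $\tau_\infty$, which is the second assertion of the theorem.

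The remaining (and only non-routine) point is to show $\tau_\infty = +\infty$ almost surely when $\gamma \in (0,1)$; here there is also the subtlety, already flagged in the excerpt's Remark, that when $\gamma < 1$ the map $v \mapsto \sgn(v)|v|^\gamma$ is merely Hölder continuous at $0$, so pathwise uniqueness is not immediate. For non-explosion I would apply Itô's formula to the energy $H_t = \tfrac12\|Z_t\|^2 = \tfrac12(X_t^2 + V_t^2)$. Because $A$ is skew-symmetric, $Z_t \cdot A Z_t = 0$, so the potentially troublesome linear term drops out entirely, and since $F(t,v)$ has the same sign as $v$ in its second component, the frictional term $-Z_t \cdot F(t, V_t) = -\sgn(V_t)|V_t|^{\gamma+1} t^{-\beta} \le 0$ is dissipative. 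One is left with
\[
\dd H_t = V_t \, \dd B_t - \sgn(V_t)\frac{|V_t|^{\gamma+1}}{t^\beta}\,\dd t + \frac12\,\dd t \le V_t\,\dd B_t + \frac12\,\dd t,
\]
so $\mathbb{E}[H_{t\wedge\tau_n}] \le H_{t_0} + \tfrac12(t - t_0)$ by optional stopping on the local martingale. Since $H$ has at most linear growth in expectation uniformly in $n$, standard arguments (Markov's inequality applied to $H_{\tau_n \wedge t} \ge \tfrac12 n^2$ on $\{\tau_n \le t\}$) give $\mathbb{P}(\tau_n \le t) \to 0$, hence $\tau_\infty = +\infty$ a.s.

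For the uniqueness issue in the regime $\gamma \in (0,1)$, I would invoke a one-dimensional Yamada–Watanabe type comparison: the only non-Lipschitz part of the drift is the $v$-dependent frictional term, and it is \emph{monotone decreasing} in $v$ (as $v \mapsto -\sgn(v)|v|^\gamma$ is nonincreasing), while $A$ contributes a Lipschitz (indeed linear) term. Thus the drift satisfies a one-sided Lipschitz (monotonicity) condition in $z$ uniformly on compacts in $t$, and combined with the constant, additive noise this yields pathwise uniqueness by the standard argument (take two solutions, look at $d\|Z_t - Z'_t\|^2$, the noise terms cancel since the diffusion is the same and constant, the $A$-term is Lipschitz, and the frictional term has a favourable sign). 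The main obstacle is precisely making this monotonicity/uniqueness step clean for the degenerate, non-Lipschitz $\gamma < 1$ case; everything else — the truncation for local existence and the energy estimate for non-explosion — is routine once one exploits the skew-symmetry of $A$.
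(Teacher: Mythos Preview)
Your handling of $\gamma \geq 1$ matches the paper's (local Lipschitz drift plus localization; global Lipschitz when $\gamma=1$). For $\gamma \in (0,1)$ you diverge: the paper simply invokes a regularization-by-noise theorem from \cite{HonoreStrongregularizationBrownian2018} for degenerate Brownian systems with H\"older drift, which delivers strong existence and uniqueness in one stroke by exploiting the hypoelliptic smoothing of the noise. Your monotonicity argument for pathwise uniqueness is correct and more elementary --- since the noise is additive, the difference of two solutions is absolutely continuous, the skew-symmetry of $A$ kills the linear contribution, and the dissipativity of $v \mapsto -\sgn(v)|v|^\gamma$ forces $\|Z-Z'\|^2$ to be non-increasing from zero. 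The energy estimate you give for non-explosion is precisely the content of Proposition~\ref{prop: moment}, which the paper states and proves separately.

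The gap is \emph{existence} for $\gamma \in (0,1)$. Your Picard/truncation scheme requires locally Lipschitz coefficients, which fails at $v=0$ when $\gamma<1$; both your uniqueness argument and your non-explosion argument presuppose a solution to work with. To make your route complete you would need an independent existence step: for instance, mollify the H\"older drift into Lipschitz approximations, use the energy bound (which is uniform in the mollification parameter since it only relies on $Z\cdot F(t,V) \geq 0$) to obtain tightness of the approximating family, extract a weak solution to the original equation, and then combine this with your pathwise uniqueness via Yamada--Watanabe to upgrade to strong existence. This is feasible, but in the degenerate setting it is not a routine one-liner and should be spelled out.
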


\begin{proof}
	In the case $\gamma >1$, the coefficients of the SDE \eqref{eq: Z_mB} are locally Lipschitz continuous with respect to the space variable, uniformly in time. By a standard localization argument as mentioned in \cite[p. $417$, after $(13)$]{KallenbergFoundationsModernProbability2002}, there exists a unique solution up to explosion. We also refer to \cite[Theorem $2.8$ p. $154$]{MAO2011147} for a proof of the existence of a solution, up to explosion, in the more general case of path-dependent SDEs. \\
	
	Assume now that $\gamma \leq 1$. We will use \citer[Theorem 1]{HonoreStrongregularizationBrownian2018}. Keeping the same notations, we have in our case, for any $(x,v) \in \mathbb{R}^2$ and $t \geq t_0$, $F_1(t,v,x) := - \sgn(v) |v|^\gamma t^{-\beta} - x$, $ F_2(t,v,x) := v$ and $\sigma (t,v,x) =1$. Assumptions \textbf{(ML)} and  \textbf{(UE)} in \cite{HonoreStrongregularizationBrownian2018} are obviously satisfied. Let us now remark that $F_1$ is $\gamma$-Hölder with respect to $v \in \mathbb{R}$ uniformly with respect to $t\geq t_0$ and $x \in \mathbb{R}$, and is Lipschitz continuous with respect to $x$, uniformly with respect to $t$ and $v$. With the notations used in \cite{HonoreStrongregularizationBrownian2018}, we have $\beta_1 = \gamma$ and $\beta_2 = 1$. Thus, Assumption $\bm{(T_{\beta})}$ is satisfied. Finally, we check that Assumption $\bm{(H_{\eta})}$ is satisfied. Since $ \partial_{v} F_2 = 1$,  we can conclude, taking $\eta$ small enough and  $\mathcal{E}_1 = \{1\}$, that there exists a unique strong solution to \eqref{eq: confined_mB}.
	
\end{proof}

\subsection{Moment estimates and non-explosion}
In this section, we state and prove moment estimates of $Z$. It will be useful to control some stochastic terms appearing later.
For all $n\geq0$, define the stopping time
\begin{equation*}\label{stopping_time}
	\tau_n:=\inf \{ t\geq t_0, \ \absd{Z_t}\geq n\}.
\end{equation*}
Set $\tau_{\infty}:= \lim_{n\to +\infty}\tau_n$ the explosion time of $Z$.
\begin{prop}\label{prop: moment} The explosion time of $Z$ is a.s.\ infinite and, for all $\kappa \geq 0$ and $t \geq t_0$
	\begin{equation}\label{moment}
		\mathbb{E}\left[ \absd{Z_{t}}^{\kappa}\right] \leq  C_{\kappa, t_0}t^{\frac{\kappa}{2}}.
	\end{equation}
\end{prop}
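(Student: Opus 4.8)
The plan is to work with the rotated process $Y_t = e^{-tA}Z_t$, since rotations are isometries and hence $\absd{Y_t} = \absd{Z_t}$; it therefore suffices to bound $\mathbb{E}[\absd{Y_{t\wedge\tau_n}}^\kappa]$ uniformly in $n$ and then pass to the limit. From \eqref{EDS_Y} we have $Y_t = Y_{t_0} + \int_{t_0}^t e^{-sA}\Gamma\,\mathrm{d}W_s - \int_{t_0}^t e^{-sA}F(s,V_s)\,\mathrm{d}s$. First I would apply Itô's formula to $\absd{Y_t}^2$ (or, to get all $\kappa\ge 0$ at once, to a smooth regularization of $\absd{Y_t}^\kappa$, e.g. $(\delta + \absd{Y_t}^2)^{\kappa/2}$ and then let $\delta\to 0$). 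The crucial sign observation is that the drift term contributes $-2\,Y_t\cdot e^{-tA}F(t,V_t)\,\mathrm{d}t$; writing $Y_t = e^{-tA}Z_t$ this equals $-2\,Z_t\cdot F(t,V_t)\,\mathrm{d}t = -2\,V_t\,\sgn(V_t)|V_t|^\gamma t^{-\beta}\,\mathrm{d}t = -2\,t^{-\beta}|V_t|^{\gamma+1}\,\mathrm{d}t \le 0$. So the frictional force is dissipative and can simply be dropped from the upper bound. What remains is the free-oscillator part with an additive Brownian noise in one coordinate: $\mathrm{d}\absd{Y_t}^2 \le 2\,Y_t\cdot e^{-tA}\Gamma\,\mathrm{d}W_t + \mathrm{d}t$ (the $\mathrm{d}t$ coming from the quadratic variation of the single Brownian component; the bracket of $e^{-tA}\Gamma\,\mathrm{d}W$ has trace $1$).

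Next I would take expectations. After localizing at $\tau_n$ the stochastic integral is a true martingale with zero mean, giving $\mathbb{E}[\absd{Y_{t\wedge\tau_n}}^2] \le \absd{z_0}^2 + (t-t_0) \le \absd{z_0}^2 + t$, which already yields \eqref{moment} for $\kappa=2$ with a constant depending on $t_0$ (via $\absd{z_0}$; note that on any finite horizon $t\ge t_0$ we absorb $\absd{z_0}^2$ into $C_{\kappa,t_0}t$, using $t\ge t_0$). Letting $n\to\infty$ via Fatou shows $\mathbb{E}[\absd{Z_t}^2]<\infty$, and monotone convergence of $\tau_n\uparrow\tau_\infty$ together with the uniform bound forces $\mathbb{P}(\tau_\infty \le t)=0$ for every $t$, hence $\tau_\infty = +\infty$ a.s. For general $\kappa$, I would instead run Itô on $\phi(\absd{Y_t}^2)$ with $\phi(x)=(\delta+x)^{\kappa/2}$: the drift from $F$ stays nonpositive (its sign is unchanged by the increasing function $\phi$), the noise gives a martingale after localization, and the Itô correction produces terms bounded by $C_\kappa(\delta+\absd{Y_t}^2)^{\kappa/2-1}$ — i.e. a term controlled by $\mathbb{E}[(\delta+\absd{Y_t}^2)^{\kappa/2}]^{1-2/\kappa}$ when $\kappa\ge 2$ — so that Grönwall's lemma (in the integral form, after dividing appropriately) closes the estimate and gives $\mathbb{E}[\absd{Y_t}^\kappa]\le C_{\kappa,t_0}t^{\kappa/2}$. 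For $0\le\kappa<2$ the bound follows from the case $\kappa=2$ by Jensen's inequality.

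The main technical obstacle is the bookkeeping around the singular/non-smooth drift: $F$ is only $\gamma$-Hölder in $v$ when $\gamma<1$, and $\absd{\cdot}^\kappa$ is not $C^2$ at the origin for $\kappa<2$. Neither is a genuine problem — the Hölder drift only enters through its sign in the dissipative term, and the origin is handled by the $\delta$-regularization — but it requires a careful limiting argument (first localize at $\tau_n$, then send $\delta\to 0$ by dominated/monotone convergence using the already-established $\kappa=2$ bound, then send $n\to\infty$ by Fatou). A secondary point is making the dependence of the constant on $t_0$ explicit: since the estimate is claimed for $t\ge t_0$, one uses $t_0\le t$ to absorb the initial datum and the lower endpoint of the time integral, so $C_{\kappa,t_0}$ depends on $t_0$ and on $\absd{z_0}$ only through a harmless combination. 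I do not expect the Grönwall step to be delicate because the nonlinearity has the right (sublinear in $\absd{Y}^\kappa$) growth once $F$ is discarded.
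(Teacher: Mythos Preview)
Your proposal is correct and follows essentially the same route as the paper. The paper works directly with $Z$ rather than passing to $Y$, using the skew-symmetry $z\cdot Az=0$ to kill the potential term (which is exactly what your rotation achieves), and makes the same key sign observation $Z_s\cdot F(s,V_s)=|V_s|^{\gamma+1}s^{-\beta}\ge 0$ to discard the friction. The only notable difference is the handling of $\kappa>2$: instead of your $(\delta+\absd{Y}^2)^{\kappa/2}$ regularization plus a Bihari-type closure (what you call ``Gr\"onwall after dividing''), the paper simply applies It\^o to $\absd{Z}^\kappa$ (which is already $\mathcal{C}^2$ when $\kappa>2$) and bounds the resulting term $\int_{t_0}^t C_\kappa\,\mathbb{E}[\absd{Z_s}^{\kappa-2}]\dd s$ by induction on $\kappa$ in steps of two, starting from the $\kappa\in[0,2]$ case obtained via Jensen. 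This bootstrap is a bit cleaner since it avoids the $\delta$-regularization and any nonlinear integral inequality.
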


\begin{remark}\label{rqmoment}
	Let us mention that the moment estimate obtained for the position process $X$ is a priori smaller in our case than in the free potential case \cite{GradinaruAsymptoticbehaviortimeinhomogeneous2021a}. It is explained by the confining effect of the quadratic potential.
\end{remark}
\begin{proof}The proof is adapted from \cite{GradinaruAsymptoticbehaviortimeinhomogeneous2021a} to two-dimensional processes. For the sake of completeness, we sketch the proof in our context. \\
	Using Itô's formula applied to the function $f:(x,v) \mapsto x^2 + v^2$ and the fact that for all $z \in \mathbb{R}^2$, $z \cdot Az = 0$, we deduce that, for all $t\geq t_0$,
	\[
	\absd{Z_{t\wedge \tau_n}}^2
	\leq
	\absd{z_0}^2+ \int_{t_0}^{t}2\mathbb{1}_{\{s\leq \tau_n\}} Z_s \cdot (\Gamma \dd W_s)-\int_{t_0}^{t\wedge \tau_n} 2Z_s\cdot F(s,V_s)\dd s
	+({t}-t_0).
	\]
	Remark that for any $s \geq t_0$, $Z_s\cdot F(s,V_s) = V_s\sgn(V_s)\abs{V_s}^{\gamma}s^{-\beta} \geq 0$.	Taking expectation yields
	\[ \mathbb{E}\left[ \absd{Z_{t\wedge \tau_n}}^2 \right]\leq \absd{z_0}^2+(t-t_0) \leq C_{t_0}t.\]
	Thanks to \cref{explosion}, we can conclude that the explosion time of $Z$ is a.s.\ infinite.
	Set $\kappa\in [0,2] $, so, by Jensen's inequality and Fatou's lemma
	\begin{equation}\label{estimates_confined}
		\mathbb{E}\left[ \absd{Z_{t\wedge \tau_{\infty}}}^{\kappa}\right]\leq \left(\mathbb{E}\left[ \absd{Z_{t\wedge \tau_{\infty}}}^{2}\right] \right)^{\frac{\kappa}{2}} \leq \left( \liminf_{n\to \infty}\mathbb{E}\left[ \absd{Z_{t\wedge \tau_n}}^2 \right] \right)^{\frac{\kappa}{2}} \leq  C_{\kappa, t_0}t^{\frac{\kappa}{2}}.
	\end{equation}
	This leads to \eqref{moment}.\\
	
	When $\kappa > 2$, $v\mapsto \absd{v}^{\kappa}$ is a $\mathcal{C}^2$-function, so by Itô's formula, for all $t\geq t_0$,
	\begin{multline*}
		\absd{Z_{t\wedge \tau_n}}^{\kappa} \leq \absd{z_0}^{\kappa}+ \int_{t_0}^{t\wedge \tau_n} \kappa \absd{Z_s}^{\kappa-2}Z_s\cdot (\Gamma\dd W_s) - \int_{t_0}^{t\wedge \tau_n} \kappa \absd{Z_s}^{\kappa-2} Z_s\cdot F(s,V_s)	\dd s \\+\int_{t_0}^{t\wedge \tau_n}C_{\kappa}\absd{Z_s}^{\kappa-2}\dd s.
	\end{multline*}
	In addition, it follows from the hypothesis on the sign of the drift function that
	\begin{equation}\label{Ito_confined}
		\absd{Z_{t\wedge \tau_n}}^{\kappa} \leq \absd{z_0}^{\kappa}+ \int_{t_0}^{t} \kappa \mathbb{1}_{\{s\leq \tau_n\}}\absd{Z_s}^{\kappa-2}Z_s\cdot (\Gamma\dd W_s) +\int_{t_0}^{t\wedge \tau_n}C_{\kappa}\absd{Z_s}^{\kappa-2}\dd s.
	\end{equation}
	Taking expectation in \eqref{Ito_confined}, we have
	\[	\mathbb{E}\left[\absd{Z_{t\wedge \tau_{\infty}}}^{\kappa} \right] \leq \liminf_{n\to \infty} \mathbb{E}\left[\absd{Z_{t\wedge \tau_n}}^{\kappa}   \right] \leq \absd{z_0}^{\kappa}+  \int_{t_0}^{t}C_{\kappa}\mathbb{E}\left[\absd{Z_s}^{\kappa-2}\right]\dd s.
	\]
	When $0\leq \kappa -2 \leq 2$, we can upper bound $\mathbb{E}\left[\absd{Z_s}^{\kappa-2}\right]$ by injecting \eqref{estimates_confined} and get
	\[\mathbb{E}\left[\absd{Z_{t\wedge \tau_{\infty}}}^{\kappa} \right]  \leq  \absd{z_0}^{\kappa}+  \int_{t_0}^{t}C_{\kappa, t_0}s^{\frac{\kappa-2}{2}}\dd s \leq C_{\kappa, t_0}s^{\frac{\kappa}{2}}.\]
	The method is then applied inductively to prove the inequality for all $\kappa>2$.
\end{proof}

\subsection{Asymptotic behavior of the solution}
We gather in this section the proof of \cref{ThmlongtimeY_mB}. The strategy is to prove the convergence of the finite dimensional distributions (f.d.d.) of the process $Y^{(\eps)}$, as $\eps \to 0$, and its tightness in the critical and super-critical regimes.
We first focus on the tightness.
\begin{lem}\label{lem: tight}
	If $2q\geq 1$, then the family $\left\{(\sqrt{\eps}Y_{t/\eps})_{t\geq \eps t_0 }, \ \eps >0\right\}$ is tight on every compact interval $[m,M]$, with $0<m\leq M$.
\end{lem}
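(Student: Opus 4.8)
The plan is to prove tightness in $\mathcal{C}([m,M],\mathbb{R}^2)$ via the standard Kolmogorov criterion, controlling increments of the rescaled process $U^{(\eps)}_t := \sqrt{\eps}\, Y_{t/\eps}$ by a bound of the form $\mathbb{E}\bigl[\absd{U^{(\eps)}_t - U^{(\eps)}_s}^{\kappa}\bigr] \leq C\,|t-s|^{1+\delta}$ for suitable $\kappa$ and $\delta>0$, uniformly in $\eps$, plus uniform control of $\mathbb{E}\bigl[\absd{U^{(\eps)}_m}^{\kappa}\bigr]$. The starting point is the representation \eqref{EDS_Y}: since $Y_t = Y_{t_0} + \int_{t_0}^t e^{-uA}\Gamma\,\dd W_u - \int_{t_0}^t e^{-uA}F(u,V_u)\,\dd u$, for $s<t$ in $[m,M]$ and $\eps$ small we can write
\[
U^{(\eps)}_t - U^{(\eps)}_s = \sqrt{\eps}\int_{s/\eps}^{t/\eps} e^{-uA}\Gamma\,\dd W_u - \sqrt{\eps}\int_{s/\eps}^{t/\eps} e^{-uA}F(u,V_u)\,\dd u =: M^{(\eps)}_{s,t} + D^{(\eps)}_{s,t}.
\]
Each term is handled separately, and since $\kappa\geq 2$ one uses $\absd{a+b}^\kappa \leq 2^{\kappa-1}(\absd{a}^\kappa + \absd{b}^\kappa)$.

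For the martingale part $M^{(\eps)}_{s,t}$, since $e^{-uA}$ is a rotation (hence orthogonal) and $\Gamma$ has operator norm $1$, the quadratic variation over $[s/\eps,t/\eps]$ is bounded by $(t-s)/\eps$, so Burkholder–Davis–Gundy gives $\mathbb{E}\bigl[\absd{M^{(\eps)}_{s,t}}^\kappa\bigr] \leq C_\kappa\, \eps^{\kappa/2} (\,(t-s)/\eps\,)^{\kappa/2} = C_\kappa\,(t-s)^{\kappa/2}$, which is $|t-s|^{1+\delta}$ as soon as $\kappa>2$. For the drift part, I would bound $\absd{D^{(\eps)}_{s,t}} \leq \sqrt{\eps}\int_{s/\eps}^{t/\eps} \abs{V_u}^\gamma u^{-\beta}\,\dd u$ (using $\absd{e^{-uA}F(u,V_u)} = \abs{V_u}^\gamma u^{-\beta}$), then apply Hölder's inequality in $\dd u$ to pull out a factor $((t-s)/\eps)^{1-1/\kappa}$ (or simply $(t-s)/\eps$ if one is willing to spend a higher moment), leaving $\sqrt{\eps}\,(\text{time length})\cdot\bigl(\text{average of }\mathbb{E}[\abs{V_u}^{\kappa\gamma}]\,u^{-\kappa\beta}\bigr)^{1/\kappa}$ after a further expectation and Hölder. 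Here \cref{prop: moment} is crucial: $\mathbb{E}[\abs{V_u}^{\kappa\gamma}] \leq \mathbb{E}[\absd{Z_u}^{\kappa\gamma}] \leq C\, u^{\kappa\gamma/2}$, so the integrand behaves like $u^{\kappa(\gamma/2-\beta)}$. Since $2q = 2\beta/(\gamma+1)\geq 1$ means $\beta \geq (\gamma+1)/2$, hence $\beta - \gamma/2 \geq 1/2 >0$, the exponent $\gamma/2-\beta$ is $\leq -1/2$, and on $u\in[m/\eps,M/\eps]$ this is at most $(m/\eps)^{\kappa(\gamma/2-\beta)}$; multiplying by $\sqrt{\eps}\cdot((t-s)/\eps)$ and taking $\kappa$-th roots one checks the powers of $\eps$ cancel (precisely because $2q\geq 1$, i.e. $\beta \geq q = 1/2$ at criticality and $>1/2$ above) leaving a bound $C\,(t-s)$, which for $\kappa$ large enough exceeds $1$. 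Combining, $\mathbb{E}\bigl[\absd{U^{(\eps)}_t-U^{(\eps)}_s}^\kappa\bigr]\leq C\,(t-s)^{1+\delta}$ uniformly in $\eps$, $s,t\in[m,M]$.

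It remains to control the initial value: $\mathbb{E}[\absd{U^{(\eps)}_m}^\kappa] = \eps^{\kappa/2}\mathbb{E}[\absd{Y_{m/\eps}}^\kappa] = \eps^{\kappa/2}\mathbb{E}[\absd{Z_{m/\eps}}^\kappa] \leq \eps^{\kappa/2} C (m/\eps)^{\kappa/2} = C\,m^{\kappa/2}$, again by \cref{prop: moment} and the fact that $e^{-tA}$ is an isometry, so this is bounded uniformly in $\eps$. The Kolmogorov–Chentsov tightness criterion (for $\mathbb{R}^2$-valued processes on a compact interval) then yields tightness of $\{(\sqrt{\eps}\,Y_{t/\eps})_{t\geq\eps t_0}\}_{\eps>0}$ restricted to $[m,M]$. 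I expect the main obstacle to be bookkeeping the powers of $\eps$ in the drift term and choosing $\kappa$ (and the Hölder exponents) so that the exponent of $(t-s)$ strictly exceeds $1$ while all powers of $\eps$ vanish exactly when $2q\geq 1$; the degenerate structure (noise only in the second coordinate) is harmless here because after the rotation the estimates are on $\absd{Z}=\absd{Y}$ directly, and \cref{prop: moment} already absorbs the propagation of the noise into the position component.
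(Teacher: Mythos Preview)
Your proposal is correct and follows essentially the same route as the paper: Kolmogorov's criterion applied to $U^{(\eps)}=\sqrt{\eps}\,Y_{\cdot/\eps}$, with the increment split into the martingale part (handled by Burkholder--Davis--Gundy, giving $(t-s)^{\kappa/2}$) and the drift part (handled by Jensen/H\"older in $\dd u$ together with the moment bound $\mathbb{E}[\abs{V_u}^{\kappa\gamma}]\leq C u^{\kappa\gamma/2}$ from \cref{prop: moment}), plus the uniform bound on $\mathbb{E}[\absd{U^{(\eps)}_m}^\kappa]$ via $\absd{Y}=\absd{Z}$. The paper carries out the drift estimate slightly more directly---raising to the power $a>4$ first, applying Jensen to the time integral, and integrating $u^{a(\gamma/2-\beta)}$ over $[s/\eps,t/\eps]$ to obtain $C_{a,m,M}\,\eps^{a(\beta-(\gamma+1)/2)}(t-s)^{a-1}$---which makes the cancellation of the $\eps$ powers under $2q\geq 1$ transparent; your verbal description of that step (``taking $\kappa$-th roots'') is a bit garbled, but the underlying computation is the same and yields the same bound.
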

\begin{proof}
	We use the Kolmogorov criterion stated in \citer[Problem 4.11 p. 64] {KaratzasBrownianMotionStochastic1998}.\\
	Take $\eps_0$ small enough such that for all $\eps\leq \eps_0$, we have $\eps t_0 \leq m$. Fix $m\leq s\leq t \leq M$ and $a>4$.
	Define, for $t\geq \eps t_0$, the local martingale term appearing in \eqref{EDS_Y}
	\begin{equation}\label{eq: local_martingale_mb}
		M_t^{(\eps)}:=\sqrt{\eps}\int_{t_0}^{t/\eps}e^{-sA}\Gamma\dd W_s= \sqrt{\eps}\int_{t_0}^{t/\eps}\begin{pmatrix}
			- \sin(s) \\
			\cos(s)
		\end{pmatrix}\dd B_s.
	\end{equation}
	
	Using Jensen's inequality, moment estimates (see \cref{prop: moment}) and Burkholder-Davis-Gundy's inequality (see \citer[Theorem 4.4.22 p. 263]{ApplebaumLevyprocessesstochastic2009}), we have
	\[
	\begin{aligned}
		\mathbb{E}\left[\absd{Y_t^{(\eps)}-Y_s^{(\eps)}}^{a}  \right] & \leq C_{a}\mathbb{E}\left[\absd{M_t^{(\eps)}-M_s^{(\eps)} }^{a} \right] +C_{a}\mathbb{E}\left[\absd{\sqrt{\eps}\int_{s/\eps}^{t/\eps}e^{-uA}F(u,V_u)\dd u }^{a} \right]
		\\ & \leq C_{a} \mathbb{E}\left[\absd{M_t^{(\eps)}-M_s^{(\eps)} }^{a} \right] +C_{a} \eps^{1-\frac{a}{2}}(t-s)^{a-1}\mathbb{E}\left[\int_{s/\eps}^{t/\eps}\absd{F(u,V_u)}^{a}\dd u  \right] \\ & \leq C_{a} \mathbb{E}\left[\left(\mathrm{Tr}\left(\left\langle M^{(\eps)}_{ \cdot}-M_{s }^{(\eps)} \right\rangle_t \right)\right)^{a/2} \right] +C_{a}\eps^{1-\frac{a}{2}}(t-s)^{a-1}\int_{s/\eps}^{t/\eps}u^{\frac{\gamma a}{2}-\beta a}\dd u   \\ & \leq C_{a} (t-s)^{\frac{a}{2}}+C_{a,m,M}\eps^{a(\beta-\frac{\gamma+1}{2})}(t-s)^{a-1}
		\\ & \leq C_{a,m,M}(t-s)^{\frac{a}{2}}.
	\end{aligned}
	\]
	Since $\frac{a}{2}>2$ and $\beta\geq \frac{\gamma+1}{2}$ the upper bound is independent of $\eps \leq 1$.
	Furthermore, by moment estimates (\cref{prop: moment}),
	\[\sup_{\eps\leq \eps_0}\mathbb{E}\left[\absd{Y_m^{(\eps)}} \right] \leq \sqrt{m }<\infty. \]
	Thus, Kolmogorov's criterion can be applied, proving the tightness result.
\end{proof}
We will now prove the convergence of the finite-dimensional distributions of $Y^{(\eps)}$. Thanks to the previous lemma, this will yield the weak convergence on every compact set (see \citer[Theorem 13.1 p. 139]{BillingsleyConvergenceprobabilitymeasures1999}). The convergence in distribution on the whole space $\mathcal{C}$ will follow, for $2q\geq 1$, from \citer[Theorem 16.7 p. 174]{BillingsleyConvergenceprobabilitymeasures1999}, since all processes considered are continuous.
\subsubsection{Convergence of the f.d.d.\ in the super-critical regime}
Assume here that $2q>1$. Recall that $(Y_t^{(\eps)})_{t\geq\eps t_0}:=(\sqrt{\eps}Y_{t/\eps})_{t\geq \eps t_0}$.
\begin{proof}[Proof of \cref{ThmlongtimeY_mB} \textit{(i)}] ~
	\begin{steps}
		\item  We first prove the convergence of the f.d.d.\ of the local martingale term $M^{(\eps)} =: (M^{(\eps,1)}, M^{(\eps,2)})^T$ appearing in \eqref{EDS_Y}.\\
		Recall that the stochastic integral $M^{(\eps)}$ was defined in \eqref{eq: local_martingale_mb}. It is a centered Gaussian process with covariance kernel defined, for any $(s,t)\in [\eps t_0,+\infty)^2$,  by
		\begin{equation*}
			K^{(\eps)}(s,t):=\begin{pmatrix}
				\Cov(M^{(\eps)}_s)               & \Cov(M^{(\eps)}_s,M^{(\eps)}_t ) \\
				\Cov(M^{(\eps)}_t,M^{(\eps)}_s ) & \Cov(M^{(\eps)}_t)
			\end{pmatrix},
		\end{equation*}
		where
		\begin{equation*}
			\Cov(M^{(\eps)}_s,M^{(\eps)}_t )= \begin{pmatrix}
				\Cov(M^{(\eps,1)}_s,M^{(\eps,1)}_t ) & \Cov(M^{(\eps,1)}_s,M^{(\eps,2)}_t) \\ \Cov(M^{(\eps,2)}_s,M^{(\eps,1)}_t) & \Cov(M^{(\eps,2)}_s,M^{(\eps,2)}_t )
			\end{pmatrix},
		\end{equation*}
		and $\Cov(M^{(\eps)}_s)= \Cov(M^{(\eps)}_s,M^{(\eps)}_s )$.
		Thus, the convergence of the f.d.d.\ of $M^{(\eps)}$ is reduced to the study of the limit of $K^{(\eps)}$, when $\eps$
		converges to $0$. Let us fix $\eps t_0\leq s\leq t$. Using that $M^{(\eps)}$ has independent increments and by Itô's isometry, we find that
		\[ \Cov(M^{(\eps)}_s,M^{(\eps)}_t ) = \begin{pmatrix} \eps \int_{t_0}^{s/\eps} \sin(u)^2 \dd u & -\eps \int_{t_0}^{s/\eps}  \sin(u)\cos(u) \dd u \\ -\eps \int_{t_0}^{s/\eps}  \sin(u)\cos(u) \dd u &  \eps \int_{t_0}^{s/\eps}  \cos(u)^2 \dd u \end{pmatrix}. \]
		We get that, for all $ 0 < s \leq t$, 
		\[ \Cov(M^{(\eps)}_s,M^{(\eps)}_t )\quad \underset{\eps \to 0}{\longrightarrow}\quad \dfrac{1}{2}\begin{pmatrix} s & 0\\0 &  s \end{pmatrix}. \] 
		We recognize the covariance kernel of the process $\left(\mathcal{B}_{\frac{t}{2}}\right)_{t>0}$, where $\mathcal{B}$ denotes a standard Brownian motion on $\mathbb{R}^2$. Since mean and covariance functions characterize Gaussian process (see \citer[Lemma 13.1 \textit{(i)} p. 250]{KallenbergFoundationsModernProbability2002}), we have thus proved that $(M^{(\eps)}_t)_{t\geq \eps t_0}$ converges in f.d.d.\ towards $\left(\mathcal{B}_{\frac{t}{2}}\right)_{t >0}$.
		\item Pick $T>0$. We prove that
		\begin{equation*}
			\mathbb{E}\left[\sup_{\eps t_0\leq t \leq T}\absd{Y_t^{(\eps)}- M_t^{(\eps)}}\right]\quad  \underset{\eps \to 0}{\longrightarrow}\quad 0.
		\end{equation*}
		Let us fix $\eps>0$ small enough such that $\eps t_0 \leq T$. We have
		\[\sup_{\eps t_0\leq t \leq T}\absd{Y_t^{(\eps)}-M_t^{(\eps)}} \leq \sqrt{\eps}\absd{z_0} +\sqrt{\eps}\int_{t_0}^{T/\eps}\absd{e^{-sA}F(s,V_s)}\dd s.  \]
		We use moment estimates (\cref{prop: moment}) to get
		\begin{align}\label{major_espectation}
			\mathbb{E}\left[\sqrt{\eps}\int_{t_0}^{T/\eps}\absd{e^{-sA}F(s,V_s)}\dd s\right] & \notag =\mathbb{E}\left[\sqrt{\eps}\int_{t_0}^{T/\eps}\absd{F(s,V_s)}\dd s\right] \\ & \notag\leq \mathbb{E}\left[\sqrt{\eps}\int_{t_0}^{T/\eps}\abs{V_s}^{\gamma}s^{-\beta}\dd s\right] \\ & \notag\leq \sqrt{\eps}C_{\gamma, t_0}\int_{t_0}^{T/\eps}s^{\frac{\gamma}{2}-\beta}\dd s \\ & \leq C_{\gamma, t_0} (\eps^{\beta-\frac{\gamma+1}{2}}T^{\frac{\gamma}{2}-\beta+1}+\sqrt{\eps}t_0^{\frac{\gamma}{2}-\beta+1}).
		\end{align}
		Hence, setting $r:= \min(\beta-\frac{\gamma+1}{2}, \frac{1}{2} )$, which is positive by assumption, we get
		\begin{equation*}
			\mathbb{E}\left[\sup_{\eps t_0\leq t \leq T}\absd{Y_t^{(\eps)}- M_t^{(\eps)}}\right] = \underset{\eps \to 0}{O}(\eps^r).
		\end{equation*}
		We conclude the proof using \citer[Theorem 3.1 p. 27]{BillingsleyConvergenceprobabilitymeasures1999}.
	\end{steps}
\end{proof}

\subsubsection{Convergence of the f.d.d.\ in the critical and sub-critical regimes}
In this section, we consider the linear case, i.e.\ $\gamma=1$. Pick $\beta\in \left(\frac{1}{2},1\right]$. Recall that 
\[(Y_t^{(\eps)})_{t\geq\eps t_0}:=(\eps^{q}Y_{t/\eps})_{t\geq \eps t_0},\]
where $q = \frac{\beta}{\gamma +1}$.

\begin{proof}[Proof of \cref{ThmlongtimeY_mB} $(ii)$ and $(iii)$]
	Leaving out the Brownian term, the underlying ODE of our system is the following
	\begin{equation}\label{eq: ode_sscritiq}
		x''(t)+\dfrac{x'(t)}{t^{\beta}}+x(t)=0, \quad t \geq t_0.
	\end{equation}
	
	Pick the basis of solutions given in \cref{eq: resolvante } and denote by $R$ its resolvent matrix which is the solution to
	\begin{equation*}
		R_t'= \begin{pmatrix}
			0 & 1 \\ -1& -\frac{1}{t^{\beta}}
		\end{pmatrix}R_t, \quad t \geq t_0.
	\end{equation*}
	It follows by differentiating the inverse function that \begin{equation*}
		(R^{-1})_t'= - R_t^{-1} \begin{pmatrix}
			0 & 1 \\ -1& -\frac{1}{t^{\beta}}
		\end{pmatrix}, \quad t \geq t_0.
	\end{equation*}
	Using Itô's product rule for $R_t^{-1}Z_t$, the fact that the quadratic covariation of  $(R_t^{-1})_{t \geq t_0}$ and $(Z_t)_{t\geq t_0}$ is equal to zero and \eqref{eq: Z_mB} with $\gamma =1$, we get that for all $t\geq t_0$,
	\begin{align*}
		R_t^{-1}Z_t & = R_{t_0}^{-1}Z_{t_0} + \int_{t_0}^t R_s^{-1} \dd Z_s + \int_{t_0}^t (R^{-1})_s' Z_s \dd s  \\ &= R_{t_0}^{-1}Z_{t_0}
		+\int_{t_0}^tR_s^{-1}\Gamma \dd W_s.
	\end{align*}Let us define $f$ the rate of decrease of $R$ (see \cref{eq: resolvante }) by
	\begin{equation}\label{defrateofdecrease_recall}
		\forall t >0, \, f(t):= \begin{cases}
			\frac{1}{\sqrt{ t}}                               & \mbox{if }\beta=1, \\
			\exp \left(-\frac{t^{1-\beta}}{2(1-\beta)}\right) & \mbox{else.}
		\end{cases}
	\end{equation}
	Set, for $t \geq \eps t_0$,
	\begin{equation}\label{eq: widetildeM}
		\Phi_t:= \dfrac{e^{-tA}R_t}{f(t)}\quad \mbox{and}\quad \widetilde{M}^{(\eps)}_t:= \eps^q f\left(\frac{t}{\eps}\right)\int_{t_0}^{t/\eps}R_s^{-1}\Gamma \dd W_s.
	\end{equation}
	Pick $t\geq \eps t_0$.
	To study the convergence of $Y^{(\eps)}$ we decompose it into
	\begin{equation}\label{reecritureYbm}
		Y_{t}^{(\eps)}= \eps^q f\left(\frac{t}{\eps}\right)\Phi_{t/\eps}R_{t_0}^{-1}Z_0+\Phi_{t/\eps}\widetilde{M}^{(\eps)}_t.
	\end{equation}

	\begin{steps}
		\item Convergence of $\Phi$ and simplification the problem.\\
		
		Using the asymptotic expansion of the resolvent matrix (\cref{eq: resolvante }), we can write, for $t\geq \eps t_0$,
		\begin{equation*}
			\Phi_t=  I_2 +\underset{t\to \infty}{O}\left(t^{1-2\beta}\right).
		\end{equation*}
		As a consequence, since $1-2\beta<0$, $\Phi_{t/\eps}$ converges to the identity matrix $I_2$, as $\eps \to 0$ and for any $t>0$. Let us notice that, for any $t>0$, $\eps^q f\left(\frac{t}{\eps}\right)$ converges to 0, as $\eps \to 0$. We thus obtain that 
		\[\eps^q f\left(\frac{t}{\eps}\right)\Phi_{t/\eps}R_{t_0}^{-1}Z_0 \quad  \underset{\eps \to 0}{\longrightarrow} \quad 0.\]
		Therefore, we can forget the first term appearing in the decomposition \eqref{reecritureYbm} of $Y^{(\eps)}$ (see \citer[Theorem 3.1 p. 27]{BillingsleyConvergenceprobabilitymeasures1999}). It is thus enough to prove the convergence of the f.d.d.\ of the centered Gaussian process $(\Phi_{t/\eps}\widetilde{M}^{(\eps)}_t)_t$ to deduce the convergence of the f.d.d.\ of $Y^{(\eps)}$ towards the same limit. 
		
		\item Computation of the covariance kernel of $(\Phi_{t/\eps}\widetilde{M}^{(\eps)}_t)_t$ and convergence in f.d.d.\ \\

	We have, for all $\eps t_0 \leq s \leq t < + \infty$,
		\begin{equation}\label{expres_convariance_mb}\begin{aligned}
				\Cov\left(\Phi_{s/\eps} \widetilde{M}_s^{(\eps)}, \Phi_{t/\eps} \widetilde{M}_t^{(\eps)}\right)= \Phi_{s/\eps} \Cov(\widetilde{M}_{s}^{(\eps)},\widetilde{M}_{t}^{(\eps)})  \Phi_{t/\eps}^T
			\end{aligned}. \end{equation}
		Using the expression of the Wronskian obtained in \cref{eq: resolvante }, we obtain, for all $t\geq \eps t_0$,
		\begin{equation*}
			\widetilde{M}^{(\eps)}_t = \eps^q f(t/\eps)\int_{t_0}^{t/\eps}f(u)^{-2}\begin{pmatrix}
				-y_2(u) \\ y_1(u)
			\end{pmatrix} \dd B_u.
		\end{equation*}
		It is a centered Gaussian process and for any $\eps t_0 \leq s \leq t$, we have
		\begin{equation*}
			\Cov(\widetilde{M}^{(\eps)}_s,\widetilde{M}^{(\eps)}_t )= \eps^{\beta} f(t/\eps)f(s/\eps)\int_{t_0}^{s/\eps} f(u)^{-4}\begin{pmatrix}
				y_2^2(u) & -y_2(u)y_1(u) \\ -y_2(u)y_1(u) & y_1^2(u)
			\end{pmatrix}\dd u.
		\end{equation*}
		Using the asymptotic expansion of the solutions and \cref{lemmaintegralasymptoticexpansion}, we get, for all $\eps t_0 < s \leq t$,
	
		\begin{multline*}
			\Cov(\widetilde{M}^{(\eps)}_s,\widetilde{M}^{(\eps)}_t )=\eps^{\beta} f(t/\eps)f(s/\eps)\int_{t_0}^{s/\eps} f(u)^{-2}\begin{pmatrix}
				\sin^2(u) & -\sin(u)\cos(u) \\ -\sin(u)\cos(u) & \cos^2(u)
			\end{pmatrix}\dd u \\+ \underset{\eps \to 0}{O}\left(\eps^{2\beta-1}f(t/\eps) f(s/\eps)^{-1}  \right).
		\end{multline*}
		Moreover, using asymptotic expansions of these integrals (see Lemmas \ref{lemma_average_periodic2} and \ref{lemmaintegralasymptoticexpansion}),
		\begin{multline*}
			\eps^{\beta} f(t/\eps)f(s/\eps)\int_{t_0}^{s/\eps}f(u)^{-2}\cos^2(u)\dd u =  \eps^{\beta} f(t/\eps)f(s/\eps)\dfrac{1}{2}\int_{t_0}^{s/\eps}f(u)^{-2}\dd u \\+\underset{\eps \to 0}{o}\left( f(t/\eps)f(s/\eps)^{-1}\right).
		\end{multline*}
		The same equality holds for
		\[\eps^{\beta} f(t/\eps)f(s/\eps)\int_{t_0}^{s/\eps}f(u)^{-2}\sin^2(u)\dd u, \]
		and we have
	
		\begin{equation*}
			\eps^{\beta} f(t/\eps)f(s/\eps)\int_{t_0}^{s/\eps}f(u)^{-2}\cos(u)\sin(u)\dd u =\underset{\eps \to 0}{o}\left(f(t/\eps)f(s/\eps)^{-1} \right).
		\end{equation*}
		Thanks to \cref{lemmaintegralasymptoticexpansion}, this leads to
		\begin{equation*}
			\begin{aligned}
				\Cov(\widetilde{M}^{(\eps)}_s,\widetilde{M}^{(\eps)}_t ) & = \left[\dfrac{1}{2} \eps^{\beta} f(t/\eps)f(s/\eps)\int_{t_0}^{s/\eps}f(u)^{-2}\dd u \right]I_2 + \underset{\eps \to 0}{o}\left(f(t/\eps)f(s/\eps)^{-1} \right) \\ & = k_{\beta}
				\frac{f(t/\eps)}{f(s/\eps)}s^{\beta}I_2
				+\underset{\eps \to 0}{o}\left( f(t/\eps)f(s/\eps)^{-1} \right),
			\end{aligned}
		\end{equation*}
		where
		\begin{equation*}
			k_{\beta}:=\begin{cases}
				\frac{1}{4} & \mbox{ if }\beta =1, \\
				\frac{1}{2} & \mbox{ else. }
			\end{cases}
		\end{equation*}
		
		It follows from the definition of $f$ given in \eqref{defrateofdecrease_recall} and the fact that $s \leq t$ that  \begin{equation*}
			\Cov(\widetilde{M}^{(\eps)}_s,\widetilde{M}^{(\eps)}_t ) \underset{\eps \to 0}{\longrightarrow} \begin{cases}
				\frac{1}{4}\frac{(s\wedge t)^2}{\sqrt{st}} I_2 & \mbox{ if }\beta =1, \\
				\frac{1}{2} s^\beta \mathbb{1}_{\{s=t\}} I_2& \mbox{ else. }
			\end{cases}
		\end{equation*}
		
		Using the preceding convergence, \eqref{expres_convariance_mb} and Step $1$, we have proved the convergence of the f.d.d.\ of $Y^{(\eps)}$. Note that whenever $2q = 1$, i.e.\ $\beta =1$ since $\gamma =1$, we recognize the covariance kernel of the process $\left(\frac{1}{\sqrt{2t}}\int_{0}^t\sqrt{s}\dd \mathcal{B}_{s}\right)_{t>0}$, where $\mathcal{B}$ denotes a standard Brownian motion on $\mathbb{R}^2$.
		
	\end{steps}
\end{proof}
\begin{remark}
	\begin{itemize}
		\item The proof relies on the asymptotic expansion of the resolvent matrix of \eqref{eq: ode_sscritiq}. We were able to prove it only for $\beta \in \left(\frac{1}{2},1\right]$. However, if $\beta =0$, the resolvent matrix is explicit and following the same lines, we can prove that $\left(Z_{t/\eps}\right)_{t\geq \eps t_0}$ converges in f.d.d.\ towards a centered Gaussian process with covariance kernel $(s,t)\mapsto \frac{1}{2}I_2\mathbb{1}_{\{s=t\}}$. This behavior can be explained by the fact that the frictional force does not decrease along time. This cancels somehow the rotation bearing, which prevents $Z^{(\eps)}$ from converging as a process when $\beta >0$.
		\item The asymptotic expansion of the resolvant matrix is also known in the super-critical regime, i.e. $\beta>1$. Therefore, one can prove the result in the linear case, i.e. $\gamma=1$, following the same lines. 
	\end{itemize}
	
\end{remark}
\subsection{Proof of \cref{Corollary_cv_Z}}

\begin{proof}[Proof of \cref{Corollary_cv_Z}]
	We start by proving the convergence in distribution of $r_{1/T}Z_T$, as $T\to +\infty$. We claim that it follows from \cref{ThmlongtimeY_mB}.	Indeed, it is enough to remark that the convergence results stated in \cref{ThmlongtimeY_mB} imply the convergence in distribution of the marginal distribution at time $t=1$ of $Y^{(\eps)}$. Let us also recall that $Z_T = e^{TA} Y_T$. Setting $T=\frac{1}{\eps}$, the convergence of $r_{1/T}Z_T$ is therefore a direct consequence of \cref{Lemma_oscillating_gaussian_convergence}.
	\\
	We now show that the rescaled process $Z^{(\eps)}$ does not converge in distribution. We do the proof only in the super-critical regime. Assume by contradiction that it is the case. Hence, each of its coordinates shall converge too. We thus have the convergence of the rescaled process $X^{(\eps)}$.
	Using \eqref{EDS_Y}, we can write
	\begin{equation*}
		\sqrt{\eps}X_{t/\eps}= \sqrt{\eps}x_0+\sqrt{\eps}\int_{t_0}^{t/\eps} \sin\left(\frac{t}{\eps}-s \right)\dd B_s-\sqrt{\eps}\int_{t_0}^{t/\eps}\sin\left(\frac{t}{\eps}-s \right)F(s,V_s)\dd s.
	\end{equation*}
	As in the proof of \cref{ThmlongtimeY_mB} (i), the last term converges in probability uniformly on compact intervals towards zero. Hence, the following term shall converge in distribution
	\begin{equation*}
		I_t^{(\eps)}:=\sqrt{\eps}\int_{t_0}^{t/\eps} \sin\left(\frac{t}{\eps}-s \right)\dd B_s.
	\end{equation*}
	The process $(I_t^{(\eps)})_{t\geq \eps t_0}$ is Gaussian, thereby its limit shall be Gaussian too and its covariance function shall converge (see \citer[Lemma 13.1 \textit{(i)} p. 250]{KallenbergFoundationsModernProbability2002}).
	However, using Itô's isometry, one can compute, for $\eps t_0\leq s \leq t$,
	\[\begin{aligned}
		\mathbb{E}\left[I_t^{(\eps)}I_s^{(\eps)} \right] & =  \eps \int_{t_0}^{s/\eps} \sin\left(\frac{t}{\eps}-u \right)\sin\left(\frac{s}{\eps}-u \right)\dd u \\ & = \eps  \frac{1}{2}\left[\cos\left(\frac{t-s}{\eps}\right)\left(\frac{s}{\eps}-t_0\right)+\frac{1}{2}\left(\sin\left(\frac{t-s}{\eps}\right)-\sin\left(\frac{t+s}{\eps} -2t_0\right)\right) \right] \\ & = \frac{1}{2}s\cos\left(\frac{t-s}{\eps}\right) + \underset{\eps \to 0}{o}(1).
	\end{aligned} \]
	This term does not converge if $s\neq t$, and that concludes the proof.
\end{proof}

\section{Study of the system driven by an $\alpha$-stable process}\label{s: stable}
In this section, $L$ is a symmetric $\alpha$-stable Lévy process. We call $\nu$ its Lévy measure, which can be written as $\nu(\dd z)= a\abs{z}^{-1-\alpha}\mathbb{1}_{\{z\neq 0\}}\dd z$ with $a >0$. As a Lévy measure, it satisfies $\int_{\mathbb{R}^*}(1\wedge z^2)\nu(\dd z)<+\infty$. 
We denote by $N$ the Poisson random measure associated with $L$ and by $\widetilde{N}$ its compensated Poisson measure. Using Lévy-Itô's decomposition (see \cite[Remark $14.6$ and Theorem $14.7$ iii)]{SatoLevyprocessesinfinitely1999}), we have, for all $t\geq0$,
\begin{equation}\label{eq:decomposition_stable}
	L_t= 	   \int_0^t\int_{\mathbb{R}^*}z\widetilde{N}(\dd s, \dd z).                                                     
\end{equation}
As in the previous section, we set, for all $t \geq t_0$ and $v \in \mathbb{R}$,
\[Z_t:=\begin{pmatrix}
	X_t \\ V_t
\end{pmatrix},\ S_t:=\begin{pmatrix}
	0 \\ L_t
\end{pmatrix},\ A:=\begin{pmatrix}
	0 & 1 \\ -1 & 0
\end{pmatrix},\ \Gamma:=\begin{pmatrix}
	0 & 0 \\ 0 & 1
\end{pmatrix}\ \mbox{and}\ F(t,v):=\begin{pmatrix} 0\\\sgn(v)\dfrac{\abs{v}^{\gamma}}{t^{\beta}}\end{pmatrix}.\]
Thereby, the system \eqref{eq: confined} can be rewritten as
\begin{equation}
	\begin{cases}\label{eq: Z}
		\dd Z_t=\Gamma\dd S_t+ AZ_t\dd t - F(t,V_t)\dd t, \\ Z_{t_0} = z_0:=(x_{0},v_{0})^T.
	\end{cases}
\end{equation}
We define, for any $t \geq t_0$, $Y_t:=e^{-tA}Z_t$. We easily check, with Itô's formula, that $Y$ is given by
\begin{equation}\label{EDS_Y_Levy}\tag{$SDE_{Y}$}
	\dd Y_t= e^{-tA}\Gamma\dd S_t -e^{-tA}F(t,V_t)\dd t.
\end{equation}
\subsection{Existence up to explosion}

\begin{theo}\label{thm_existence_stable}
	The system \eqref{eq: confined}
	admits a unique weak solution if $\gamma \in (0,1]$. If $\gamma > 1$, there exists a unique strong solution defined up to its explosion time $\tau_{\infty}$.
	
\end{theo}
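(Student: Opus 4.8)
The plan is to mirror the dichotomy used in the proof of \cref{thm_existence_brownian}, now for the jump-driven system, working throughout with the vector form \eqref{eq: Z}. For $\gamma>1$ the coefficients are locally Lipschitz and one proceeds by localization; for $\gamma\in(0,1]$ the drift is only $\gamma$-Hölder in the velocity variable, so the deterministic system may be ill-posed and one must exploit the smoothing of the stable noise propagating from the velocity component to the position one. The natural tool there is the time-inhomogeneous weak well-posedness theory for degenerate Lévy-driven SDEs of \cite{MarinoWeakwellposednessdegenerate2021}.

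\emph{Case $\gamma>1$.} The drift $z\mapsto Az-F(t,v)$ of \eqref{eq: Z} is locally Lipschitz in $z$, locally uniformly in $t\geq t_0$, with at most polynomial growth, and the jump coefficient $\Gamma$ is constant. First I would truncate $F$ outside the ball of radius $n$ to obtain globally Lipschitz coefficients; the resulting $L$-driven SDE has a unique strong solution by the classical Picard iteration for Lévy SDEs (see e.g.\ \cite{ApplebaumLevyprocessesstochastic2009}). These solutions are consistent on the stochastic intervals $[t_0,\tau_n]$, so pasting them produces a unique strong solution on $[t_0,\tau_\infty)$, with $\tau_\infty=\lim_n\tau_n$, and pathwise uniqueness on $[t_0,\tau_\infty)$ follows from the local Lipschitz property via a Gr\"onwall argument stopped at $\tau_n$. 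This is exactly the (standard) argument carried out in detail in \cite{SamorodnitskyTailssolutionscertain2003}, which I would simply cite.

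\emph{Case $\gamma\in(0,1]$.} When $\gamma=1$ the drift is Lipschitz in $(v,x)$ uniformly in $t\geq t_0$, so global strong (hence weak) well-posedness is classical. For $\gamma\in(0,1)$, the drift $F_1(t,v,x):=-\sgn(v)|v|^\gamma t^{-\beta}-x$ is only $\gamma$-Hölder in $v$, and I would invoke \cite{MarinoWeakwellposednessdegenerate2021} with the chain of length two whose first (noisy) level is the velocity $V$ and second level the position $X$: the driving process acts only on the $V$-component with symbol $\psi(\xi)=-a|\xi|^\alpha$ (the stable-like non-degeneracy), the weak Hörmander structure holds because $\partial_v(\dd X_t/\dd t)=\partial_v v=1\neq 0$ so the noise propagates to $X$, the drift of the position level is the Lipschitz map $(t,v)\mapsto v$, and $F_1$ is $\gamma$-Hölder in $v$ and Lipschitz in $x$, uniformly in $t\geq t_0$. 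Since $\nu$ is symmetric and $\alpha\in(1,2)$, the admissible Hölder threshold on the velocity drift in \cite{MarinoWeakwellposednessdegenerate2021} is met by every $\gamma>0$ — this is precisely where the symmetry of $\nu$ is used, as announced in the remark following \cref{ThmlongtimeY_stable} — while the requirement at the degenerate level is met because the position drift is Lipschitz. The cited theorem then yields a unique weak solution; it is global since for $\gamma\leq 1$ the drift grows at most linearly in $(x,v)$, so the energy computation of \cref{prop: moment} (using $z\cdot Az=0$ and the sign of $z\cdot F$) bounds $\mathbb{E}\|Z_{t\wedge\tau_n}\|^2$ linearly in $t$, ruling out explosion.

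\emph{Main obstacle.} The only genuinely delicate point is the verification, for $\gamma\in(0,1)$, that our coefficients meet all the structural hypotheses of \cite{MarinoWeakwellposednessdegenerate2021} — in particular matching our Hölder exponents $(\gamma,1)$ against their admissibility thresholds and spelling out exactly how the symmetry of $\nu$ relaxes the constraint on the velocity drift so that the whole range $\gamma\in(0,1)$ is covered when $\alpha\in(1,2)$. The localization argument for $\gamma>1$ and the non-explosion estimate are routine and parallel \cref{thm_existence_brownian} and \cref{prop: moment}.
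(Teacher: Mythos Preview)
Your proposal is correct and follows essentially the same route as the paper: for $\gamma>1$ the paper also invokes the local Lipschitz property and a standard localization argument (citing \cite{SituTheoryStochasticDifferential2005} and \cite{IkedaStochasticDifferentialEquations1981} rather than \cite{SamorodnitskyTailssolutionscertain2003}), and for $\gamma\in(0,1]$ it likewise applies \citer[Theorem~1]{MarinoWeakwellposednessdegenerate2021}, checking the same structural hypotheses with $\beta_1=\gamma$, $\beta_2=1$. Your additional remarks on the role of the symmetry of $\nu$ and on non-explosion are accurate but not needed here, since the paper defers non-explosion to \cref{esperance2}.
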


\begin{proof}
	In the case $\gamma >1$, the coefficients of the SDE \eqref{eq: Z} satisfied by $Z=(X,V)$ are locally Lipschitz continuous with respect to the space variable, uniformly in time. So we can apply \citer[Theorem $6.2.11$]{ApplebaumLevyprocessesstochastic2009} (see \citer[Theorem $119$]{SituTheoryStochasticDifferential2005} for a detailed proof), which ensures the existence of a unique solution to \eqref{eq: Z} defined up to explosion.\\
	
	Assume now that $\gamma \leq 1$. We check that we can use \citer[Theorem 1]{MarinoWeakwellposednessdegenerate2021}. Using the same notations, we have 
	\[\widetilde{A} = \begin{pmatrix}
		0 & -1 \\ 1 & 0
	\end{pmatrix},\]
	and for any $(t,x_1,x_2) \in [t_0, + \infty) \times \mathbb{R}^2$, $F_1(t,x_1,x_2)= -\sgn(x_1)|x_1|^\gamma t^{-\beta}$, $ F_2(t,x_1,x_2)=0$ and $\sigma(t,x_1,x_2) =1$. Assumptions \textbf{(UE)} and \textbf{(ND)} are clearly satisfied. Since $F_2$ does not depend on $x_1$ and since $[\widetilde{A}]_{2,1}=1$ is different from $0$, we deduce that Assumption \textbf{(H)} is satisfied. We easily check that \citer[Theorem 1]{MarinoWeakwellposednessdegenerate2021} can be applied with $\beta_1 = \gamma$, and $\beta_2=1$.
	
\end{proof}
\begin{remark} For $\alpha\in (0,2)$, employing the technique of Picard iteration and the interlacing procedure, one can deduce that \eqref{eq: Z} has a unique solution in the linear setting $\gamma=1$ (see \cite[p. 375]{ApplebaumLevyprocessesstochastic2009}).
\end{remark}

\subsection{Moment estimates and non-explosion}
Let $Z$ be the unique solution up to explosion time to \eqref{eq: Z}.
As in the continuous setting, define, for all $r\geq0$, the stopping time
\begin{equation*}\label{stopping_time_levy_confined}
	\tau_r:=\inf \{ t\geq t_0, \ \absd{Z_t}\geq r\}.
\end{equation*}
Set $\tau_{\infty}:= \lim_{r\to +\infty}\tau_r$ the explosion time of $Z$.
For the sake of simplicity, since there is no jump on the position component, for $z \in \mathbb{R}$, we shall write $Z_{s-}+z$ for $(X_{s},V_{s-}+z)$ in the following.\\
We adapt the proof of \cite{GradinaruAsymptoticbehaviortimeinhomogeneous2021} to two-dimensional processes.

\begin{prop}\label{esperance2} For any $\gamma\geq 0$ and $\beta\geq 0$, the explosion time $\tau_{\infty}$ is a.s.\ infinite and for $\kappa\in(0,\alpha)$, there exists $C_{\kappa,t_0}$ such that
	\begin{equation}\label{eq: moment 1_alpha}
		\ \forall t\geq t_0,\ \mathbb{E}\left[\absd{Z_{t}}^{\kappa}\right]\leq C_{\kappa,t_0}t^{\frac{\kappa}{\alpha}}.
	\end{equation}
\end{prop}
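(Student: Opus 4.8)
The plan is to mimic the Brownian argument of Proposition 2.5, but since $\kappa < \alpha < 2$ the test function $z \mapsto \|z\|^\kappa$ is now below the natural scale and there is no need for an inductive bootstrap: a single application of It\^o's formula for jump processes should suffice. Concretely, I would apply the It\^o formula to $f(z) = (1 + \|z\|^2)^{\kappa/2}$ (a smooth, bounded-second-derivative-on-compacts regularization of $\|z\|^\kappa$, chosen so that $f$ is $\mathcal{C}^2$ and its gradient and Hessian have the right growth) along the process $Z$ stopped at $\tau_r$. The drift generated by the transport term $AZ_t\,dt$ vanishes after contraction because $z \cdot Az = 0$, exactly as in the continuous case; the frictional term $-F(t,V_t)\,dt$ contributes a term with a favorable sign, since $\nabla f(z) \cdot F(t,v) = (\text{positive factor}) \cdot v\sgn(v)|v|^\gamma t^{-\beta} \geq 0$, so it can be dropped. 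What remains is the compensated jump integral (a local martingale, killed by the stopping time) plus the compensator of the jump part, namely
\[
\int_{t_0}^{t\wedge\tau_r}\!\!\int_{\mathbb{R}^*}\Big(f(Z_{s-}+ze_2) - f(Z_{s-}) - z\,\partial_v f(Z_{s-})\Big)\,\nu(\dd z)\,\dd s,
\]
where $e_2 = (0,1)^T$.

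The heart of the estimate is to bound this compensator. Since $\kappa \in (0,\alpha)$, the standard sub-additivity/concavity inequality for $\|z+w\|^\kappa$ together with the first-order Taylor cancellation gives, using $\int_{\mathbb{R}^*}(1\wedge |z|^2)\,\nu(\dd z) < \infty$ and the explicit $\nu(\dd z) = a|z|^{-1-\alpha}\,\dd z$, a bound of the form
\[
\int_{\mathbb{R}^*}\big|f(z'+ze_2) - f(z') - z\,\partial_v f(z')\big|\,\nu(\dd z) \;\leq\; C_\kappa\big(1 + \|z'\|\big)^{\kappa-\alpha} \;\leq\; C_\kappa,
\]
the last inequality because $\kappa - \alpha < 0$; I would split the $z$-integral into $|z|\le 1$ (use the second-order Taylor bound against $z^2\nu(\dd z)$, scaling out $\|z'\|$) and $|z| > 1$ (use $|f(z'+ze_2)-f(z')| \le C(1\wedge \|z'\|^{\kappa-?})|z|^\kappa$ type bounds against $|z|^\kappa\nu(\dd z)$, which converges since $\kappa < \alpha$). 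Taking expectations then yields
\[
\mathbb{E}\big[\,f(Z_{t\wedge\tau_r})\,\big] \;\leq\; f(z_0) + C_{\kappa}(t - t_0) \;\leq\; C_{\kappa,t_0}\,t,
\]
which is uniform in $r$; letting $r\to\infty$ via Fatou (and invoking the non-explosion criterion, e.g. \cref{explosion} or its stable analogue, to conclude $\tau_\infty = \infty$ a.s.) gives $\mathbb{E}[(1+\|Z_t\|^2)^{\kappa/2}] \le C_{\kappa,t_0}\,t$.

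Finally I would upgrade the bound $\mathbb{E}[\|Z_t\|^\kappa] \le C_{\kappa,t_0}\,t$ to the claimed $C_{\kappa,t_0}\,t^{\kappa/\alpha}$. This is where the self-similarity of the stable process enters: the $\alpha$-stable scaling means the natural size of $Z_t$ is $t^{1/\alpha}$, not $t^{1/2}$, and the crude argument above, which only used $\int(1\wedge z^2)\nu < \infty$, is lossy. The cleanest route is to redo the computation keeping track of the time variable more carefully — either run the It\^o computation with the time-changed/rescaled process suggested by the $\eps$-rescaling in the theorems, or directly estimate $\int_{t_0}^t \mathbb{E}[(1+\|Z_s\|^2)^{(\kappa-\alpha)/2}]\,\dd s$ and feed back the preliminary bound to get a self-improving integral inequality whose fixed point is $t^{\kappa/\alpha}$. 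I expect this last sharpening to be the main obstacle: one must exploit the exact $|z|^{-1-\alpha}$ tail of $\nu$ (not merely integrability of $1\wedge z^2$) to see the exponent $\kappa/\alpha$ emerge, and must handle $\kappa$ possibly small so that $\|z\|^\kappa$ is only H\"older, not $\mathcal{C}^2$, near the origin — hence the smoothed test function $(1+\|z\|^2)^{\kappa/2}$ and a careful near-origin/far-from-origin split in the $\nu$-integral. Everything else (the vanishing of the rotation term, the sign of the friction term, the martingale killing, Fatou and the passage to $\tau_\infty = \infty$) is routine and parallels \cref{prop: moment}.
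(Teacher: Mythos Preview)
Your structural outline (It\^o's formula for jumps with a smoothed test function, the rotation term $z\cdot Az$ vanishes, the friction term has a favorable sign and is dropped, martingale part killed by stopping, Fatou plus \cref{explosion} for non-explosion) matches the paper exactly. The gap is precisely where you flag it: sharpening $\mathbb{E}[\|Z_t\|^\kappa]\le C_{\kappa,t_0}\,t$ to $C_{\kappa,t_0}\,t^{\kappa/\alpha}$.

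Neither of your two suggested routes closes this gap. The ``self-improving integral inequality'' fails for a sign reason: even granting your compensator bound $\int_{\mathbb{R}^*}|\cdots|\,\nu(\dd z)\le C_\kappa(1+\|z'\|)^{\kappa-\alpha}$, the exponent $\kappa-\alpha$ is negative, so turning $\int_{t_0}^t\mathbb{E}[(1+\|Z_s\|)^{\kappa-\alpha}]\,\dd s$ into $t^{\kappa/\alpha}$ would require $\mathbb{E}[(1+\|Z_s\|)^{\kappa-\alpha}]\lesssim s^{\kappa/\alpha-1}$, which is an \emph{upper} bound on a negative moment, i.e.\ a \emph{lower} bound on $\|Z_s\|$. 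No such anti-concentration is available, and Jensen points the wrong way; you are stuck at $Ct$. The ``rescaled process'' idea is circular: writing an SDE for $\eps^{1/\alpha}Z_{t/\eps}$ introduces a factor $\eps^{-1}$ in front of the rotation drift, and uniform moment bounds for the rescaled process are exactly what you are trying to prove.

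The paper's device is to make \emph{both} the regularization parameter in the test function and the small/large jump threshold depend on the terminal time $t$. Concretely, one applies It\^o to $f_\eta(x,v)=(\eta+x^2+v^2)^{\kappa/2}$ and splits the noise at level $t^{1/\alpha}$ (small jumps compensated, large jumps left as a raw Poisson integral). The Hessian of $f_\eta$ is globally bounded by $C_\kappa\eta^{\kappa/2-1}$; with the choices $\eta=t^{2/\alpha}$ and threshold $t^{1/\alpha}$, the small-jump compensator contributes $C\eta^{\kappa/2-1}\int_{|z|<t^{1/\alpha}}z^2\,\nu(\dd z)\sim t^{\kappa/\alpha-1}$ per unit time, the large-jump part contributes $\int_{|z|\ge t^{1/\alpha}}|z|^\kappa\,\nu(\dd z)\sim t^{\kappa/\alpha-1}$ per unit time, and the initial value $f_\eta(z_0)\le C(\eta^{\kappa/2}+\|z_0\|^\kappa)\le C_{t_0}t^{\kappa/\alpha}$. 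Everything balances at $t^{\kappa/\alpha}$ after integrating over $[t_0,t]$, with no input from $\|Z_s\|$ along the way. For $\kappa\in(1,\alpha)$ the H\"older trick on the large-jump part is not quite enough and the paper bootstraps once, controlling an emerging $|V_{s-}|^{\kappa/2}$ term via the already-proved bound for exponent $\kappa/2\le 1$.
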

\begin{remark}\label{rqmoment1}
	Note that, as in the Brownian case, the moment estimates obtained for the position process $X$ is a priori smaller in our case than in the free potential case \cite{GradinaruAsymptoticbehaviortimeinhomogeneous2021}. It is explained by the confining effect of the quadratic potential.
\end{remark}
\begin{proof}
	The key idea is to slice the small and big jumps in a non-homogeneous way with respect to the characteristic scale of an $\alpha$-stable process $\jsize \mapsto \jsize^{\frac{1}{\alpha}}$.\\
	
	\noindent Pick $\jsize\geq t_0$. Using \eqref{eq:decomposition_stable}, the $\alpha$-stable symmetric Lévy driving process can be written as
	\[L_t-L_{t_0}=\int_{t_0}^t\int_{\abs{z}\leq \jsize^{\frac{1}{\alpha}}}z\widetilde{N}(\dd s, \dd z) +\int_{t_0}^t\int_{\abs{z}> \jsize^{\frac{1}{\alpha}}}z N(\dd s, \dd z). \] Indeed, the term $$ \int_{t_0}^t \int_{|z|> \xi^{\frac{1}{\alpha}}}z \nu(\dd z) \dd s$$ is equal to $0$ since $\nu$ is a symmetric measure. 
	\begin{steps}
		\item We first apply Itô's formula (see \citer[Theorem 4.4.7 p. 251]{ApplebaumLevyprocessesstochastic2009}) and estimate the expectation of each term for $\kappa\leq 1$, in order to get \eqref{eq: moment 1_alpha}.\\
	
		\noindent Fix $\eta>0$ to be chosen latter and define the $\mathcal{C}^2$-function $f:(x,v)\mapsto (\eta+x^2+v^2)^{\kappa/2}$. We use the fact that for all $y \in \mathbb{R}^2$, $y \cdot Ay = 0$, and observe that for any $s\geq t_0$ and $(x,v) \in \mathbb{R}^2$, $(x,v)^T\cdot F(s,v) =\abs{v}^{\gamma+1}s^{-\beta}\geq0$. In the sequel, we write, for simplicity, $f(Z_{s^-} + z)$ for $f(X_{s^-}, V_{s^-} +z)$. For all $t\geq t_0$, by Itô's formula, we have
		\begin{equation*}\label{eq:moment_Ito}
			f(Z_{t\wedge \tau_r})\leq  f(z_0)+M_{t}+R_{t}+S_{t},
		\end{equation*}
		where
		\begin{equation*}\label{eq: martingale}
			M_t:=\int_{t_0}^t \int_{0<\abs{z}<\jsize^{\frac{1}{\alpha}}} \mathbb{1}_{\{s\leq \tau_r\} } (f(Z_{s-}+z)-f(Z_{s-})) \widetilde{N}(\dd s, \dd z),
		\end{equation*}
		\begin{equation}\label{eq: jumps}
			R_t:=\int_{t_0}^t \int_{\abs{z}\geq \jsize^{\frac{1}{\alpha}}}\mathbb{1}_{\{s\leq \tau_r\} } (f(Z_{s-}+z)-f(Z_{s-})) N(\dd s, \dd z),
		\end{equation}
		\begin{equation}\label{eq: finite variation}
			S_t:=\int_{t_0}^t \int_{0<\abs{z}<\jsize^{\frac{1}{\alpha}}}\mathbb{1}_{\{s\leq \tau_r\} } \left[f(Z_{s-}+z)-f(Z_{s-})-\nabla f(Z_{s-}).z\right] \nu(\dd z) \dd s.
		\end{equation}
		Moreover, remark that for all $k>\alpha$,
		\begin{equation}\label{eq: levy_measure_petits}
			\int_{0<\abs{z}<\jsize^{\frac{1}{\alpha}}} \abs{z}^{k} \nu(\dd z ) = \frac{2a}{k-\alpha}\jsize^{\frac{k}{\alpha}-1},
		\end{equation}
		and for all $k<\alpha$,
		\begin{equation}\label{eq: levy_measure_ grands}
			\int_{\abs{z}\geq\jsize^{\frac{1}{\alpha}}} \abs{z}^{k} \nu(\dd z ) = \frac{2a}{\alpha-k}\jsize^{\frac{k}{\alpha}-1}.
		\end{equation}
		We estimate expectations of $M$, $R$ and $S$.\\
		To that end, we first show that the local martingale $(M_{t})_{t\geq t_0}$ is a martingale. Fix $q\geq 2$ and $r\geq0$.
		Moreover, we set
		\[ I_t(q):=\int_{t_0}^{t} \int_{0<\abs{z}<\jsize^{\frac{1}{\alpha}}}\mathbb{1}_{\{s\leq \tau_r\} } \abs{f(Z_{s-}+z)-f(Z_{s-})}^q \nu(\dd z )\dd s.\]  Thanks to Taylor-Lagrange inequality, for all $\absd{(x,v)}\leq r$ and $\abs{z}\leq \jsize^{\frac{1}{\alpha}}$, 
		\[\abs{f(x,v+z)-f(x,v)}\leq \sup\{\absd{\nabla f(y)},\ \absd{y}\in [0, r+\jsize^{\nicefrac{1}{\alpha}}] \}\abs{z}\leq C_{r,\jsize,\kappa}\abs{z},\]
		so we have
		\[ I_t(q)\leq C_{r,\jsize,\kappa}
		\int_{t_0}^{t} \int_{0<\abs{z}<\jsize^{\frac{1}{\alpha}}}\mathbb{1}_{\{s\leq \tau_r\} } \abs{z}^{q} \nu(\dd z )\dd s. \]
		Hence, it is a finite quantity, since $q\geq 2$ and \eqref{eq: levy_measure_petits} holds.
		Therefore, for $q\geq 2$, by Kunita's inequality (see \citer[Theorem 4.4.23 p. 265]{ApplebaumLevyprocessesstochastic2009}), there exists $D_q>0$ such that
		\[
		\begin{aligned}
			\mathbb{E}\left[\sup_{t_0\leq s \leq t} \abs{M_s}^q\right] \leq D_q \left(\mathbb{E}\left[ I_t(2)^{\frac{q}{2}} \right] +\mathbb{E}\left[ I_t(q)\right]\right) <+\infty.
		\end{aligned}
		\]
		Hence, by \citer[Theorem 51 p. 38]{ProtterStochasticIntegrationDifferential2005}, $M$ is a martingale.
		\newline
		We estimate now the finite variation part $S$ defined in \eqref{eq: finite variation}.
		Note that for all $(x,v)\in \mathbb{R}^2$, the Hessian matrix of $f$ is given by
		\[
		\mathrm{Hess}(f)(x,v)=\kappa (x^2+v^2+\eta)^{\frac{\kappa}{2}-1}\begin{pmatrix}
			1+(\kappa-2)\dfrac{x^2}{x^2+v^2+\eta} & (\kappa-2)\dfrac{xv}{x^2+v^2+\eta} \\(\kappa-2)\dfrac{xv}{x^2+v^2+\eta} & 1+(\kappa-2)\dfrac{v^2}{x^2+v^2+\eta}
		\end{pmatrix}.\]
		Its matrix norm is bounded by $C_{\kappa}\eta^{\frac{\kappa}{2}-1}$. \\
		Assume that $\abs{z}<\jsize^{\frac{1}{\alpha}}$.
		Using Taylor- Lagrange's inequality and injecting \eqref{eq: levy_measure_petits} we get the almost sure following bound, for all $s \geq t_0$,
		\begin{equation}\label{deriveeseconde}
			\abs{ \int_{0<\abs{z}<\jsize^{\frac{1}{\alpha}}} \left(f(Z_{s-}+z)-f(Z_{s-})-\nabla f(Z_{s-})\cdot z\right) \nu(\dd z)} \leq C_{\kappa}\eta ^{\frac{\kappa}{2}-1}\frac{2a}{2-\alpha}\jsize^{\frac{2}{\alpha}-1} .
		\end{equation}
		
		It remains to study the Poisson integral $R$ defined in \eqref{eq: jumps}. Recall that $\kappa\leq 1$. Let us note that for all $x,v,z \in \mathbb{R}$, by Hölder property of power functions, one has 
		
		\[\begin{aligned}
			\abs{f(x,v+z)-f(x,v)} & \leq \abs{\left(\eta +x^2+(v+z)^2\right)^{\frac{\kappa}{2}}-\left(x^2+(v+z)^2\right)^{\frac{\kappa}{2}}} 
			\\ &\quad +\abs{\left(x^2+(v+z)^2\right)^{\frac{\kappa}{2}}  - \left(x^2+v^2\right)^{\frac{\kappa}{2}} } 
			+\abs{\left(x^2+v^2\right)^{\frac{\kappa}{2}}-\left(\eta+x^2+v^2\right)^{\frac{\kappa}{2}}}\\& \leq 2\eta^{\frac{\kappa}{2}} + \abs{\absd{(x,v+z)}^{\kappa}-\absd{(x,v)}^{\kappa}} \\ &\leq 2\eta^{\frac{\kappa}{2}}+ \abs{z}^{\kappa}.
		\end{aligned}\]

		Injecting \eqref{eq: levy_measure_ grands}, we deduce that
		\begin{equation}\label{poissonintegral}
			\int_{\abs{z}\geq \jsize^{\frac{1}{\alpha}}}\abs{ f(Z_{s-}+z)-f(Z_{s-})} \nu(\dd z) \leq \eta^{\frac{\kappa}{2}} \frac{2a}{\alpha }\jsize^{-1}+\frac{2a}{\alpha-\kappa}\jsize^{\frac{\kappa}{\alpha}-1}.
		\end{equation}
		Moment estimate of the Poisson integral follows from \citer[Theorem 2.3.7 p. 106]{ApplebaumLevyprocessesstochastic2009}. \\
		Gathering \eqref{poissonintegral} and \eqref{deriveeseconde}, we obtain
		\begin{multline*}
			\mathbb{E}\left[\absd{Z_{t\wedge \tau_r}}^{\kappa}\right]\leq \mathbb{E}\left[f(Z_{t\wedge \tau_r})  \right] \leq \mathbb{E}\left[f(Z_{t_0})\right]+t \jsize^{-1}  \left(\eta^{\kappa/2} \frac{2a}{\alpha }+\frac{2a}{\alpha-\kappa}\jsize^{\frac{\kappa}{\alpha}}+ C_{\kappa}\eta^{\frac{\kappa}{2}-1} \frac{2a}{2-\alpha}\jsize^{\frac{2}{\alpha}}\right).
		\end{multline*}
		Choosing $\eta=t^{\frac{2}{\alpha}}$ and $\jsize=t$, we get
		\begin{equation*}\label{eq:moment_step1}
			\mathbb{E}\left[\absd{Z_{t\wedge \tau_r}}^{\kappa}\right] \leq \mathbb{E}\left[f(Z_{t_0})\right]+t^{\frac{\kappa}{\alpha}}  \left( \frac{2a}{\alpha }+\frac{2a}{\alpha-\kappa}+ C_{\kappa} \frac{2a}{2-\alpha}\right)\leq C_{\kappa, t_0}t^{\frac{\kappa}{\alpha}}.
		\end{equation*}
		Thanks to \cref{explosion}, we can conclude that the explosion time of $Z$ is a.s.\ infinite, and letting $r \to +\infty$ with Fatou's lemma, for all $\kappa\in [0,1]$,
		\begin{equation}\label{eq: kappa_1}
			\mathbb{E}\left[\absd{Z_{t}}^{\kappa}\right] \leq C_{\kappa, t_0}t^{\frac{\kappa}{\alpha}}.
		\end{equation}
		\item Pick $\kappa\in (1,\alpha)$. We estimate $R$ in another way, using again \citer[Theorem 2.3.7 p. 106]{ApplebaumLevyprocessesstochastic2009}.\\ 
		
		By the Hölder property of power function and \eqref{eq: levy_measure_ grands}, we get
		\begin{equation}\begin{aligned}\label{poissonintegralbis}
				\int_{\abs{z}\geq \jsize^{\frac{1}{\alpha}}} \abs{f(Z_{s-}+z)-f(Z_{s-})} \nu(\dd z) & \leq \int_{\abs{z}\geq \jsize^{\frac{1}{\alpha}}} \abs{2zV_{s-}+z^2}^{\frac{\kappa}{2}}\nu(\dd z) \\&\leq C_{\kappa}\left(\frac{2a}{\alpha-\kappa}\jsize^{\frac{\kappa}{\alpha}-1}+\abs{V_{s-}}^{\frac{\kappa}{2}}\frac{2a}{\alpha-\frac{\kappa}{2}}\jsize^{\frac{\kappa}{2\alpha}-1}\right).
			\end{aligned}
		\end{equation}
		Gathering \eqref{deriveeseconde} and \eqref{poissonintegralbis}, one has
		
		\begin{multline*}\label{eq: moment intermediaire}
			\mathbb{E}\left[\absd{Z_{t\wedge \tau_r}}^{\kappa}\right] \leq \mathbb{E}\left[f(Z_{t_0})\right]+t\left(C_{\kappa}\frac{2a}{\alpha-\kappa}\jsize^{\frac{\kappa}{\alpha}-1}+C_{\kappa}\eta^{\frac{\kappa}{2}-1} \frac{2a}{2-\alpha}\jsize^{\frac{2}{\alpha}-1}\right)\\+ C_{\kappa}\frac{2a}{\alpha-\frac{\kappa}{2}}\jsize^{\frac{\kappa}{2\alpha}-1}\int_{t_0}^{t}\mathbb{E}\left[\abs{V_s}^{\frac{\kappa}{2}} \right]\dd s.
		\end{multline*}
		Injecting \eqref{eq: kappa_1} applied with $\frac{\kappa}{2}$, choosing $\eta=t^{\frac{2}{\alpha}}$ and $\jsize=t$, we get
		\begin{equation*}
			\mathbb{E}\left[\absd{Z_{t\wedge \tau_r}}^{\kappa}\right] \leq C_{\kappa, t_0,\alpha}t^{\frac{\kappa}{\alpha}}.
		\end{equation*}
		
		The conclusion of the proof follows, letting $r\to +\infty$.
	\end{steps}
\end{proof}

\subsection{Asymptotic behavior of the solution}
We gather in this section the proof of \cref{ThmlongtimeY_stable}. The strategy is to prove the convergence of the f.d.d.\ of the process $Y^{(\eps)}$, and then its tightness both in the super-critical and critical regimes.
We first prove the tightness when $\alpha q \geq 1$. Recall that $q= \frac{\beta}{\gamma + \alpha -1}$.
\begin{lem}\label{lem: tight_levy}
	Assume that $\alpha q \geq 1$, then the family $\left\{(\eps^{\frac{1}{\alpha}}Y_{t/\eps})_{t\geq \eps t_0 },\ \eps >0\right\}$ is tight on every compact interval $[m,M]$, for $0<m\leq M$.
\end{lem}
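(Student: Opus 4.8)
The plan is to follow the same strategy as in \cref{lem: tight}, using a Kolmogorov-type tightness criterion, but since the driving process is now an $\alpha$-stable process with $\alpha<2$, the second moment is no longer finite and I must work with moments of order $\kappa<\alpha$. So I would invoke a tightness criterion in $\mathcal{D}$ adapted to jump processes, for instance Aldous' criterion or the moment criterion of the form $\mathbb{E}[\|Y_t^{(\eps)}-Y_s^{(\eps)}\|^{\kappa}] \leq C\,|t-s|^{1+\delta}$ together with control of a single marginal; here one picks some $\kappa \in (1,\alpha)$ and $\delta>0$ with $\kappa$ close enough to $\alpha$. First I would fix $0<m\le M$, take $\eps_0$ small so that $\eps t_0\le m$ for all $\eps\le\eps_0$, fix $m\le s\le t\le M$, and split, exactly as before,
\[
Y_t^{(\eps)}-Y_s^{(\eps)} = \left(\mathcal{M}_t^{(\eps)}-\mathcal{M}_s^{(\eps)}\right) - \eps^{\frac1\alpha}\int_{s/\eps}^{t/\eps} e^{-uA}F(u,V_u)\,\dd u,
\]
where $\mathcal{M}_t^{(\eps)} := \eps^{\frac1\alpha}\int_{t_0}^{t/\eps} e^{-uA}\Gamma\,\dd S_u$ is the stochastic integral (Wiener--Lévy integral) appearing in \eqref{EDS_Y_Levy}.

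For the drift term I would use Jensen's (or Minkowski's integral) inequality to pull the $L^\kappa$-norm inside, then the moment estimate \eqref{eq: moment 1_alpha} from \cref{esperance2}, which gives $\mathbb{E}[|V_u|^{\gamma\kappa}] \le C_{\gamma\kappa,t_0}\,u^{\gamma\kappa/\alpha}$ (valid since $\gamma<\alpha$ ensures $\gamma\kappa<\alpha$ for $\kappa$ close to $\alpha$). This yields
\[
\mathbb{E}\!\left[\Big\|\eps^{\frac1\alpha}\!\int_{s/\eps}^{t/\eps}\! e^{-uA}F(u,V_u)\,\dd u\Big\|^{\kappa}\right] \le C\,\eps^{\frac\kappa\alpha}(t-s)^{\kappa-1}\!\int_{s/\eps}^{t/\eps}\! u^{\frac{\gamma\kappa}{\alpha}-\beta\kappa}\,\dd u \le C_{m,M}\,\eps^{\kappa(\beta-\frac{\gamma+\alpha-1}{\alpha})}(t-s)^{\kappa},
\]
where the exponent of $\eps$ is $\kappa\alpha^{-1}(\beta\alpha-(\gamma+\alpha-1)) = \kappa\alpha(q - \tfrac1\alpha)\ge 0$ precisely because $\alpha q\ge1$; thus this contribution is bounded by $C_{m,M}(t-s)^{\kappa}$ uniformly in $\eps\le 1$. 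For the martingale/stochastic-integral term, since $\mathcal{M}^{(\eps)}$ is a Wiener--Lévy integral with independent increments, I would apply a Bichteler--Jacod / Kunita-type moment inequality for $\kappa<\alpha$: $\mathbb{E}[\|\mathcal{M}_t^{(\eps)}-\mathcal{M}_s^{(\eps)}\|^{\kappa}] \le C\int_{s/\eps}^{t/\eps}\!\int_{\mathbb{R}^*}\eps^{\kappa/\alpha}\|e^{-uA}\Gamma\|^{\kappa}|z|^{\kappa}\,\nu(\dd z)\,\dd u$. Here one must be slightly careful because $\int|z|^\kappa\nu(\dd z)=\infty$ for $\kappa>\alpha$ and $=\infty$ for $\kappa<\alpha$ over the whole line as well; the clean way is to exploit self-similarity of $L$ — the rescaled integral $\eps^{1/\alpha}\int_{t_0}^{t/\eps}g(u)\,\dd L_u$ is an $\alpha$-stable random variable whose scale parameter is $\big(\eps\int_{t_0}^{t/\eps}|g(u)|^\alpha\,\dd u\big)^{1/\alpha}$, and with $g$ bounded (entries of $e^{-uA}\Gamma$) this scale is $\le C(t-s)^{1/\alpha}$; then any moment of order $\kappa<\alpha$ of an $\alpha$-stable law is a finite constant times its scale to the power $\kappa$, giving $\mathbb{E}[\|\mathcal{M}_t^{(\eps)}-\mathcal{M}_s^{(\eps)}\|^{\kappa}]\le C(t-s)^{\kappa/\alpha}$.

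Combining, $\mathbb{E}[\|Y_t^{(\eps)}-Y_s^{(\eps)}\|^{\kappa}] \le C_{m,M}\big((t-s)^{\kappa/\alpha}+(t-s)^{\kappa}\big) \le C_{m,M}(t-s)^{\kappa/\alpha}$ for $t-s\le M-m$ bounded. For this to feed a tightness criterion I need the exponent $\kappa/\alpha>1$, i.e.\ $\kappa>\alpha$, which clashes with $\kappa<\alpha$ — so the genuinely delicate point, and the main obstacle, is that a single-exponent Kolmogorov bound of this shape is \emph{not} enough for a pure-jump process. I would instead invoke Aldous' tightness criterion (or the Joffe--Métivier / Kurtz criterion): the estimate above with $\kappa<\alpha$ and exponent $\kappa/\alpha$, evaluated on $t-s\to0$, shows $\mathbb{E}[\|Y^{(\eps)}_{\sigma+h}-Y^{(\eps)}_\sigma\|^{\kappa}\wedge 1]\to0$ uniformly in $\eps$ and in stopping times $\sigma\le M$ as $h\to0$, which together with the compact-containment condition — obtained from $\sup_{\eps\le\eps_0}\sup_{m\le t\le M}\mathbb{E}[\|Y_t^{(\eps)}\|^{\kappa}]<\infty$, itself a consequence of \eqref{eq: moment 1_alpha}, the self-similarity bound on $\mathcal{M}^{(\eps)}$, and the drift estimate — gives tightness in $\mathcal{D}([m,M],\mathbb{R}^2)$. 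Letting $[m,M]$ exhaust $(0,+\infty)$ yields the claim. The only care needed is to choose $\kappa\in(1,\alpha)$ large enough that $\gamma\kappa<\alpha$ (possible since $\gamma<\alpha$) so that all moment estimates invoked from \cref{esperance2} apply.
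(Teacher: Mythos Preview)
Your approach is essentially the paper's: both use Aldous' criterion, split $Y^{(\eps)}$ into the Wiener--L\'evy integral $M^{(\eps)}$ plus the drift, bound the drift via the moment estimates of \cref{esperance2}, and bound the stochastic integral via self-similarity of $L$ (the paper cites \citer[Lemma 5.2]{DebusscheExistencedensitiesstablelike2013a}, which is exactly your scale-parameter computation). Your diagnosis that a single-exponent Kolmogorov bound with exponent $\kappa/\alpha<1$ cannot work is precisely why the paper goes to Aldous from the start.

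Two points to tighten. First, Aldous' criterion requires control of $Y^{(\eps)}_{\tau+\delta}-Y^{(\eps)}_\tau$ at \emph{stopping times} $\tau$, and your scale-parameter estimate is only stated for deterministic $s,t$; the paper fills this by restricting (as Billingsley's Theorem~16.10 permits) to discrete stopping times with finite range, conditioning on $\{\tau=\tau_i\}$, and applying the self-similarity bound on each deterministic interval $[\tau_i/\eps,(\tau_i+\delta)/\eps]$. Second, a slip: you want $\kappa$ \emph{small} enough (not large) so that $\gamma\kappa<\alpha$; since $\gamma<\alpha$ this is always possible with $\kappa\in(1,\alpha/\gamma)$. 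The paper avoids the constraint altogether by taking $r=\alpha/2<1$ and using concave Jensen on the drift term, so that only first moments of $|V_u|^\gamma$ are needed.
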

\begin{proof}
	We check the Aldous's tightness criterion stated in \citer[Theorem 16.10 p. 178]{BillingsleyConvergenceprobabilitymeasures1999}. 
	Let $a,\ \eta,\ T$ be positive reals. Let $\tau$ be a discrete stopping time with finite range $\mathcal{T}$, bounded by $T$ and
	fix $\delta>0$ and $\eps>0$ small enough to be chosen later. Let us define the Wiener-Lévy integral appearing in \eqref{EDS_Y_Levy} $ M^{(\eps)}$ by, for all $t \geq \eps t_0$,
	\begin{align}\label{eq: local_martingale_stable}
		\notag M_t^{(\eps)} &:=\eps^{\frac{1}{\alpha}}\int_{t_0}^{t/\eps}e^{-sA}\Gamma\dd S_s \\\notag  &= \eps^{\frac{1}{\alpha}}\int_{t_0}^{t/\eps}\begin{pmatrix}
			- \sin(s) \\
			\cos(s)
		\end{pmatrix}\dd L_s \\ &=  \eps^{\frac{1}{\alpha}}\int_{t_0}^{t/\eps} \int_{\mathbb{R}} \begin{pmatrix}
			- \sin(s) \\
			\cos(s)
		\end{pmatrix}z \widetilde{N}(\dd s, \dd z),
	\end{align}
	using the representation \eqref{eq:decomposition_stable} of $L$. Notice that it is a martingale. By the triangle inequality, one has  
	\begin{equation}\label{eq_decomp}\mathbb{E}\left[\absd{Y_{\tau+\delta}^{(\eps)}-Y_{\tau}^{(\eps)}} \right]\leq \mathbb{E}\left[\absd{M^{(\eps)}_{\tau +\delta} - M^{(\eps)}_{\tau}} \right] + \mathbb{E}\left[\eps^{\frac{1}{\alpha}}\int_{\tau/\eps}^{(\tau+\delta)/\eps}\abs{V_u}^{\gamma}u^{-\beta}\dd u \right]. \end{equation} 
	Writing $M^{(\eps)}=: \left( M^{(\eps),1}, M^{(\eps),2}  \right)^T,$ the quadratic variations of $M^{(\eps),1}$ and $ M^{(\eps),2}$ satisfy, by \cite{ApplebaumLevyprocessesstochastic2009} (see $(4.15)$ p. $257$), for all $t \geq \eps t_0$ \begin{align*}\text{Tr}\left(\left[M^{(\eps)}_t, M^{(\eps)}_t\right]\right) &:= \left[ M^{(\eps),1}_t,  M^{(\eps),1}_t\right] +  \left[ M^{(\eps),2}_t,  M^{(\eps),2}_t\right]\\ & = \int_{t_0}^{t/\eps} \int_{\mathbb{R}} \absd{\eps^{\frac{1}{\alpha}}  \begin{pmatrix}
				- \sin(u) \\
				\cos(u)
			\end{pmatrix}z}^2 \, N(\dd u, \dd z).\end{align*}
	Using Burkholder-Davis-Gundi's inequality (see \citer[Theorem $48$ p. $193$]{ProtterStochasticIntegrationDifferential2005}), we deduce that for some constant $C>0$ independent of $\tau, \delta$ and $\eps$ which may change from line to line \begin{align*}
		\mathbb{E}\left[\absd{M^{(\eps)}_{\tau +\delta} - M^{(\eps)}_{\tau}} \right] & \leq C \mathbb{E} \left[\left(\text{Tr}\left(\left[M^{(\eps)}_{\tau +\delta}, M^{(\eps)}_{\tau + \delta}\right] - \left[M^{(\eps)}_{\tau }, M^{(\eps)}_{\tau}\right] \right)\right)^{\frac{1}{2}}\right] \\ &\leq C \mathbb{E} \left[ \left( \int_{\tau / \eps}^{(\tau + \delta)/\eps} \int_{\mathbb{R}} |\eps^{\frac{1}{\alpha}}z|^2 \, N(\dd u, \dd z)\right)^{\frac{1}{2}}\right] \\ &= C  \mathbb{E} \left[\left(\left[L^{(\eps)}_{\tau +\delta}, L^{(\eps)}_{\tau + \delta}\right] - \left[L^{(\eps)}_{\tau }, L^{(\eps)}_{\tau}\right] \right)^{\frac{1}{2}}\right],
	\end{align*}
	where $(L^{(\eps)}_t)_{t\geq 0} := (\eps^{\frac{1}{\alpha}} L_{t/\eps})_{t\geq0}$ has the same distribution as $L$ by its self-similarity property. Using this and the lower-bound in Burkholder-Davis-Gundi's inequality, we deduce that 
	
	\begin{align*}
		\mathbb{E}\left[\absd{M^{(\eps)}_{\tau +\delta} - M^{(\eps)}_{\tau}} \right]  & \leq C  \mathbb{E} \left[ \sup_{0 \leq s \leq \delta} |L_{\tau + s} - L_{\tau} | \right] \\ &=   C  \mathbb{E} \left[ \sup_{0 \leq s \leq \delta} |L_{ s} | \right] \\ &=  C  \mathbb{E} \left[ \sup_{0 \leq s \leq \delta} \delta^{\frac{1}{\alpha}}|L_{s/\delta} | \right]  \\ &=  C  \delta^{\frac{1}{\alpha}}\mathbb{E} \left[ \sup_{0 \leq s \leq 1} |L_{ s} | \right],
	\end{align*}
	where the first equality stems from the strong Markov property satisfied by $L$ and the second one from the self-similarity property of $L$ again. Note that $\mathbb{E} \left[ \sup_{0 \leq s \leq 1} |L_{ s} | \right]$ is finite thanks to \cite{luschgy:hal-00085213} (see Section $3$) since $\alpha >1$.\\
	
	Since $\tau \in [m,M]$ a.s., the last term in \eqref{eq_decomp} can be handled as in \eqref{major_espectation} using moment estimates of $V$ (see \cref{esperance2}). It yields
	\[ \mathbb{E}\left[\eps^{\frac{1}{\alpha}}\int_{\tau/\eps}^{(\tau+\delta)/\eps}\abs{V_u}^{\gamma}u^{-\beta}\dd u \right] \leq C_{m,M}\eps^{\beta-\frac{\gamma+\alpha-1}{\alpha}}.\]
	
	Since $\eta >0$ and by Markov's inequality, we obtain for $\delta$ and $\eps$ small enough
	\[ \mathbb{P}\left(\absd{Y_{\tau+\delta}^{(\eps)}- Y_{\tau}^{(\eps)}}\geq a  \right) \leq \dfrac{C\delta^{\frac{1}{\alpha}}+C_{m,M}\eps^{\beta-\frac{\gamma+\alpha-1}{\alpha}}}{a} \leq \eta.
	\]
	Moreover, by Markov's inequality and the moment estimates again, we deduce that for all $t\in [m,M]$,
	\[\lim_{a\to +\infty}\limsup_{\eps\to0}\mathbb{P}\left(\absd{Y_t^{(\eps)}}\geq a \right)\leq \lim_{a\to +\infty}\limsup_{\eps\to0}\dfrac{\mathbb{E}\left[ \absd{Y_t^{(\eps)}}\right]}{a}\leq \lim_{a\to +\infty}\dfrac{Ct^{\frac{1}{\alpha}}}{a}=0. \]
	By \citer[Corollary and Theorem 16.8 p. 175]{BillingsleyConvergenceprobabilitymeasures1999}, this concludes the proof of the tightness on every compact interval of $(0,+\infty)$.
\end{proof}
We will now prove the convergence of the f.d.d.\ of $Y^{(\eps)}$. Thanks to the previous lemma, this will yield the weak convergence on every compact set (see \citer[Theorem 13.1 p. 139]{BillingsleyConvergenceprobabilitymeasures1999}) in the super-critical and critical regimes. The convergence in distribution on the whole space $\mathcal{D}$ will follow from \citer[Theorem 16.7 p. 174]{BillingsleyConvergenceprobabilitymeasures1999}, since all processes considered are càdlàg.
\subsubsection{Convergence of the f.d.d.\ in the super-critical regime}
Assume that $\alpha q >1$. Recall that $(Y_t^{(\eps)})_{t\geq\eps t_0}:=(\eps^{\frac{1}{\alpha}}Y_{t/\eps})_{t\geq \eps t_0}$.
\begin{proof}[Proof of \cref{ThmlongtimeY_stable} \textit{(i)}]~ \begin{steps}
		\item  We first prove the convergence of the f.d.d.\ of the Wiener-Lévy integral appearing in \eqref{EDS_Y_Levy}.\\
		Recall that the local martingale $M^{(\eps)}$ was defined in \eqref{eq: local_martingale_stable}.
		\item[Step 1a.]
		We begin with the convergence in distribution of $ M_{s,t}^{(\eps)}:=M_t^{(\eps)} - M_s^{(\eps)}$, for $\eps t_0\leq s \leq t$.\\ To this end, we study the characteristic function $\phi^{(\eps)}_{s,t}$ of $M_{s,t}^{(\eps)}$. Let us recall that $\psi$ denotes the characteristic exponent of $L$, and is given, for all $\xi \in \mathbb{R}$, by 
		\[\psi(\xi) = -a|\xi|^\alpha.\] 
		The characteristic function of the Wiener-Lévy integral can be computed as \citer[p. 105]{SatoLevyprocessesinfinitely1999}, hence one has, for all $\xi:=(u,v) \in \mathbb{R}^2$, \begin{align*}
			\phi^{(\eps)}_{s,t}(\xi) & =\mathbb{E} \left( \exp\left[-iu \eps^{\frac{1}{\alpha}}
			\int_{s/\eps}^{t/\eps} \sin(y)\dd L_y + iv \eps^{\frac{1}{\alpha}}
			\int_{s/\eps}^{t/\eps}\cos(y)\dd L_y\right]\right)                                  \\ &=  \mathbb{E} \left( \exp\left[i \eps^{\frac{1}{\alpha}}
			\int_{s/\eps}^{t/\eps}(-u\sin(y) + v \cos(y))\dd L_y\right]\right)                  \\ &= \exp\left( \int_{s/\eps}^{t/\eps} \psi \left( \eps^{\frac{1}{\alpha}}[-u\sin(y) + v \cos(y)]\right) \dd y \right)\\ &= \exp\left( -a \eps \int_{s/\eps}^{t/\eps} |-u\sin(y) + v \cos(y)|^\alpha \dd y \right).\end{align*}
		Using \cref{lemma_average_periodic2}, we deduce that $\phi^{(\eps)}_{s,t}(\xi)$ converges, as $\eps\to 0$, to 
		\[	\exp \left( -a(t-s) \frac{1}{2\pi}\int_0^{2\pi}\abs{-u\sin(y) + v \cos(y)}^\alpha   \dd y\right). \]
		
	\item[Step 1b.] We now compute explicitly the scale parameter of the stable limiting process.\\
		We denote by $\lambda$ the uniform probability distribution on the circle $\mathds{S}^1$. Thanks to a change of variable and the symmetry of $\lambda$, setting $\omega := \frac{\xi}{\Vert \xi \Vert}$ for $\xi = (u,v) \in \mathbb{R}^2\setminus\{0\}$,  we have
		\begin{align*}
			\frac{1}{2\pi}\int_0^{2\pi}\abs{-u\sin(y) + v \cos(y)}^\alpha  \dd y   &= \int_{\mathds{S}^1} \left\vert  \xi\cdot \lambda  \right\vert^\alpha \, \dd \lambda \\ &=\Vert\xi\Vert^\alpha \int_{\mathds{S}^1} \left\vert \omega\cdot \lambda  \right\vert^\alpha \, \dd \lambda. \end{align*}
		Since $\lambda$ is rotationally invariant, we deduce that $\int_{\mathds{S}^1} \left\vert  \omega \cdot \lambda \right\vert^\alpha \, \dd \lambda$ does not depend on $\omega \in \mathds{S}^1$. Taking $\omega = (1,0)^T$, we set
		\begin{equation}\label{constantCtilde}\widetilde{C}:= \frac{a}{2\pi}\int_{0}^{2\pi} \abs{\cos(x)}^{\alpha}\dd x.\end{equation}
		We have thus proved that, for any $\xi \in \mathbb{R}^2$,
		\[	\phi^{(\eps)}_{s,t}(\xi)\quad \underset{\eps \to 0}{\longrightarrow}\quad \exp\left( -(t-s) \widetilde{C} \Vert \xi \Vert^\alpha \right).\]
		
		Thus, the following convergence in distribution holds 
		\begin{equation}\label{proofLevycvMeps}
			M_{s,t}^{(\eps)} =M_t^{(\eps)} - M_s^{(\eps)}\quad \underset{\eps \to 0}{\Longrightarrow} \quad\mathcal{L}_{t-s}.
		\end{equation}
		Following the same lines, we show that, for any $t>0$,  
		\begin{equation}\label{proofLevycvMeps2}
			M_{t}^{(\eps)} \quad \underset{\eps \to 0}{\Longrightarrow} \quad\mathcal{L}_{t}.
		\end{equation}
		\item[Step 1c.]	We now prove the convergence in f.d.d.\ of $M^{(\eps)}$ to $\mathcal{L}$, as $\eps$ tends to $0$.\\ Let us fix $0 < t_1 \leq t_2 \leq \cdots \leq t_d$. Note that $(M_t^{(\eps)})_{t\geq \eps t_0}$ is a càdlàg process with independent increments, since the integrands in its definition are deterministic and because $L$ is a Lévy process. Thus, the random variables $(M_{t_1}^{(\eps)}, M_{t_1,t_2}^{(\eps)}, \dots, M_{t_{d-1},t_d}^{(\eps)} )$ are mutually independent. We deduce from the convergence results established in \eqref{proofLevycvMeps} and \eqref{proofLevycvMeps2}, and the fact that $\mathcal{L}$ has stationary and independent increments that 
		\[ (M_{t_1}^{(\eps)}, M_{t_1,t_2}^{(\eps)}, \dots, M_{t_{d-1},t_d}^{(\eps)} ) \quad \underset{\eps \to 0}{\Longrightarrow} \quad ( \mathcal{L}_{t_1}, \mathcal{L}_{t_2} - \mathcal{L}_{t_1}, \dots, \mathcal{L}_{t_d} - \mathcal{L}_{t_{d-1}}).\]
		The continuous mapping theorem yields the convergence in f.d.d.\ of $M^{(\eps)}$ to $\mathcal{L}$.
		\item  Pick $T> 0$. We prove that 
		\begin{equation*}
			\mathbb{E}\left[\sup_{\eps t_0\leq t \leq T}\absd{Y_t^{(\eps)}- M_t^{(\eps)}}\right]\quad \underset{\eps \to 0}{\longrightarrow} \quad 0.
		\end{equation*}
		Let us fix $\eps>0$ small enough such that $\eps t_0 \leq T$. We have
		\[\sup_{\eps t_0\leq t \leq T}\absd{Y_t^{(\eps)}-M_t^{(\eps)}} \leq \eps^{\frac{1}{\alpha}}\absd{z_0} +\eps^{\frac{1}{\alpha}}\int_{t_0}^{T/\eps}\absd{e^{-sA}F(s,V_s)}\dd s.  \]
		We use moment estimates (\cref{esperance2}) to get
		\[\begin{aligned}
			\mathbb{E}\left[\eps^{\frac{1}{\alpha}}\int_{t_0}^{T/\eps}\absd{e^{-sA}F(s,V_s)}\dd s\right] & =\mathbb{E}\left[\eps^{\frac{1}{\alpha}}\int_{t_0}^{T/\eps}\absd{F(s,V_s)}\dd s\right] \\ & \leq \mathbb{E}\left[\eps^{\frac{1}{\alpha}}\int_{t_0}^{T/\eps}\abs{V_s}^{\gamma}s^{-\beta}\dd s\right] \\ & \leq \eps^{\frac{1}{\alpha}}C_{\gamma,t_0}\int_{t_0}^{T/\eps}s^{\frac{\gamma}{\alpha}-\beta}\dd s \\ & \leq C_{\gamma,t_0} (\eps^{\beta-\frac{\gamma+\alpha -1}{\alpha}}T^{\frac{\gamma}{\alpha}-\beta+1}+\eps^{\frac{1}{\alpha}}t_0^{\frac{\gamma}{\alpha}-\beta+1}).
		\end{aligned} \]
		Hence, setting $r:= \min(\beta-\frac{\gamma+ \alpha -1}{\alpha}, \frac{1}{\alpha} )$, which is positive by assumption, we get
		\begin{equation}
			\mathbb{E}\left[\sup_{\eps t_0\leq t \leq T}\absd{Y_t^{(\eps)}- M_t^{(\eps)}}\right] = \underset{\eps \to 0}{O}(\eps^r).
		\end{equation}
		The conclusion follows from \citer[Theorem 3.1 p. 27]{BillingsleyConvergenceprobabilitymeasures1999}.
	\end{steps}
\end{proof}
\subsubsection{Convergence of the f.d.d.\ in the critical and sub-critical regime}
In this section, we consider the linear case, i.e.\ $\gamma=1$ and we assume that $\beta \in \left(\frac{1}{2},1\right]$. Recall that 
\[(Y_t^{(\eps)})_{t\geq \eps t_0}= \left(\eps^{q}Y_{t/\eps}\right)_{t\geq \eps t_0}.\]
\begin{proof}[Proof of \cref{ThmlongtimeY_stable} $(ii)$ and $(iii)$]
	The proof follows the same lines as in the Brownian setting. Leaving out the noise, recall that the underlying ODE is the following
	\begin{equation}\label{eq: ode_sscritiq2}
		x''(t)+\dfrac{x'(t)}{t^{\beta}}+x(t)=0, \quad t \geq t_0.
	\end{equation}
	We pick again the basis of solutions given by \cref{eq: resolvante }, and we still denote by $R$ its resolvent matrix and by $f$ its rate of decrease.
	Recall that it is given by 
	\begin{equation}\label{defrateofdecrease2}
		\forall t >0, \, f(t):= \begin{cases}
			\frac{1}{\sqrt{ t}}                               & \mbox{if }\beta=1, \\
			\exp \left(-\frac{t^{1-\beta}}{2(1-\beta)}\right) & \mbox{else.}
		\end{cases}
	\end{equation}
	We set, for all $t\geq \eps t_0$,
	\begin{equation}\label{eq: widetildeM_levy}
		\widetilde{M}^{(\eps)}_t:= \eps^q f(t/\eps)\int_{t_0}^{t/\eps}R_s^{-1}\Gamma \dd S_s.
	\end{equation}
	Keeping the same notations as in the Brownian case, we decompose $(Y_t)_{t\geq t_0}= (e^{-tA}Z_t)_{t\geq t_0}$ into
	\begin{equation*}
		\eps^qY_{t/\eps}= \eps^qf(t/\eps)\Phi_{t/\eps}R_{t_0}^{-1}Z_0+\Phi_{t/\eps}\widetilde{M}^{(\eps)}_t.
	\end{equation*}
	Reasoning as in the Brownian case, it remains to study the convergence of $\widetilde{M}^{(\eps)}$ since the first term converges towards $0$.
	Using the expression of the Wronskian obtained in \cref{eq: resolvante }, we obtain, for all $t\geq \eps t_0$,
	\begin{equation*}
		\widetilde{M}^{(\eps)}_t = \eps^qf(t/\eps)\int_{t_0}^{t/\eps}f(u)^{-2}\begin{pmatrix}
			-y_2(u) \\ y_1(u)
		\end{pmatrix} \dd L_u.
	\end{equation*}
	Let us fix $0<s<t$. We study the convergence in distribution of the couple $(\widetilde{M}^{(\eps)}_s, \widetilde{M}^{(\eps)}_t)$ when $\eps$ tends to $0$. The convergence in distribution of a general $d$-dimensional distribution $(\widetilde{M}^{(\eps)}_{t_1},\dots,\widetilde{M}^{(\eps)}_{t_d} )$ relies on the same computations.\\ Let us fix $(\xi_1,\xi_2) \in \mathbb{R}^2\times\mathbb{R}^2$. Using that $L$ has independent increments, the characteristic function $\widetilde{\phi}^{(\eps)}_{s,t}$ of $(\widetilde{M}^{(\eps)}_s, \widetilde{M}^{(\eps)}_t)$ is given by
	\begin{align}\label{eq_caract_function_eps}
		\notag\widetilde{\phi}^{(\eps)}_{s,t}(\xi_1,\xi_2) &= \mathbb{E} \left[\exp\left( i\eps^q \left[f\left(s/\eps\right) \xi_1 \cdot \int_{t_0}^{s/\eps} f(u)^{-2}\begin{pmatrix}
			-y_2(u) \\ y_1(u)
		\end{pmatrix} \dd L_u \right.\right.\right. \\  \notag &\hspace{5cm}\left.\left.\left. + f\left(t/\eps\right) \xi_2 \cdot \int_{t_0}^{t/\eps} f(u)^{-2}\begin{pmatrix}
			-y_2(u) \\ y_1(u)
		\end{pmatrix} \dd L_u \right]\right)\right] \\ &= \mathbb{E} \left[\exp\left( i\eps^q \left(f\left(s/\eps\right) \xi_1 + f\left(t/\eps\right) \xi_2 \right) \cdot \int_{t_0}^{s/\eps} f(u)^{-2}\begin{pmatrix}
			-y_2(u) \\ y_1(u)
		\end{pmatrix} \dd L_u \right)\right] \\ &\notag \hspace{4cm} \times  \mathbb{E} \left[\exp\left( i\eps^q f\left(t/\eps\right) \xi_2 \cdot \int_{s/\eps}^{t/\eps} f(u)^{-2}\begin{pmatrix}
			-y_2(u) \\ y_1(u)
		\end{pmatrix} \dd L_u \right)\right].
	\end{align}
	Let us recall that the characteristic exponent of $L$ is given, for all $\xi \in \mathbb{R}$, by 
	\[\psi(\xi) = -a|\xi|^\alpha.\]
	The characteristic function of the Wiener-Lévy integral can be computed as \citer[p. 105]{SatoLevyprocessesinfinitely1999}. Indeed, if $G: \mathbb{R} \to \mathbb{R}$ is a continuous function, then we have for all $z \in \mathbb{R}$ $$ \mathbb{E}\left[ \exp \left(iz \int_{s}^t G(u)  \dd L_u\right)\right] = \exp\left( -a \int_{s}^t |G(u)z|^\alpha \dd u\right).$$ Using this with $z=1$ and $$G : u \in [t_0,+\infty) \mapsto \eps^q \left(f\left(s/\eps\right) \xi_1 + f\left(t/\eps\right) \xi_2 \right) \cdot \left[ f(u)^{-2}\begin{pmatrix}
		-y_2(u) \\ y_1(u)
	\end{pmatrix} \right]$$ to compute the first expectation in \eqref{eq_caract_function_eps} and the corresponding function $G$ for the second expectation, one has \begin{multline*}
		\widetilde{\phi}^{(\eps)}_{s,t}(\xi_1,\xi_2) = \exp\left( -a\eps^{\beta}   \int_{t_0}^{s/\eps} f(u)^{-2\alpha} \left\vert \left(f\left(s/\eps\right) \xi_1 + f\left(t/\eps\right) \xi_2 \right)\cdot  \begin{pmatrix}
			-y_2(u) \\ y_1(u)
		\end{pmatrix}  \right\vert^{\alpha} \dd u \right) \\  \times \exp\left( -a\eps^{\beta} \int_{s/\eps}^{t/\eps} f(u)^{-2\alpha} \left\vert  f\left(t/\eps\right) \xi_2 \cdot  \begin{pmatrix}
			-y_2(u) \\ y_1(u)
		\end{pmatrix}  \right\vert^{\alpha} \dd u \right).
	\end{multline*}
	Using the asymptotic expansion of the resolvent matrix (\cref{eq: resolvante }), we can write, for any $u \geq t_0$, 
	\[\begin{pmatrix}
		-y_2(u) \\ y_1(u)
	\end{pmatrix} = f(u) \left[\begin{pmatrix}
		-\sin(u) \\ \cos(u)
	\end{pmatrix} + g(u)\right], \] 
	where $g: [t_0,+\infty) \to \mathbb{R}^2$ is a function satisfying for all $u \geq t_0$,
	\[|g(u)| \leq C u^{1-2\beta}.\]
	We set
	\begin{equation}\label{eq: chractfunct stable critical}
		K^{(\eps)}_1:= \exp\left( -a\eps^{\beta} \int_{t_0}^{s/\eps} f(u)^{-\alpha} \left\vert \left(f\left(s/\eps\right) \xi_1 + f\left(t/\eps\right) \xi_2 \right)\cdot \left[\begin{pmatrix}
			-\sin(u) \\ \cos(u)
		\end{pmatrix} + g(u) \right]  \right\vert^{\alpha} \dd u \right)
	\end{equation}
	and
	\begin{equation*}
		K^{(\eps)}_2:=\exp\left( -a\eps^{\beta} \int_{s/\eps}^{t/\eps} f(u)^{-\alpha} \left\vert  f\left(\frac{t}{\eps}\right) \xi_2 \cdot \left[\begin{pmatrix}
			-\sin(u) \\ \cos(u)
		\end{pmatrix} + g(u) \right] \right\vert^{\alpha} \dd u \right).
	\end{equation*}
	We thus obtain
	\begin{equation}
		\widetilde{\phi}^{(\eps)}_{s,t}(\xi_1,\xi_2)=K^{(\eps)}_1 \times K^{(\eps)}_2,
	\end{equation}

	\begin{steps}
		\item  We start by justifying that we can omit $g$ to study the limit when $\eps \to 0$. More precisely, we prove that, for all
		function $\zeta: \mathbb{R}\to \mathbb{R}^2$ such that $\absd{\zeta(\eps)}f(s/\eps)^{-1} = \underset{\eps \to 0}{O}(1)$,
		\begin{equation}\label{eq: proofstablecritical1}R^{(\eps)}:=
			\eps^{\beta} \int_{t_0}^{s/\eps} f(u)^{-\alpha}\abs{\abs{ \zeta(\eps)\cdot \left[\begin{pmatrix}
						-\sin(u) \\ \cos(u)
					\end{pmatrix} + g(u) \right]  }^{\alpha}- \abs{\zeta(\eps)\cdot \left[\begin{pmatrix}
						-\sin(u) \\ \cos(u)
					\end{pmatrix} \right]  }^{\alpha} }\dd u \underset{\eps \to 0}{\longrightarrow} 0.
		\end{equation}
	
		Thanks to the mean value theorem applied to $\lvert \,\cdot \,\rvert^{\alpha}$ (since $\alpha \geq1$), and the domination of $g$, we obtain that, for some constant $C>0$,
		\begin{equation*}
			R^{(\eps)}\leq  C\eps^{\beta} \absd{\zeta(\eps)}^{\alpha}\int_{t_0}^{s/\eps} f(u)^{-\alpha} u^{1-2\beta} \dd u=\underset{\eps \to 0}{O}(\absd{\zeta(\eps)}^{\alpha}f(s/\eps)^{-\alpha}\eps^{2\beta -1}),
		\end{equation*}
		where the last equality follows from \cref{lemmaintegralasymptoticexpansion}. This proves \eqref{eq: proofstablecritical1} since $\beta > \frac12$.
		\item We focus on the first term $K^{(\eps)}_1$ defined in \eqref{eq: chractfunct stable critical}.
		Since $f$ is decreasing, notice that 
		\[\zeta(\eps):=f\left(s/\eps\right) \xi_1 + f\left(t/\eps\right) \xi_2 = \underset{\eps \to 0}{O}(f(s/\eps)).\] 
		Then we have to study the convergence of $I^{(\eps)}$ defined by
		\begin{equation*}
			I^{(\eps)}:= a\eps^{\beta} \int_{t_0}^{s/\eps} f(u)^{-\alpha} \left\vert \left(f\left(s/\eps\right) \xi_1 + f\left(t/\eps\right) \xi_2 \right)\cdot \begin{pmatrix}
				-\sin(u) \\ \cos(u)
			\end{pmatrix}   \right\vert^{\alpha} \dd u.
		\end{equation*}
		Its limit differs according to the value of $\beta$.
		\item[Step 2a.] Assume first that $\beta=1$. Then, using the expression of $f$ (see \eqref{defrateofdecrease2}),
		\begin{equation*}
			I^{(\eps)}=a\eps^{1+\frac{\alpha}{2}} \int_{t_0}^{s/\eps} f(u)^{-\alpha} \left\vert \left(\frac{\xi_1}{\sqrt{s}}  + \frac{\xi_2}{\sqrt{t}} \right)\cdot \begin{pmatrix}
				-\sin(u) \\ \cos(u)
			\end{pmatrix}   \right\vert^{\alpha} \dd u.
		\end{equation*}
		We proved in Step 1B of the super-critical regime that there exists a constant $\widetilde{C}>0$ given in \eqref{constantCtilde} such that, for all $\zeta \in \mathbb{R}^2$,
		\begin{equation}\label{eq: widetildeC}
			\frac{a}{2\pi}\int_{0}^{2\pi} \left\vert  \zeta\cdot \begin{pmatrix}
				-\sin(u) \\ \cos(u)
			\end{pmatrix}  \right\vert^{\alpha} \dd u = \widetilde{C}\Vert \zeta\Vert^{\alpha}.
		\end{equation}
		Using \cref{lemma_average_periodic2}, we can compute the following asymptotic expansion
		\begin{equation*}
			I^{(\eps)}=\eps^{1+\frac{\alpha}{2}}\widetilde{C}\absd{\frac{\xi_1}{\sqrt{s}}  + \frac{\xi_2}{\sqrt{t}}}^{\alpha}\int_{t_0}^{s/\eps} f(u)^{-\alpha} \dd u + \underset{\eps \to 0}{o} \left(\eps^{1+\frac{\alpha}{2}} \int_{t_0}^{s/\eps} f(u)^{-\alpha} \dd u\right).
		\end{equation*}
		Therefore, it follows from \cref{lemmaintegralasymptoticexpansion} that
		\begin{equation*}\label{eq: Ieps_beta_egal1}
			K^{(\eps)}_1\quad \underset{\eps \to 0}{\longrightarrow}\quad  \exp\left(-\widetilde{C}\left(1+\frac{\alpha}{2} \right)^{-1}\absd{\frac{\xi_1}{\sqrt{s}}  + \frac{\xi_2}{\sqrt{t}}}^{\alpha} s^{1+\frac{\alpha}{2}}\right).
		\end{equation*}
		\item[Step 2b.] Let us consider now $\beta \in \left(\frac{1}{2},1\right)$. Let us notice that $I^{(\eps)}$ can be decomposed into the sum
		\begin{equation}\label{eq: proofstablecritical2}
			I^{(\eps)}= I^{(\eps)}_1 + I^{(\eps)}_2
		\end{equation} of the two following terms
		\begin{equation*}
			I^{(\eps)}_1:=a\eps^{\beta} \int_{t_0}^{s/\eps} f(u)^{-\alpha} \left\vert f\left(\frac{s}{\eps}\right) \xi_1 \cdot \begin{pmatrix}
				-\sin(u) \\ \cos(u)
			\end{pmatrix}  \right\vert^{\alpha} \dd u
		\end{equation*}
		and
		\begin{equation*}
			I^{(\eps)}_2:= a\eps^{\beta} \int_{t_0}^{s/\eps} f(u)^{-\alpha} \left[\left\vert \left(f\left(s/\eps\right) \xi_1 + f\left(t/\eps\right) \xi_2 \right)\cdot \begin{pmatrix}
				-\sin(u) \\ \cos(u)
			\end{pmatrix}   \right\vert^{\alpha} - \left\vert f\left(s/\eps\right) \xi_1 \cdot \begin{pmatrix}
				-\sin(u) \\ \cos(u)
			\end{pmatrix}  \right\vert^{\alpha} \right] \dd u.
		\end{equation*}
		
		Using again the mean value theorem and \cref{lemmaintegralasymptoticexpansion}, we get that for some positive constant $C_{\Vert \xi_1\Vert,\Vert \xi_2\Vert}$,
		\begin{equation}\label{eq: proofstablecritical3}
			|I^{(\eps)}_2| \leq  C \eps^{\beta}f\left(s/\eps\right)^{\alpha-1}  f\left(t/\eps\right)  \int_{t_0}^{s/\eps} f(u)^{-\alpha}  \dd u = \underset{\eps \to 0}{O} \left(f(t/\eps)f(s/\eps)^{-1} \right) =\underset{\eps \to 0}{o}(1),
		\end{equation}
		since $\beta<1$. Using \cref{lemma_average_periodic2}, we can compute the following asymptotic expansion of $I^{(\eps)}_1$
		\begin{equation*}
			\begin{aligned}
				I^{(\eps)}_1 & = a\eps^{\beta} f\left(\frac{s}{\eps}\right)^{\alpha} \int_{t_0}^{s/\eps} f(u)^{-\alpha} \left\vert  \xi_1 \cdot \begin{pmatrix}
					-\sin(u) \\ \cos(u)
				\end{pmatrix}  \right\vert^{\alpha} \dd u \\ &=  a\eps^{\beta} f\left(\frac{s}{\eps}\right)^{\alpha} \left[ \left(\frac{1}{2\pi}\int_{0}^{2\pi} \left\vert  \xi_1 \cdot \begin{pmatrix}
					-\sin(u) \\ \cos(u)
				\end{pmatrix}  \right\vert^{\alpha} \dd u\right) \int_{t_0}^{s/\eps} f(u)^{-\alpha} \dd u + \underset{\eps \to 0}{o} \left( \int_{t_0}^{s/\eps} f(u)^{-\alpha} \dd u\right)\right].
			\end{aligned}
		\end{equation*}
		Thanks to \eqref{eq: widetildeC} and the asymptotic expansion's results given in \cref{lemmaintegralasymptoticexpansion}, there exists an explicit constant $k_{\beta,\alpha}$ given in \cref{lemmaintegralasymptoticexpansion}, such that
		\begin{equation}\label{eq: proofstablecritical4}
			I^{(\eps)}_1 \quad \underset{\eps \to 0}{\longrightarrow}\quad  k_{\beta,\alpha}\widetilde{C}s^{\beta} \Vert \xi_1 \Vert^{\alpha}.
		\end{equation}
		Combining \eqref{eq: proofstablecritical1},  \eqref{eq: proofstablecritical2}, \eqref{eq: proofstablecritical3} and \eqref{eq: proofstablecritical4}, we have proved that $K^{(\eps)}_1$, defined in \eqref{eq: chractfunct stable critical}, converges as $\eps \to 0$ towards 
		\[ \exp\left(-k_{\beta,\alpha}\widetilde{C} s^{\beta} \Vert \xi_1 \Vert^{\alpha}\right).\]
		\item It remains to deal with the limit of $K^{(\eps)}_2$.
		Notice that 
		\[\zeta(\eps):= f\left(t/\eps\right)\xi_2= \underset{\eps \to 0}{O}\left(f\left(t/\eps\right)\right)=\underset{\eps \to 0}{O}\left(f\left(s/\eps\right)\right).\] 
		Hence, thanks to Step 1, we are reduced to study, for $r\in \{s, t \}$,
		\begin{equation}
			J^{(\eps)}_r:= a\eps^{\beta}f\left(\frac{t}{\eps}\right)^{\alpha}\int_{t_0}^{r/\eps}f(u)^{-\alpha} \left\vert  \xi_2 \cdot \begin{pmatrix}
				-\sin(u) \\ \cos(u)
			\end{pmatrix} \right\vert^{\alpha} \dd u.
		\end{equation}
		Asymptotic expansion's results (Lemmas \ref{lemma_average_periodic2} and \ref{lemmaintegralasymptoticexpansion}) and \eqref{eq: widetildeC} yield
		\begin{equation*}
			\begin{aligned}
				J_r^{(\eps)} & = \widetilde{C}\absd{\xi_2}^{\alpha}\eps^{\beta}f\left(t/\eps\right)^{\alpha}\int_{t_0}^{r/\eps}f(u)^{-\alpha}\dd u + \underset{\eps \to 0}{o}\left(f\left(t/\eps\right)^{\alpha}f\left(r/\eps\right)^{-\alpha}\right) \\ &= \widetilde{C}\absd{\xi_2}^{\alpha}k_{\beta,\alpha}r^{\beta}f\left(t/\eps\right)^{\alpha}f\left(r/\eps\right)^{-\alpha}+ \underset{\eps \to 0}{o}\left(f\left(t/\eps\right)^{\alpha}f\left(r/\eps\right)^{-\alpha}\right).
			\end{aligned}
		\end{equation*}
		Hence,
		\begin{equation}
			J_t^{(\eps)}\quad \underset{\eps \to 0}{\longrightarrow}\quad
			\widetilde{C}\absd{\xi_2}^{\alpha}k_{\beta,\alpha}t^{\beta}
		\end{equation}
		and
		\begin{equation}
			J_s^{(\eps)}\quad \underset{\eps \to 0}{\longrightarrow}\quad
			\widetilde{C}\absd{\xi_2}^{\alpha}k_{\beta,\alpha}s^{\beta}\left(\frac{s}{t}\right)^{\frac{\alpha}{2}}\mathbb{1}_{\{\beta= 1\}}.
		\end{equation}
		
		Since 
		\[K^{(\eps)}_2= \exp\left(-J_t^{(\eps)}+J_s^{(\eps)}\right),\]
		we thus obtain that, for all $0 <s \leq t$,
		\[ \widetilde{\phi}^{(\eps)}_{s,t}(\xi_1,\xi_2)\quad \underset{\eps \to 0}{\longrightarrow}\quad  \begin{cases}
			\exp\left(-k_{\beta,\alpha}\widetilde{C} s^{\beta} \Vert \xi_1 \Vert^{\alpha}\right) \exp\left(-k_{\beta,\alpha}\widetilde{C} t^{\beta} \Vert \xi_2 \Vert^{\alpha}\right)                                                                  & \mbox{if }\beta<1,  \\
			\exp\left(-k_{\beta,\alpha}\widetilde{C} \left[\absd{\frac{\xi_1}{\sqrt{s}}+\frac{\xi_2}{\sqrt{t}}}^{\alpha}s^{1+\frac{\alpha}{2}}+\absd{\xi_2}^{\alpha}t-\absd{\xi_2}^{\alpha}\left(\frac{s}{t}\right)^{\frac{\alpha}{2}}s \right]\right) & \mbox{if } \beta=1.
		\end{cases}\]
		\item We can compute the characteristic function of the process $\left(\frac{1}{\sqrt{t}}\int_{0}^t\sqrt{s}\dd \mathcal{L}_{s}\right)_{t>0}$ in the same manner, and thus recognize the limiting process in the critical regime.
	\end{steps}

\end{proof}
\begin{remark}
	\begin{itemize}
		\item As in the Brownian setting, if $\beta =0$, the resolvent matrix is explicit and following the same lines, we can prove that $\left(Z_{t/\eps}\right)_{t\geq \eps t_0}$ converges in f.d.d.\ towards the product of the measure $\mu$, whose characteristic function is given by
		\[\xi \mapsto \exp\left(-a \int_0^{+\infty}e^{-\alpha u}\absd{\xi\cdot h(u)
		}^{\alpha}\dd u\right),\]
		$h$ being an explicit periodic function depending on the resolvent matrix.
		
		\item As in the Brownian setting, since the asymptotic expansion of the resolvant matrix is also known in the super-critical regime, i.e. $\beta>1$, one can prove the result in the linear case, i.e. $\gamma=1$, following the same lines.  
	\end{itemize}

\end{remark}
\subsection{Proof of \cref{Corollary_cv_Z}}

\begin{proof}[Proof of \cref{Corollary_cv_Z}]
	We start by proving the convergence in distribution of $r_{1/T}Z_T$. Reasoning as in the Brownian setting, it follows from \cref{ThmlongtimeY_stable} that $r_{1/T} Y_T$ converges. The conclusion is a consequence of \cref{Lemma_oscillating_gaussian_convergence}, noting that the limiting distribution is invariant under rotations thanks to the expression of its characteristic function. \\
	Let us now prove that the rescaled process $Z^{(\eps)}$ does not converge in distribution. We state the proof in the super-critical regime. Assume by contradiction that it is the case. Reasoning as in the Brownian case, we prove that this implies the convergence in distribution of the process $I^{(\eps)}$ defined, for $t \geq \eps t_0$, by
	
	\[ I_t^{(\eps)}:= \eps^{\frac{1}{\alpha}}
	\int_{t_0}^{t/\eps} \sin\left( \frac{t}{\eps} - u \right)\dd L_u.\]
	In particular, for $s< t$, the random variable $I_t^{(\eps)} - I_s^{(\eps)}$ shall converge in distribution. \\
	
	\noindent Let us denote by $\phi^{(\eps)}$ the characteristic function of $I_t^{(\eps)} - I_s^{(\eps)}$, which is supposed to converge on $\mathbb{R}$. Using that $L$ has independent increments, we have   \begin{align*}
		\phi^{(\eps)}(1) & = \mathbb{E }\left[ \exp \left( \eps^{\frac{1}{\alpha}}\int_{s / \eps}^{t/\eps} \sin \left( \frac{t}{\eps} - u \right) \, \dd L_u \right)\right]   \mathbb{E }\left[ \exp \left( \eps^{\frac{1}{\alpha}}\int_{t_0}^{s/\eps} \sin \left( \frac{t}{\eps} - u \right) - \sin \left( \frac{s}{\eps} - u \right) \, \dd L_u \right)\right] \\ &=:\phi^{(\eps),1}\phi^{(\eps),2}.
	\end{align*}
	Recall that $\psi$ defined in \eqref{eq:symbol_stable}, denotes the characteristic exponent of $L$. Using a change of variables, we have in particular \begin{align*}
		\phi^{(\eps),1} & = \exp \left( \int_{s / \eps}^{t/\eps} \psi\left(\eps^{\frac{1}{\alpha}}\sin \left( \frac{t}{\eps} - u \right)\right)  \dd u \right) = \exp \left( -a\eps\int_{s / \eps}^{t/\eps} \left\vert\sin \left( \frac{t}{\eps} - u \right)\right\vert^{\alpha} \dd u \right) \\ &= \exp \left( -a\eps\int_{0}^{(t-s)/\eps} \left\vert\sin \left(u \right)\right\vert^{\alpha} \dd u \right).
	\end{align*}
	
	\cref{lemma_average_periodic2} ensures that $\phi^{(\eps),1}$ has a limit when $\eps$ converges to $0$. Similarly, we obtain
	\begin{align}\label{proofnocvZstableeq1}
		\notag\phi^{(\eps),2} & = \exp \left( \int_{t_0}^{s/\eps} \psi \left(\eps^{\frac{1}{\alpha}}\sin \left( \frac{t}{\eps} - u \right) - \sin \left( \frac{s}{\eps} - u \right)\right) \dd u  \right) \\  \notag&= \exp \left( -a\eps\int_{t_0}^{s/\eps} \left\vert\sin \left( \frac{t}{\eps} - u \right) - \sin \left( \frac{s}{\eps} - u \right)\right\vert^{\alpha} \dd u \right) \\ \notag &= \exp \left( -a2^\alpha \left|\sin \left( \frac{t-s}{2\eps}\right)\right\vert^\alpha \eps\int_{t_0}^{s/\eps} \left\vert\cos \left( \frac{t+s}{2\eps} - u \right) \right\vert^{\alpha} \dd u \right) \\  &= \exp \left( -a2^\alpha \left|\sin \left( \frac{t-s}{2\eps}\right)\right\vert^\alpha \eps\int_{\frac{t-s}{2\eps}}^{\frac{t+s}{2\eps} - t_0} \left\vert\cos \left(  u \right) \right\vert^{\alpha}  \dd u \right).
	\end{align}
	The change of variables $ u = v + \pi$ yields, for all $\eps>0$, 
	\[ \int_0^{2\pi} \left\vert\cos \left(  u \right) \right\vert^{\alpha} \sgn\left(\sin \left( \frac{t-s}{2\eps}\right) \cos \left( u \right)\right)  \dd u  = 0.\]
	Thus, \cref{lemma_average_periodic2} ensures that 
	\[\eps\int_{\frac{t-s}{2\eps}}^{\frac{t+s}{2\eps} - t_0} \left\vert\cos \left(  u \right) \right\vert^{\alpha}  \dd u \quad \underset{\eps \to 0}{\longrightarrow} \quad \frac{s}{2\pi} \int_0^{2\pi} \vert \cos(u)\vert^\alpha \dd u. \]
	
	Coming back to \eqref{proofnocvZstableeq1}, we see that $\phi^{(\eps),2}$ does not converge when $\eps$ tends to $0$. This is a contradiction.
\end{proof}
\appendix
\section{Study of the deterministic underlying ODE}\label{appendix_ode}
The deterministic ODE behind the system, i.e. without frictional force and without noise, is the following
\begin{equation}\label{eq: ode_souscritiq}
	x''(t)+\dfrac{x'(t)}{t^{\beta}}+x(t)=0, \quad t \geq t_0.
\end{equation}
The solutions form a vector space of dimension 2. Let us take two solutions $y_1$ and $y_2$ which are linearly independent. Then, we introduce the fundamental system of solutions (resolvent matrix) $R$ to \eqref{eq: ode_souscritiq} defined, for $t\geq t_0$, by
\begin{equation*}
	R_t=\begin{pmatrix}
		y_1(t) & y_2(t) \\ y_1'(t)& y_2'(t)
	\end{pmatrix}.
\end{equation*}
It satisfies, for all $t\geq t_0$,
\begin{equation*}
	R_t'= \begin{pmatrix}
		0 & 1 \\ -1& -\frac{1}{t^{\beta}}
	\end{pmatrix}R_t.
\end{equation*}
We recall that the Wronskian $w$ is defined, for all $t \geq t_0$, by 
\[w(t)=y_1(t)y_2'(t) - y_1'(t)y_2(t).\]
Let us finally set, for $t >0$,
\begin{equation}\label{eq: rateofdecreaseresolvent}
	f(t):= \begin{cases}
		\frac{1}{\sqrt{ t}}                               & \mbox{if }\beta=1, \\
		\exp \left(-\frac{t^{1-\beta}}{2(1-\beta)}\right) & \mbox{else.}
	\end{cases}
\end{equation}

\begin{lem}\label{expansion_bessel2} Pick $\beta \in \left(\frac{1}{2}, +\infty\right)$ and consider a solution $y$ to \eqref{eq: ode_souscritiq}. Then, there exist $a\in \mathbb{R}$ and $\phi \in [0,2\pi)$ such that
	\begin{equation*}
		y(t)=af(t)\cos(t+\phi)+\underset{t\to \infty}{O}\left(f(t)t^{-(2\beta-1)\wedge \beta}\right)
	\end{equation*}
	and
	\begin{equation*}
		y'(t)=-af(t)\sin(t+\phi)+\underset{t\to \infty}{O}\left(f(t)t^{-(2\beta-1)\wedge \beta}\right).
	\end{equation*}
\end{lem}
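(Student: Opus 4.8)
The plan is to remove the first-order term by a multiplicative substitution, thereby reducing \eqref{eq: ode_souscritiq} to a perturbed harmonic oscillator, and then to run a variation-of-constants argument. First, observe that the function $f$ of \eqref{eq: rateofdecreaseresolvent} is exactly the solution of $f'/f = -\tfrac12 t^{-\beta}$ (in both cases $\beta=1$ and $\beta\neq 1$). Hence, writing $y = f z$ and dividing the resulting equation by $f$, the coefficient $2f'/f + t^{-\beta}$ of $z'$ vanishes, and $z$ solves
\[ z'' + \bigl(1 + p(t)\bigr)z = 0, \qquad p(t) := \frac{f''}{f} + t^{-\beta}\frac{f'}{f} = \frac{\beta}{2}\,t^{-\beta-1} - \frac14\,t^{-2\beta}. \]
Since $\beta > \tfrac12$, one has $2\beta > 1$, so $p$ is integrable near $+\infty$ and, more precisely, $\int_t^{+\infty}\lvert p(s)\rvert\,\mathrm{d}s = \underset{t\to\infty}{O}\bigl(t^{-(2\beta-1)} + t^{-\beta}\bigr) = \underset{t\to\infty}{O}\bigl(t^{-(2\beta-1)\wedge\beta}\bigr)$.

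Next I would write $z$ in Lagrange (polar) form $z(t) = u(t)\cos t + v(t)\sin t$ with the constraint $z'(t) = -u(t)\sin t + v(t)\cos t$; then $(u,v)$ satisfies a first-order linear system whose right-hand side is bounded in norm by $2\lvert p(t)\rvert\,(\lvert u\rvert + \lvert v\rvert)$. Gronwall's inequality together with $\int^{+\infty}\lvert p\rvert < \infty$ shows that $u$ and $v$ are bounded on $[t_0,+\infty)$; feeding this back, $\lvert u'\rvert + \lvert v'\rvert \le C\lvert p\rvert$ is integrable, hence $u(t)\to u_\infty$ and $v(t)\to v_\infty$ as $t\to\infty$, with
\[ \lvert u(t)-u_\infty\rvert + \lvert v(t)-v_\infty\rvert \le C\int_t^{+\infty}\lvert p\rvert = \underset{t\to\infty}{O}\bigl(t^{-(2\beta-1)\wedge\beta}\bigr). \]
Setting $a := \sqrt{u_\infty^2 + v_\infty^2}$ and choosing $\phi \in [0,2\pi)$ with $a\cos\phi = u_\infty$, $-a\sin\phi = v_\infty$, one gets $u_\infty\cos t + v_\infty\sin t = a\cos(t+\phi)$, and therefore $z(t) = a\cos(t+\phi) + \underset{t\to\infty}{O}\bigl(t^{-(2\beta-1)\wedge\beta}\bigr)$ and $z'(t) = -a\sin(t+\phi) + \underset{t\to\infty}{O}\bigl(t^{-(2\beta-1)\wedge\beta}\bigr)$.

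Finally, undoing the substitution: $y = fz$ yields the first expansion at once, while $y' = f'z + fz' = f\bigl(-\tfrac12 t^{-\beta}z + z'\bigr)$; here the extra contribution $-\tfrac12 t^{-\beta}z$ is $O\bigl(f(t)\,t^{-\beta}\bigr)$, which is absorbed into $O\bigl(f(t)\,t^{-(2\beta-1)\wedge\beta}\bigr)$ since $(2\beta-1)\wedge\beta \le \beta$, giving the stated expansion of $y'$. I expect the only genuinely delicate point to be the bookkeeping of the remainder's decay rate — in particular verifying that both $t^{-2\beta}$ and $t^{-\beta-1}$ integrate to the claimed $t^{-(2\beta-1)\wedge\beta}$, and checking that the borderline case $\beta=1$ (where $f(t)=t^{-1/2}$ and $p(t)=\tfrac14 t^{-2}$) is covered by exactly the same computation. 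Everything else is a routine perturbation argument; alternatively one could invoke a Levinson-type asymptotic integration theorem, but the direct Gronwall argument is short enough to carry out by hand.
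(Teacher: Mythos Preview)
Your proof is correct and follows essentially the same approach as the paper: both reduce to the perturbed oscillator $z'' + (1+p)z = 0$ via the substitution $y = fz$, then run a variation-of-constants/Gronwall argument to show $z$ is bounded and extract the asymptotic constants, with the same bookkeeping for $y' = f'z + fz'$. The only cosmetic difference is that the paper writes the Duhamel integral equation for $z$ directly, whereas you phrase it as a first-order system for the amplitudes $(u,v)$; these are equivalent formulations.
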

\begin{proof}
	Let us set, for $t\geq t_0$, $u(t)=f(t)^{-1}y(t)$. We easily check that $u$ satisfies
	\[ u''(t) + u(t) \left[ 1 + h(t)\right]=0,\]
	where $h(t):= \frac{f''(t)}{f(t)} + \frac{f'(t)}{f(t)t^{\beta}}$. Since for all $t \geq t_0$ $$ \frac{f'(t)}{f(t)} = - \frac{1}{2t^\beta},$$ we obtain that $ h(t)= \underset{t\to + \infty}{O}(t^{-(2\beta)\wedge (\beta+1)})$. Following the proof of the method of variation
	of parameters, there exists $a_0,b_0 \in \mathbb{R}$ such that, for any $t \geq t_0$,
	\[ u(t) = a_0 \cos(t) + b_0 \sin(t) - \int_{t_0}^t u(s) h(s) \sin(t-s) \, \dd s.\]
	Using that $h \in L^1((t_0,+\infty))$ since $\beta > \frac{1}{2},$ we obtain by Grönwall's lemma that $u$ is bounded on $[t_0, +\infty)$. We deduce that the functions $s \mapsto u(s)h(s)\cos(s)$ and $s \mapsto u(s) h(s) \sin(s)$ belong to $L^1((t_0, + \infty))$.  Thus, up to changing the constants $a_0$ and $b_0,$ one has, for all $t \geq t_0$,
	\begin{equation}\label{eq: devp_u_proof_edo} u(t) = a_0 \cos(t) + b_0 \sin(t) - \sin(t)\int_{t}^{\infty} u(s) h(s) \cos(s) \, \dd s + \cos(t) \int_{t}^{\infty} u(s) h(s) \sin(s) \, \dd s.\end{equation}
	It follows from the fact that $u$ is bounded that
	\[ u(t) = a_0 \cos(t) + b_0 \sin(t) + \underset{t \to + \infty}{O}\left( \int_{t}^{\infty} \frac{\dd s}{s^{(2\beta) \wedge (\beta+1)}} \right).  \]
	Thus, there exist $a \in \mathbb{R}$ and $\phi \in [0,2\pi)$ such that
	\[ u(t) = a\cos(t+\phi) + \underset{t \to + \infty}{O}(t^{-(2\beta-1)\wedge \beta}). \]
	This proves the asymptotic expansion of $y$. Differentiating \eqref{eq: devp_u_proof_edo} and using that $h(t) = \underset{t\to + \infty}{O}(t^{-(2\beta)\wedge (\beta+1)})$, we prove that
	\[ u'(t) = -a\sin(t+\phi) +  \underset{t \to + \infty}{O}(t^{-(2\beta-1)\wedge \beta}). \]
	Since $u$ is bounded and $f'(t) =\underset{t \to + \infty}{O}(f(t)t^{-\beta}),$ we finally obtain that
	\[y'(t)= f'(t)u(t) + f(t)u'(t) = f(t) u'(t) + \underset{t \to + \infty}{O}(f(t)t^{-\beta}). \]
	This concludes the proof of the asymptotic expansion of $y'$.
\end{proof}
\begin{remark}
	Note that if $\beta=1$, the Bessel functions of the first kind $J_0$ and of the second kind $Y_0$ form a basis of solutions. Their asymptotic expansions can be found in \cite[Chap VII]{WatsontreatisetheoryBessel1944}.
\end{remark}
\begin{lem}\label{eq: resolvante }Pick $\beta \in \left(\frac{1}{2}, +\infty\right)$. There exists a basis of solutions $y_1$ and $y_2$ to \eqref{eq: ode_souscritiq} such that the resolvent matrix $R$ satisfies
	\begin{equation*}
		R_t= f(t)\begin{pmatrix}
			\cos(t) & \sin(t) \\ -\sin(t) & \cos(t)
		\end{pmatrix} +\underset{t\to \infty}{O}\left(f(t)t^{-(2\beta-1)\wedge \beta}\right)=f(t) e^{tA}+\underset{t\to \infty}{O}\left(f(t)t^{-(2\beta-1)\wedge \beta}\right).
	\end{equation*}
	Moreover, its Wronskian $w$ is given for any $t \geq t_0$ by 
	\[w(t) = f(t)^2.\]
\end{lem}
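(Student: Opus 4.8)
The plan is to build the basis $\{y_1,y_2\}$ by prescribing the leading-order behaviour of each solution through \cref{expansion_bessel2}, then to read off the asymptotics of $R_t$ directly, and finally to pin down the Wronskian via Abel's identity.

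First I would attach to each solution its ``leading amplitude vector''. By \cref{expansion_bessel2}, any solution $y$ of \eqref{eq: ode_souscritiq} can be written, for some $a\in\mathbb{R}$ and $\phi\in[0,2\pi)$, as $y(t)=af(t)\cos(t+\phi)+O(f(t)t^{-(2\beta-1)\wedge\beta})$, and expanding $a\cos(t+\phi)=(a\cos\phi)\cos t+(-a\sin\phi)\sin t$ I would set $\Psi(y):=(a\cos\phi,-a\sin\phi)\in\mathbb{R}^2$. This is well defined: if $af(t)\cos(t+\phi)$ and $a'f(t)\cos(t+\phi')$ differ by $O(f(t)t^{-(2\beta-1)\wedge\beta})$, then dividing by $f(t)$ and letting $t\to\infty$ forces the two sinusoids to coincide identically, hence the same amplitude and phase. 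The same uniqueness shows that $\Psi$ is linear on the two-dimensional solution space, since if $y=c_1y_1+c_2y_2$ then the expansion of $y$ must be $c_1$ times that of $y_1$ plus $c_2$ times that of $y_2$ up to an $O(f(t)t^{-(2\beta-1)\wedge\beta})$ error.

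The crux is that $\Psi$ is a bijection, for which it suffices to prove injectivity. If $\Psi(y)=0$ with $y\neq0$, then both $y(t)$ and $y'(t)$ are $O(f(t)t^{-(2\beta-1)\wedge\beta})$; completing $y$ to a basis $\{y,z\}$ and using that $z,z'=O(f(t))$, the Wronskian $w_{y,z}=yz'-y'z$ is then $O(f(t)^2 t^{-(2\beta-1)\wedge\beta})$. On the other hand, Abel's identity for \eqref{eq: ode_souscritiq} gives $w_{y,z}(t)=w_{y,z}(t_0)\exp(-\int_{t_0}^t s^{-\beta}\dd s)$, and a direct computation shows $\exp(-\int_{t_0}^t s^{-\beta}\dd s)=f(t)^2/f(t_0)^2$ both when $\beta=1$ and when $\beta\neq1$; since $w_{y,z}(t_0)\neq0$ this contradicts the decay just obtained. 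Hence $\Psi$ is injective, thus bijective, and I would define $y_1:=\Psi^{-1}(1,0)$ and $y_2:=\Psi^{-1}(0,1)$. By \cref{expansion_bessel2} (with $a_1=1,\phi_1=0$ and $a_2=1,\phi_2=-\tfrac{\pi}{2}$) these satisfy $y_1(t)=f(t)\cos t+O(\cdot)$, $y_1'(t)=-f(t)\sin t+O(\cdot)$, $y_2(t)=f(t)\sin t+O(\cdot)$, $y_2'(t)=f(t)\cos t+O(\cdot)$, all errors being $O(f(t)t^{-(2\beta-1)\wedge\beta})$, and assembling the four expansions into $R_t$ gives exactly $R_t=f(t)e^{tA}+O(f(t)t^{-(2\beta-1)\wedge\beta})$.

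For the Wronskian of this basis, I would invoke Abel once more: $w(t)=w(t_0)f(t)^2/f(t_0)^2=c\,f(t)^2$ with $c:=w(t_0)/f(t_0)^2$ a constant. Substituting the asymptotics of $y_1,y_2,y_1',y_2'$ into $w=y_1y_2'-y_1'y_2$ yields $w(t)=f(t)^2(\cos^2 t+\sin^2 t)+O(f(t)^2 t^{-(2\beta-1)\wedge\beta})=f(t)^2(1+O(t^{-(2\beta-1)\wedge\beta}))$; comparing with $w(t)=c\,f(t)^2$ and letting $t\to\infty$ forces $c=1$, i.e.\ $w(t)=f(t)^2$ for all $t\geq t_0$. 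The main obstacle is the injectivity of $\Psi$ --- equivalently, that a nonzero solution cannot have vanishing leading amplitude vector --- which is exactly where Abel's identity and the precise error rate from \cref{expansion_bessel2} combine to do the work; the rest is bookkeeping on the asymptotic expansions.
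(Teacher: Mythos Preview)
Your proof is correct and follows essentially the same route as the paper's: both combine Abel's identity $w(t)=w_0f(t)^2$ with the asymptotics from \cref{expansion_bessel2} to rule out degeneracy of the leading terms, then normalize to reach the phases $\phi_1=0$, $\phi_2=-\tfrac{\pi}{2}$ and read off $w_0=1$. The only difference is cosmetic --- you package the ``nondegeneracy'' step as injectivity of the linear map $\Psi$, whereas the paper argues directly that $a_1a_2\sin(\phi_2-\phi_1)\neq0$ by comparing the two expressions for the Wronskian.
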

\begin{proof}
	
	It is well-known that the Wronskian satisfies, for all $t \geq t_0$, 
	\[ w'(t) = -\frac{1}{t^{\beta}} w(t).\]
	Thus, there exists $w_0 \in \mathbb{R}\setminus\{0\}$ such that, for all $t \geq t_0$, $w(t)=w_0f(t)^2$. Moreover, thanks to \cref{expansion_bessel2}, for $i\in \{1,2\}$, there exist $a_i\in \mathbb{R}$ and $\phi_i\in [0,2\pi)$ such that
	\begin{equation*}
		y_i(t)=a_if(t)\cos(t+\phi_i)+\underset{t\to \infty}{O}\left(f(t)t^{-(2\beta-1)\wedge \beta}\right)
	\end{equation*}
	and
	\begin{equation*}
		y_i'(t)=-a_if(t)\sin(t+\phi_i)+\underset{t\to \infty}{O}\left(f(t)t^{-(2\beta-1)\wedge \beta}\right).
	\end{equation*}
	As a consequence,
	\begin{equation*}
		w(t)= -a_1a_2f(t)^2 \sin (\phi_2-\phi_1)+\underset{t\to \infty}{O}\left(f(t)^2t^{-(4\beta-2)\wedge 2\beta}\right).
	\end{equation*}
	But since $w(t)=w_0f(t)^2$, it implies that $a_i\neq 0$ and $\phi_2 \not\equiv \phi_1 [\pi]$.\\
	Up to dividing by $a_i$, we can assume that $a_i=1$, and up to considering a linear combination of $y_1$ and $y_2$, we can assume that $\phi_1=0$ and $\phi_2= -\frac{\pi}{2}$. Thus, we have $w_0=1$. This concludes the proof.
\end{proof}

\section{Some technical results}\label{appendix}
We collect here some technical results used in our proofs.
Recall first a sufficient condition for the non-explosion of the solution to a SDE. The proof can be found in \cite{GradinaruAsymptoticbehaviortimeinhomogeneous2021a}.
\begin{lem}\label{explosion} Let $(Y_t)_{t\geq t_0}$ be a càdlàg process, solution to a SDE. For all $n\geq0$, define the stopping time
	\begin{equation}
		\tau_n:=\inf \{ t\geq t_0, \ \absd{Y_t}\geq n\}.
	\end{equation}
	Set $\tau_{\infty}:= \lim_{n\to +\infty}\tau_n$ the explosion time of $Y$.
	Assume that there exist two measurable and non-negative functions $\phi$ and $b$ such that
	\begin{enumerate}[label=(\roman*)]
		\item $\phi$ is non-decreasing and $\lim_{n\to \infty}\phi(n)=+\infty$,
		\item $b$ is finite-valued,
		\item and for all $t\geq t_0$,
		\begin{equation*}\label{borne_explosion}
			\sup_{n\geq 0}\mathbb{E}\left[\phi(\abs{Y_{t\wedge \tau_{n}}})\right] \leq b(t).
		\end{equation*}
	\end{enumerate}
	Then $\tau_{\infty}=+\infty$ a.s.
\end{lem}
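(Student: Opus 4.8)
The plan is to prove that $\mathbb{P}(\tau_\infty \le t) = 0$ for every fixed $t \ge t_0$; letting $t \to +\infty$ afterwards then yields $\tau_\infty = +\infty$ almost surely. The starting point is the elementary but crucial observation that, since $\absd{\cdot}$ is continuous and $[n,+\infty)$ is closed, $\tau_n$ is the hitting time of a closed set by the càdlàg process $Y$; hence, by right-continuity of $t \mapsto \absd{Y_t}$, on the event $\{\tau_n < +\infty\}$ one has $\absd{Y_{\tau_n}} \ge n$. In particular, on $\{\tau_n \le t\}$ we have $t \wedge \tau_n = \tau_n$ and therefore $\absd{Y_{t\wedge\tau_n}} \ge n$.

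Using that $\phi$ is non-negative and non-decreasing, I would then bound, for each $n \ge 0$,
\[
b(t) \ \ge\ \mathbb{E}\left[\phi\bigl(\absd{Y_{t\wedge\tau_n}}\bigr)\right] \ \ge\ \mathbb{E}\left[\phi\bigl(\absd{Y_{t\wedge\tau_n}}\bigr)\mathbb{1}_{\{\tau_n \le t\}}\right] \ \ge\ \phi(n)\,\mathbb{P}(\tau_n \le t),
\]
so that $\mathbb{P}(\tau_n \le t) \le b(t)/\phi(n)$. Since $b(t)$ is finite and $\phi(n) \to +\infty$ as $n \to +\infty$, the right-hand side tends to $0$.

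To conclude, I would use that $(\tau_n)_n$ is non-decreasing with $\tau_\infty = \sup_n \tau_n$, so that $\{\tau_\infty \le t\} \subseteq \{\tau_n \le t\}$ for every $n$; passing to the limit gives $\mathbb{P}(\tau_\infty \le t) \le \lim_{n\to+\infty} b(t)/\phi(n) = 0$. Letting $t \to +\infty$ (say along the integers) then gives $\mathbb{P}(\tau_\infty < +\infty) = 0$, which is the claim. The only mildly delicate point in the argument is the identification $\absd{Y_{\tau_n}} \ge n$ on $\{\tau_n < +\infty\}$, which rests on the càdlàg property together with the closedness of $[n,+\infty)$; apart from that, everything is a one-line Markov-type estimate, and the SDE structure is used only through the fact that $Y$ is càdlàg and adapted, so that the $\tau_n$ are genuine stopping times.
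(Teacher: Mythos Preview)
Your argument is correct and is precisely the standard Markov-inequality route for this kind of non-explosion criterion. The paper does not actually supply its own proof of this lemma; it merely cites \cite{GradinaruAsymptoticbehaviortimeinhomogeneous2021a}, where the same elementary argument is carried out, so there is nothing further to compare.
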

We now state and prove a result on the periodic-averaging phenomenon.
\begin{lem}\label{lemma_average_periodic2} Let us fix $t_0>0$ and $h: [t_0,+\infty) \to \mathbb{R}$ a continuous $m$-periodic function, with $m >0$. Let $g : [t_0,+\infty) \to \mathbb{R}^+$ be a continuously differentiable function which is not integrable on $[t_0,+\infty)$. We assume moreover that \begin{enumerate}[label=(\roman*)]
		\item $g'(t) =\underset{t \to + \infty}{o}(g(t))$,
		\item $g(t)=\underset{t \to + \infty}{o}\left(\int_{t_0}^{t} g(u) \dd u \right)$.
	\end{enumerate}
	
	Then, 
	\[ \int_{t_0}^{t}g(u) h(u)  \dd u  = \left[\frac{1}{m}  \int_{t_0}^{t_0+m} h(u) \dd u \right]\int_{t_0}^{t}g(u)\dd u + \underset{t \to + \infty}{o} \left( \int_{t_0}^{t}g(u)\dd u \right).\]
	Let us remark that the functions $g_1$ and $g_2$ defined for $t \in \mathbb{R}$, $r \geq 0$ and $\beta \in [0,1)$ by $g_1(t):= t^r$ and $g_2(t):= \exp(rt^{1-\beta})$, satisfy the preceding assumptions made on $g$.
\end{lem}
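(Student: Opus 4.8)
The plan is to reduce to the case of a period-mean-zero function and then integrate by parts. Write $\bar h := \frac{1}{m}\int_{t_0}^{t_0+m} h(u)\,\dd u$ and $h_0 := h - \bar h$, so that $h_0$ is continuous, $m$-periodic, and has vanishing integral over one period. Since
\begin{equation*}
\int_{t_0}^{t} g(u) h(u)\,\dd u = \bar h \int_{t_0}^{t} g(u)\,\dd u + \int_{t_0}^{t} g(u) h_0(u)\,\dd u,
\end{equation*}
it suffices to prove that $\int_{t_0}^{t} g(u) h_0(u)\,\dd u = \underset{t \to + \infty}{o}\left(\int_{t_0}^{t} g(u)\,\dd u\right)$.

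First I would introduce the antiderivative $H_0(t) := \int_{t_0}^{t} h_0(u)\,\dd u$. The key elementary observation is that $H_0$ is again $m$-periodic: for every $t$ one has $H_0(t+m) - H_0(t) = \int_{t}^{t+m} h_0 = \int_{t_0}^{t_0+m} h_0 = 0$, using the $m$-periodicity of $h_0$ and the vanishing of its mean over a period. Being continuous and periodic, $H_0$ is bounded on $[t_0,+\infty)$; set $C_0 := \sup_{[t_0,+\infty)} \abs{H_0} < +\infty$. Since $g \in \mathcal{C}^1$ and $H_0 \in \mathcal{C}^1$ with $H_0(t_0)=0$, integration by parts gives
\begin{equation*}
\int_{t_0}^{t} g(u) h_0(u)\,\dd u = g(t) H_0(t) - \int_{t_0}^{t} g'(u) H_0(u)\,\dd u.
\end{equation*}

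For the boundary term, $\abs{g(t) H_0(t)} \le C_0\, g(t) = \underset{t \to + \infty}{o}\left(\int_{t_0}^{t} g\right)$ by hypothesis (ii). For the integral term, fix $\delta>0$; by hypothesis (i) there is $T_\delta \ge t_0$ with $\abs{g'(u)} \le \delta\, g(u)$ for all $u \ge T_\delta$, whence
\begin{equation*}
\left\lvert \int_{t_0}^{t} g'(u) H_0(u)\,\dd u \right\rvert \le C_0 \int_{t_0}^{T_\delta} \abs{g'(u)}\,\dd u + C_0\, \delta \int_{T_\delta}^{t} g(u)\,\dd u \le C_0\, c_\delta + C_0\, \delta \int_{t_0}^{t} g(u)\,\dd u,
\end{equation*}
where $c_\delta := \int_{t_0}^{T_\delta} \abs{g'}$ is finite since $g'$ is continuous on the compact interval $[t_0,T_\delta]$. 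As $g \ge 0$ is not integrable on $[t_0,+\infty)$, we have $\int_{t_0}^{t} g \to +\infty$; dividing the displayed identity by $\int_{t_0}^{t} g$ and letting $t \to +\infty$ and then $\delta \to 0$ yields $\int_{t_0}^{t} g\, h_0 = \underset{t \to + \infty}{o}\left(\int_{t_0}^{t} g\right)$, as required. The last remark, that $g_1(t)=t^r$ and $g_2(t)=\exp(r t^{1-\beta})$ fulfil assumptions (i)--(ii), is a routine verification: the logarithmic derivatives $g_1'/g_1 = r/t$ and $g_2'/g_2 = r(1-\beta)t^{-\beta}$ tend to $0$, and the primitives $\int_{t_0}^t g_i$ dominate $g_i(t)$ in the relevant ranges of the parameters.

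I expect the only genuinely \emph{clever} step to be noticing that the antiderivative of the centered periodic function is itself periodic, hence bounded; once that is in place, the proof is merely an integration by parts together with the bookkeeping using the two smallness hypotheses on $g$ and the non-integrability of $g$.
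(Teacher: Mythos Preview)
Your proof is correct and follows essentially the same approach as the paper: subtract the period-average of $h$, observe that the primitive of the centered function is bounded (the paper states this slightly more tersely, without spelling out the periodicity of the primitive), and then integrate by parts, handling the boundary term via hypothesis (ii) and the integral term via hypothesis (i) together with the divergence of $\int_{t_0}^t g$. The only cosmetic difference is that you choose the primitive with $H_0(t_0)=0$, which absorbs the constant boundary term, and you make the $\eps$--$\delta$ argument for the integral term explicit where the paper leaves it implicit.
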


\begin{proof}
	Let us define $\widetilde{h}:= h - \frac{1}{m}  \int_{t_0}^{t_0+m} h(u) \dd u $, and $\widetilde{H}$ a primitive of $\widetilde{h}$. The function $\widetilde{H}$ is bounded on $[t_0,+\infty)$ since the average of $\widetilde{h}$ on its period is equal to $0$. To prove the lemma, we only need to justify that 
	\[\int_{t_0}^{t}g(u) \widetilde{h}(u)  \dd u = \underset{t \to + \infty}{o}\left(\int_{t_0}^{t} g(u) \dd u \right).\] 
	By integration by parts, we obtain that, for all $t \geq t_0$,
	\[ \int_{t_0}^{t}g(u) \widetilde{h}(u)  \dd u = g(t)\widetilde{H}(t) - g(t_0)\widetilde{H}(t_0) - \int_{t_0}^t g'(u) \widetilde{H}(u) \dd u.\]
	Using the fact that $\widetilde{H}$ is bounded, that $g'(t) = \underset{t \to + \infty}{o}(g(t))$ and that $\int_{t_0}^{\infty} g(u) \dd u = + \infty$, we deduce that 
	\[ \int_{t_0}^t g'(u) \widetilde{H}(u) \dd u = \underset{t \to + \infty}{o}\left(\int_{t_0}^{t} g(u) \dd u \right).\] 
	The conclusion follows from the fact that $ g(t)\widetilde{H}(t) - g(t_0)\widetilde{H}(t_0) = \underset{t \to +\infty}{o}\left(\int_{t_0}^{t} g(u) \dd u \right)$, since we have assumed that $g(t)=\underset{t \to + \infty}{o}\left(\int_{t_0}^{t} g(u) \dd u \right)$ and that $\int_{t_0}^{\infty} g(u) \dd u = + \infty$.
	
\end{proof}

\begin{lem} \label{lemmaintegralasymptoticexpansion}
	Let $f$ be given by \eqref{eq: rateofdecreaseresolvent} for $\beta \in \left( \frac12 ,1 \right]$, and pick $\alpha \in (1,2]$. Define 	
	\begin{equation*}
		k_{\beta, \alpha}:=\begin{cases}
			(1+\alpha/2)^{-1} & \mbox{ if } \beta =1, \\
			\frac{2}{\alpha}  & \mbox{ else.}
		\end{cases}
	\end{equation*} 
	Then for any $t>0$, we have 
	\[ \int_{t_0}^{t/\eps} f(u)^{-\alpha} u^{1-2\beta} \dd u = \underset{\eps \to 0}{O}(f(t/\eps)^{-\alpha}\eps^{\beta -1}),\] 
	and  
	\[ \int_{t_0}^{t/\eps} f(u)^{-\alpha} \dd u = k_{\beta, \alpha} f\left(\frac{t}{\eps}\right)^{-\alpha} \left(\frac{t}{\eps}\right)^{\beta} + \underset{\eps \to 0}{o}(f(t/\eps)^{-\alpha}\eps^{-\beta}).\]
\end{lem}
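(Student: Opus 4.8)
The plan is to reduce both estimates to a single integration by parts, exploiting a clean first-order relation satisfied by $f$. Set $g:=f^{-\alpha}$. A direct computation from \eqref{eq: rateofdecreaseresolvent} shows that, for every $\beta\in\left(\frac12,1\right]$, one has $f'(u)=-\frac12 u^{-\beta}f(u)$ on $(0,+\infty)$ (immediate when $\beta=1$ since $f(u)=u^{-1/2}$, and obtained by differentiating the exponential when $\beta<1$). Hence $g'=\frac{\alpha}{2}u^{-\beta}g$, equivalently
\[ g(u)=\frac{2}{\alpha}u^{\beta}g'(u),\qquad u^{-\beta}g(u)=\frac{2}{\alpha}g'(u),\qquad u^{\beta-1}g(u)=\frac{2}{\alpha}u^{2\beta-1}g'(u). \]
All integrals below run over $[t_0,T]$ with $T:=t/\eps\to+\infty$, and I will freely use that $g$ is positive, increasing and unbounded (so $g(T)=O(T^{1-\beta}g(T))$), and that $f(t/\eps)^{-\alpha}\eps^{-\beta}\to+\infty$, so that boundary constants are negligible against the announced error terms.

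For the first estimate I would use the middle identity: writing $f(u)^{-\alpha}u^{1-2\beta}=\frac{2}{\alpha}u^{1-\beta}g'(u)$ and integrating by parts,
\[ \int_{t_0}^{T}g(u)\,u^{1-2\beta}\dd u=\frac{2}{\alpha}\Big[u^{1-\beta}g(u)\Big]_{t_0}^{T}-\frac{2(1-\beta)}{\alpha}\int_{t_0}^{T}u^{-\beta}g(u)\dd u. \]
By $u^{-\beta}g(u)=\frac{2}{\alpha}g'(u)$, the last integral equals $\frac{2}{\alpha}\big(g(T)-g(t_0)\big)=O(g(T))=O(T^{1-\beta}g(T))$, and the boundary terms are $O(T^{1-\beta}g(T))$ too. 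Hence $\int_{t_0}^{T}g(u)u^{1-2\beta}\dd u=O(T^{1-\beta}g(T))$; substituting $T=t/\eps$, $g=f^{-\alpha}$ and $(t/\eps)^{1-\beta}=t^{1-\beta}\eps^{\beta-1}$ gives exactly $O(f(t/\eps)^{-\alpha}\eps^{\beta-1})$.

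For the second estimate, the case $\beta=1$ is explicit: $\int_{t_0}^{T}f(u)^{-\alpha}\dd u=\int_{t_0}^{T}u^{\alpha/2}\dd u=\frac{1}{1+\alpha/2}T^{1+\alpha/2}+O(1)=k_{1,\alpha}f(T)^{-\alpha}T^{\beta}+o\big(f(t/\eps)^{-\alpha}\eps^{-\beta}\big)$. For $\beta\in\left(\frac12,1\right)$, I would use $g(u)=\frac{2}{\alpha}u^{\beta}g'(u)$ and integrate by parts,
\[ \int_{t_0}^{T}g(u)\dd u=\frac{2}{\alpha}\Big[u^{\beta}g(u)\Big]_{t_0}^{T}-\frac{2\beta}{\alpha}\int_{t_0}^{T}u^{\beta-1}g(u)\dd u, \]
whose leading term is $\frac{2}{\alpha}T^{\beta}g(T)=k_{\beta,\alpha}f(t/\eps)^{-\alpha}(t/\eps)^{\beta}$. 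To bound the remainder, use $u^{\beta-1}g(u)=\frac{2}{\alpha}u^{2\beta-1}g'(u)$ and integrate by parts once more; since $2\beta-1\geq0$, the resulting integral term carries a nonpositive coefficient against a nonnegative integrand, so
\[ 0\leq\int_{t_0}^{T}u^{\beta-1}g(u)\dd u\leq\frac{2}{\alpha}T^{2\beta-1}g(T)=\underset{\eps\to0}{o}\big(T^{\beta}g(T)\big), \]
the last step because $T^{2\beta-1}g(T)/\big(T^{\beta}g(T)\big)=T^{\beta-1}\to0$. Combining, $\int_{t_0}^{T}g(u)\dd u=\frac{2}{\alpha}T^{\beta}g(T)+o(T^{\beta}g(T))$, and substituting $T=t/\eps$ (absorbing the constant $t^{\beta}$ into the error) yields the claim. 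The only delicate point is controlling these remainder integrals; the device that makes it routine is the relation $g'=\frac{\alpha}{2}u^{-\beta}g$, which turns each remainder either into an explicitly integrable derivative or, after one more integration by parts, into a lower-order boundary term handled by a sign argument, so that no infinite regress occurs.
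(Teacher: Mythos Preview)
Your proof is correct and follows essentially the same approach as the paper: both arguments hinge on the identity $g'(u)=\tfrac{\alpha}{2}u^{-\beta}g(u)$ for $g=f^{-\alpha}$ and a single integration by parts. The only small variation is in bounding the remainder $\int_{t_0}^{T}u^{\beta-1}g(u)\,\dd u$ when $\beta<1$: the paper simply observes $u^{\beta-1}g(u)=o(g(u))$ and uses $\int g\to+\infty$ to conclude this integral is $o\!\big(\int g\big)$, whereas you perform one further integration by parts with a sign argument; both handle the remainder in one line, so this is a cosmetic rather than a substantive difference.
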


\begin{proof}
	When $\beta = 1$, the results follow from direct computations because of the expression of $f$. Assume now that $\beta \in \left( \frac12,1 \right)$. For the first point, the integration by parts formula ensures that \begin{align*}
		\int_{t_0}^{t/\eps} f(u)^{-\alpha} u^{1-2\beta} \dd u & = \frac{2}{\alpha} \left[ f(u)^{-\alpha} u^{1-\beta} \right]^{t/\eps}_{t_0} - \frac{2}{\alpha}(1- \beta) \int_{t_0}^{t/\eps} f(u)^{-\alpha} u^{-\beta} \dd u \\ &= \underset{\eps \to 0}{O}(f(t/\eps)^{-\alpha}\eps^{\beta -1}) + \underset{\eps \to 0}{O}(f(t/\eps)^{-\alpha}) \\ &= \underset{\eps \to 0}{O}(f(t/\eps)^{-\alpha}\eps^{\beta -1}).
	\end{align*}
	For the second asymptotic expansion, it follows again from an integration by parts that \begin{align*}
		\int_{t_0}^{t/\eps} f(u)^{-\alpha}  \dd u & = \int_{t_0}^{t/\eps} f(u)^{-\alpha}u^{-\beta} u^{\beta}  \dd u \\ &= \frac{2}{\alpha} \left[ f(u)^{-\alpha} u^{\beta} \right]^{t/\eps}_{t_0} - \frac{2}{\alpha} \beta \int_{t_0}^{t/\eps} f(u)^{-\alpha} u^{\beta-1} \dd u.
	\end{align*}
	Remarking that $f(u)^{-\alpha} u^{\beta-1} = \underset{u \to + \infty}{o}(f(u)^{-\alpha})$, since $\beta  <1$, we deduce that \begin{align*}\int_{t_0}^{t/\eps} f(u)^{-\alpha} u^{\beta-1} \dd u &= \underset{\eps \to 0}{o} \left( \int_{t_0}^{t/\eps} f(u)^{-\alpha}\dd u\right). \end{align*} We obtain that 
	\[ \int_{t_0}^{t/\eps} f(u)^{-\alpha}\dd u \underset{\eps \to 0}{\sim} \frac2\alpha f\left(\frac{t}{\eps}\right)^{-\alpha} \left(\frac{t}{\eps}\right)^{\beta}  \] 
	This ends the proof.
\end{proof}
\begin{lem}\label{Lemma_oscillating_gaussian_convergence}
	Let $(X_n)_n$ be a sequence of random variables with values in $\mathbb{R}^2$, and which converges in distribution to a random variable $X$. We assume that the distribution of $X$ is invariant under rotations, i.e.\ for any orthogonal matrix $R \in \mathcal{M}_2(\mathbb{R})$, the random variables $X$ and $RX$ have the same distribution. Then for all sequence $(R_n)_n$ of orthogonal matrices in $\mathcal{M}_2(\mathbb{R})$, we have 
	\[ R_n X_n \quad \underset{n\to +\infty}{\Longrightarrow}\quad X.\]
\end{lem}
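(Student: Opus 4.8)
The plan is to combine the compactness of the orthogonal group $O(2)$ with a subsequence argument. First I would recall two standard facts: since $X_n \Longrightarrow X$, the family $(X_n)_n$ is tight (Prokhorov's theorem, see \cite{BillingsleyConvergenceprobabilitymeasures1999}); and a sequence of $\mathbb{R}^2$-valued random variables converges in distribution to $X$ as soon as every one of its subsequences admits a further subsequence converging in distribution to $X$. Thanks to the latter, it suffices to fix an arbitrary subsequence $(R_{n_k} X_{n_k})_k$ of $(R_n X_n)_n$ and to extract from it a sub-subsequence converging in distribution to $X$.

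Given such a subsequence, the matrices $(R_{n_k})_k$ all lie in the compact set $O(2)$, hence along a further subsequence $R_{n_{k_j}} \to R$ for some $R \in O(2)$. I would then show $R_{n_{k_j}} X_{n_{k_j}} \Longrightarrow R X$ by writing $R_{n_{k_j}} X_{n_{k_j}} = R X_{n_{k_j}} + (R_{n_{k_j}} - R) X_{n_{k_j}}$. The continuous mapping theorem gives $R X_{n_{k_j}} \Longrightarrow R X$, while the remainder tends to $0$ in probability: for $\delta>0$ and $\eta>0$, tightness provides $M>0$ with $\sup_n \mathbb{P}(\absd{X_n} > M) \le \eta$, so that $\mathbb{P}\!\left(\absd{(R_{n_{k_j}}-R) X_{n_{k_j}}} > \delta\right) \le \eta + \mathbb{P}\!\left(\absd{R_{n_{k_j}}-R}_{\mathrm{op}} > \delta/M\right)$, and the last term vanishes for $j$ large since $\absd{R_{n_{k_j}}-R}_{\mathrm{op}} \to 0$. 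Slutsky's lemma then yields $R_{n_{k_j}} X_{n_{k_j}} \Longrightarrow R X$.

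Finally, the rotational-invariance hypothesis says exactly that $R X$ has the same law as $X$ for every orthogonal matrix $R$, hence $R_{n_{k_j}} X_{n_{k_j}} \Longrightarrow X$. Since the initial subsequence was arbitrary, the subsequence criterion gives $R_n X_n \Longrightarrow X$. I do not expect a genuine obstacle here; the only points needing care are the extraction of a convergent subsequence of matrices — handled by compactness of $O(2)$ — and the observation that the hypothesis is stated for all orthogonal matrices, so reflections are covered and no parity issue arises.
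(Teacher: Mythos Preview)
Your argument is correct. The subsequence criterion, compactness of $O(2)$, the tightness-plus-Slutsky decomposition, and the final appeal to rotational invariance all work as you describe.

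The paper, however, takes a different and somewhat shorter route via characteristic functions. Writing $Y_n:=R_nX_n$, one has $\phi_{Y_n}(\xi)=\phi_{X_n}(R_n^{-1}\xi)$, and by rotational invariance $\phi_X(R_n^{-1}\xi)=\phi_X(\xi)$ for every $n$. Hence
\[
|\phi_{Y_n}(\xi)-\phi_X(\xi)|=|\phi_{X_n}(R_n^{-1}\xi)-\phi_X(R_n^{-1}\xi)|\le \sup_{\|z\|=\|\xi\|}|\phi_{X_n}(z)-\phi_X(z)|,
\]
and the right-hand side tends to $0$ because $X_n\Rightarrow X$ implies \emph{uniform} convergence of $\phi_{X_n}$ to $\phi_X$ on compact sets (see \citer[Theorem 5.3 p.~86]{KallenbergFoundationsModernProbability2002}). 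This avoids any subsequence extraction, any use of tightness, and any Slutsky-type estimate: the whole proof is a one-line inequality once the uniform-convergence fact is invoked. Your approach, by contrast, is purely ``soft'' probabilistic reasoning that never mentions characteristic functions; it would generalise immediately to any compact group of linear maps acting on $\mathbb{R}^d$, and it makes transparent which ingredients are really needed (tightness of $(X_n)$ and compactness of the acting group). Both proofs are short; the paper's is more direct here, while yours is arguably more robust.
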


\begin{proof}
	Let us denote by $\phi_Z$  the characteristic function of a random variable $Z$.  Using \citer[Theorem 5.3 p. 86]{KallenbergFoundationsModernProbability2002}, we know that $(\phi_{X_n})_n$ converges to $\phi_X$ uniformly on every compact subset of $\mathbb{R}^2$.
	The characteristic function of the random variable $Y_n:=R_nX_n$ is given by 
	\[ \xi \mapsto \phi_{Y_n}(\xi)=\phi_{X_n}(R_n\xi).\]
	Thus, by assumption, we have, for all $\xi \in \mathbb{R}^2$, 
	\[ \phi_{X}(R_n\xi) = \phi_{X}(\xi).\] 
	It follows that, for any $\xi \in \mathbb{R}^2$ and $n \geq 0$,
	\[\begin{aligned}\left\vert \phi_{Y_n}(\xi) - \phi(\xi) \right\vert = \left\vert\phi_{X_n}(R_n \xi) - \phi_X(R_n\xi) \right\vert \leq \sup_{z \in \mathbb{R}^2,  \Vert z \Vert= \Vert \xi \Vert} \left\vert \phi_{X_n}(z) - \phi_X(z) \right\vert, \end{aligned}\] 
	which converges to $0$, as $n \to + \infty$. This ends the proof of the lemma.
\end{proof} 

{\bf Acknowledgements}
The authors would like to thank Paul-Eric Chaudru de Raynal and Mihai Gradinaru for their supervision and advices.
The authors would also like to thank Jürgen Angst for having suggested this subject and useful discussions.
\bibliographystyle{abbrv}
\bibliography{article_confined}

\begin{thebibliography}{10}

\bibitem{AlbeverioLongtimebehavior1994}
S.~Albeverio and A.~Klar.
\newblock Long time behavior of nonlinear stochastic oscillators: {{The}}
  one-dimensional {{Hamiltonian}} case.
\newblock {\em J. Math. Phys.}, 35(8):4005--4027, Aug. 1994.

\bibitem{ApplebaumLevyprocessesstochastic2009}
D.~Applebaum.
\newblock {\em Levy Processes and Stochastic Calculus}.
\newblock Cambridge {{Studies}} in {{Advanced Mathematics}}. {Cambridge
  University Press}, second edition, 2009.

\bibitem{BillingsleyConvergenceprobabilitymeasures1999}
P.~Billingsley.
\newblock {\em Convergence of Probability Measures}.
\newblock Wiley Series in Probability and Statistics. {{Probability}} and
  Statistics Section. {Wiley}, 2nd ed edition, 1999.

\bibitem{RaynalStrongexistenceuniqueness2017}
P.~E.~C. de~Raynal.
\newblock Strong existence and uniqueness for degenerate {{SDE}} with
  {{H\"older}} drift.
\newblock {\em Annales de l'Institut Henri Poincar\'e, Probabilit\'es et
  Statistiques}, 53(1):259--286, Feb. 2017.

\bibitem{DitlevsenObservationastablenoise1999}
P.~D. Ditlevsen.
\newblock Observation of {$\alpha$}-stable noise induced millennial climate
  changes from an ice-core record.
\newblock {\em Geophysical Research Letters}, 26:1441--1444, 1999.

\bibitem{FedrizziRegularitystochastickinetic2017}
E.~Fedrizzi, F.~Flandoli, E.~Priola, and J.~Vovelle.
\newblock Regularity of stochastic kinetic equations.
\newblock {\em Electron. J. Probab.}, 22, 2017.

\bibitem{FournierOnedimensionalcritical2021}
N.~Fournier and C.~Tardif.
\newblock One dimensional critical {{Kinetic Fokker-Planck}} equations,
  {{Bessel}} and stable processes.
\newblock {\em Communications in Mathematical Physics}, 381, Jan. 2021.

\bibitem{GradinaruAsymptoticbehaviortimeinhomogeneous2021a}
M.~Gradinaru and E.~Luirard.
\newblock Asymptotic behavior for a time-inhomogeneous {{Kolmogorov}} type
  diffusion, July 2021.

\bibitem{GradinaruAsymptoticbehaviortimeinhomogeneous2021}
M.~Gradinaru and E.~Luirard.
\newblock Asymptotic behavior for a time-inhomogeneous stochastic differential
  equation driven by an {$\alpha$}-stable {{L\'evy}} process, Dec. 2021.

\bibitem{GradinaruExistenceasymptoticbehaviour2013}
M.~Gradinaru and Y.~Offret.
\newblock Existence and asymptotic behaviour of some time-inhomogeneous
  diffusions.
\newblock {\em Ann. Inst. H. Poincar\'e Probab. Statist.}, 49(1):182--207, Feb.
  2013.

\bibitem{HonoreStrongregularizationBrownian2018}
I.~Honore, S.~Menozzi, and P.-E. {Chaudru de Raynal}.
\newblock Strong regularization by {{Brownian}} noise propagating through a
  weak {{H\"ormander}} structure, Oct. 2018.

\bibitem{Jourdain_fractional_diffusion}
B.~Jourdain, S.~Méléard, and W.~A. Woyczynski.
\newblock {A Probabilistic Approach for Nonlinear Equations Involving the
  Fractional Laplacian and a Singular Operator}.
\newblock {\em Potential Analysis}, 23:55--81, 2005.

\bibitem{KallenbergFoundationsModernProbability2002}
O.~Kallenberg.
\newblock {\em Foundations of {{Modern Probability}}}.
\newblock Probability and {{Its Applications}}. {Springer New York}, {New York,
  NY}, 2002.

\bibitem{KaratzasBrownianMotionStochastic1998}
I.~Karatzas and S.~Shreve.
\newblock {\em Brownian {{Motion}} and {{Stochastic Calculus}}}.
\newblock Graduate {{Texts}} in {{Mathematics}}. {Springer-Verlag}, {New York},
  second edition, 1998.

\bibitem{Langevintheoriemouvementbrownien1908}
P.~Langevin.
\newblock {Sur la th\'eorie du mouvement brownien}.
\newblock {\em Comptes-Rendus de l'Acad\'emie des Sciences}, 146:530--532, Jan.
  1908.

\bibitem{luschgy:hal-00085213}
H.~Luschgy and G.~Pag{\`e}s.
\newblock {Moment estimates for L{\'e}vy Processes}.
\newblock {\em {Electronic Journal of Probability}}, 13:422--434, 2008.

\bibitem{Mann_fractal_fractional_diffusions}
J.~A. Mann~Jr. and W.~A. Woyczynski.
\newblock {Growing fractal interfaces in the presence of self-similar hopping
  surface diffusion}.
\newblock {\em Physica A: Statistical Mechanics and its Applications},
  291:159--183, 2001.

\bibitem{MAO2011147}
X.~Mao.
\newblock {\em Stochastic Differential Equations and Applications}.
\newblock Woodhead Publishing, second edition edition, 2011.

\bibitem{MarinoWeakwellposednessdegenerate2021}
L.~Marino and S.~Menozzi.
\newblock Weak well-posedness for degenerate {{SDEs}} driven by {{L\'evy}}
  processes, Dec. 2021.

\bibitem{ProtterStochasticIntegrationDifferential2005}
P.~E. Protter.
\newblock {\em Stochastic Integration and Differential Equations}.
\newblock Sto\-chastic Modelling and Applied Probability. Springer-Verlag,
  Berlin Heidelberg, second edition, 2005.

\bibitem{SatoLevyprocessesinfinitely1999}
K.-i. Sato.
\newblock {\em L\'evy Processes and Infinitely Divisible Distributions}.
\newblock Number~68 in Cambridge Studies in Advanced Mathematics. {Cambridge
  University Press}, {Cambridge, U.K. ; New York}, 1999.

\bibitem{SituTheoryStochasticDifferential2005}
R.~Situ.
\newblock {\em Theory of {{Stochastic Differential Equations}} with {{Jumps}}
  and {{Applications}}: {{Mathematical}} and {{Analytical Techniques}} with
  {{Applications}} to {{Engineering}}}.
\newblock Mathematical and {{Analytical Techniques}} with {{Applications}} to
  {{Engineering}}. {Springer US}, 2005.

\bibitem{WangDegenerateSDEsHilbert2015}
F.-Y. Wang and X.~Zhang.
\newblock Degenerate {{SDEs}} in {{Hilbert Spaces}} with {{Rough Drifts}}, Jan.
  2015.

\bibitem{WatsontreatisetheoryBessel1944}
G.~N. Watson.
\newblock {\em A Treatise on the Theory of {{Bessel}} Functions}.
\newblock {Cambrige University Press}, second edition, 1944.

\bibitem{WuLargemoderatedeviations2001}
L.~Wu.
\newblock Large and moderate deviations and exponential convergence for
  stochastic damping {{Hamiltonian}} systems.
\newblock {\em Stochastic Processes and their Applications}, 91(2):205--238,
  Feb. 2001.

\bibitem{ZhangDensitiesSDEsDriven2014}
X.~Zhang.
\newblock Densities for sdes driven by degenerate $\alpha$-stable processes.
\newblock {\em The Annals of Probability}, 42(5):1885--1910, Sept. 2014.

\bibitem{ZhangStochasticHamiltonianflows2016}
X.~Zhang.
\newblock Stochastic {{Hamiltonian}} flows with singular coefficients, June
  2016.

\end{thebibliography}
\end{document}